\documentclass[a4paper,twoside,11pt]{article}

\usepackage{amsfonts}
\usepackage{mathrsfs}
\usepackage{amsmath}
\usepackage{amsmath,graphics}
\usepackage{amssymb}
\usepackage{indentfirst,latexsym,bm,amsmath,pstricks,amssymb,amsthm,graphicx,fancyhdr,float}
\usepackage[all]{xy}
\usepackage{amsthm}
\usepackage{amscd}
\usepackage[colorlinks,urlcolor=black]{hyperref}
\usepackage[center]{titlesec}
\titleformat{\section}{\centering\Large\bfseries}{\arabic{section}.}{1em}{}
\titleformat{\subsection}{\centering\large\bfseries}{\arabic{section}.\arabic{subsection}.}{1em}{}

\setlength{\headheight}{8pt}
\setlength{\parskip}{0.05cm}
\setlength{\textheight}{22.3cm} \setlength{\textwidth}{15.2cm}
\setlength{\oddsidemargin}{0.4cm} \setlength{\evensidemargin}{0.4cm}
\setlength{\topmargin}{0cm} \setlength{\unitlength}{1mm}
\begin{document}
\theoremstyle{plain}
\newtheorem{thm}{Theorem}
\newtheorem{cor}[thm]{Corollary}
\newtheorem{theorem}{Theorem}[section]
\newtheorem{prop}[theorem]{Proposition}
\newtheorem{lemma}[theorem]{Lemma}
\newtheorem{corollary}[theorem]{Corollary}
\newtheorem{proposition}[theorem]{Proposition}
\newtheorem{conjecture}[theorem]{Conjecture}
\newtheorem{problem}[theorem]{Problem}
\theoremstyle{definition}
\newtheorem{remark}[theorem]{Remark}
\newtheorem{remarks}[theorem]{Remarks}
\newtheorem{definition}[theorem]{Definition}
\newtheorem{claim}{Claim}[theorem]
\newtheorem{example}[theorem]{Example}
\newtheorem{news}[theorem]{}

\numberwithin{equation}{section}
\def\wt{\widetilde}
\def\ol{\overline}
\def\ra{\rightarrow}
\def\lra{\longrightarrow}
\def\div{\text{\rm{div\,}}}
\def\Div{\text{\rm{Div\,}}}
\def\Aut{\text{\rm{Aut\,}}}
\def\({$($}
\def\){$)$}
\def\chit{\chi_{\rm top}}
\def\bbp{\mathbb P}
\def\calh{\mathcal H}
\def\cala{\mathcal A}
\def\calt{\mathcal T}
\def\calm{\mathcal M}
\def\caln{\mathcal N}
\def\cals{\mathcal S}
\def\calf{\mathcal F}
\def\Pic{\text{{\rm Pic\,}}}
\def\rank{\text{{\rm rank\,}}}

\font\eightrm=cmr10 at 8pt \font\bigrm=cmr10 scaled1440
\font\midrm=cmr10 scaled1200 \font\ninerm=cmr10 at 9pt
\font\ninebf=cmbx10 at 9pt \font\bbf=msbm10 \font\bigbf=cmbx10
scaled1440 \font\Bigbf=cmbx10 scaled2073 \font\midbf=cmbx10
scaled1200 \font\eightit=cmti8 \font\nineit=cmti9
\renewcommand{\labelenumi}{$($\arabic{enumi}$)$}
\renewcommand{\thethm}{\Alph{thm}}
\renewcommand{\theequation}{\arabic{section}-\arabic{equation}}
\renewcommand{\labelenumii}{(\Alph{enumi})}
\pagestyle{fancy}
\setlength{\headsep}{0.4cm}
\fancyhf{} 
\chead{{\bf On Shimura curves in the Torelli locus of curves}
        \hfill {{\bf X. Lu} \& {\bf K. Zuo}}}
\cfoot{---\thepage---}

\title{On Shimura curves in the Torelli locus of curves}
\author{Xin Lu\qquad Kang Zuo}
\date{}
\renewcommand{\thefootnote}{}
\footnotetext{This work is supported by SFB/Transregio 45 Periods, Moduli Spaces and Arithmetic of Algebraic Varieties of the DFG (Deutsche Forschungsgemeinschaft),
and partially supported by National Key Basic Research Program of China (Grant No. 2013CB834202).}

\maketitle

\phantomsection
\begin{abstract}
\addcontentsline{toc}{section}{Abstract}
Oort has conjectured that there do not exist Shimura curves
lying generically in the Torelli locus of curves of genus $g \geq 8$.
We show that there do not exist one-dimensional Shimura families of semi-stable curves of genus $g\geq 5$
of Mumford type. We also show that there do not exist
Shimura curves lying generically in the Torelli locus of hyperelliptic curves of genus $g\geq 8$.
The first result proves a slightly weaker form of the
conjecture for the case of Shimura curves of Mumford type.
The second result proves the conjecture for the Torelli locus of hyperelliptic curves.
We also present examples of Shimura curves contained
generically in the Torelli locus of curves of genus $3$ and $4$.
\end{abstract}

\section{Introduction}\label{sectionintroduction}
Let $\calm_g$ be the moduli space of smooth projective curves of genus $g\geq 2$ over $\mathbb C$,
and $\cala_g$ be the moduli space of $g$-dimensional principally polarized  abelian varieties over $\mathbb C$.
There is a natural morphism called the {\it Torelli morphism},
$$j:\,\calm_g\to \cala_g,$$
which sends a curve to its
canonically principally polarized Jacobian.
The image $\calt_g^{\rm o}$ is called the {\it open Torelli locus},
and its Zariski closure $\calt_g \subseteq \cala_g$ is called the {\it Torelli locus}.

According to a conjecture of Coleman, for a fixed genus $g\geq 4$, there are only finitely many
CM-points in $\calm_g$.
This conjecture is known to be false for $4\leq g\leq 7$,
by the fact that there exist Shimura subvarieties $Z$
of positive dimension {\it contained generically in the Torelli locus},
i.e.,
$$Z \subseteq \calt_g \text{\quad and \quad} Z\cap \calt_g^{\rm o}\neq \emptyset.$$
We refer to \cite{mo11} for a beautiful discussion of this topic.
Combining with the conjecture of Andr\'e-Oort, which says that a Shimura variety is
characterized by having a dense subset of CM-points,
one has the following expectation (cf. \cite[\S\,5]{oort97}, see also \cite[\S\,4]{mo11}):

\begin{conjecture}[Oort]\label{conjectureShimura}
For large $g$ (in any case $g\geq 8$), there does not exist a Shimura subvariety
of positive dimension contained generically in the Torelli locus.
\end{conjecture}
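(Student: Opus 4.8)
The plan is to study the polarized variation of Hodge structure (VHS) carried by a putative positive-dimensional Shimura subvariety $Z\subseteq \calt_g$ and to extract a contradiction between the rigidity forced by the Shimura condition and the geometric constraints imposed by the multiplication map on the canonical ring of a curve. First I would reduce to a clean geometric situation. Since $Z\cap\calt_g^{\rm o}\neq\emptyset$, over a dense open subset of $Z$ the corresponding principally polarized abelian varieties are Jacobians, so after replacing $Z$ by a finite cover and adding a level structure a good toroidal compactification $\ol Z$ carries a semistable family of curves $f\colon S\to \ol Z$ with $R^1f_*\mathbb C$ underlying the restriction of the universal VHS on $\cala_g$. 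Working with the associated logarithmic Higgs bundle $(E^{1,0}\oplus E^{0,1},\theta)$, where $E^{1,0}=f_*\omega_{S/\ol Z}$ and $\theta\colon E^{1,0}\to E^{0,1}\otimes\Omega^1_{\ol Z}(\log\Delta)$, I would invoke Simpson's correspondence to split off the maximal unitary (flat) subsheaf and write $E^{1,0}=A\oplus U$ with $\theta|_U=0$ and $\theta|_A$ injective.

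The Shimura condition enters through maximality. A subvariety of $\cala_g$ is of Shimura type precisely when, on the non-unitary part, the Higgs field meets the Arakelov inequality with equality and the period map is totally geodesic. The plan is to use the Viehweg--Zuo characterization of such loci so that $\theta|_A$ is strictly maximal, i.e. an isomorphism onto a subsheaf of $A'\otimes\Omega^1_{\ol Z}(\log\Delta)$ for a suitable summand $A'$ of $E^{0,1}$, with the curvature eigenvalues rigidly prescribed by the underlying Shimura datum attached to $\mathrm{Sp}_{2g}$. This pins down the rank $r=\rank A$ and forces $\theta$ to remain of maximal rank along every tangent direction of $Z$.

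The geometry of curves enters through the factorization of the Higgs field. Restricted to any tangent vector of $Z$, $\theta$ is cup product with a Kodaira--Spencer class $\rho\in H^1(C,T_C)$, and the induced map $H^0(C,\omega_C)\to H^1(C,\mathcal O_C)$ is, by Serre duality, controlled by the multiplication map $\mathrm{Sym}^2 H^0(C,\omega_C)\to H^0(C,\omega_C^{\otimes 2})$. For a non-hyperelliptic curve this map is surjective (Max Noether), so its kernel, the space of quadrics through the canonical curve, has dimension $\binom{g-2}{2}$. These quadrics are exactly the relations that prevent the composite Higgs field from being of maximal rank in independent cotangent directions once $g$ is large; in the higher-dimensional setting the second fundamental form of the Torelli embedding becomes a system of such quadrics parametrized by the conormal directions of $Z$. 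The plan is to use its rank stratification, together with the higher-dimensional Arakelov inequalities of Viehweg--Zuo, to bound $\rank A$ (equivalently to bound $\deg A$ along a complete-intersection curve in $\ol Z$ via a Cornalba--Harris/Xiao-type slope estimate), and to contradict the strict maximality of the previous paragraph for all $g\ge 8$.

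The hardest part will be making the geometric obstruction uniform across all the special strata of $\calt_g$. For special curves the multiplication map degenerates: hyperelliptic curves (handled separately, as announced in the abstract), but also trigonal curves, plane curves, and curves with extra automorphisms give larger kernels, and $Z$ could a priori be forced into such a stratum where the naive corank bound weakens; moreover, when $\dim Z>1$ one must control the full conormal quadric system rather than a single direction. Making these estimates compatible amounts to bounding the growth of the Clifford index, equivalently the gonality, along $Z$ and ruling out that a Shimura subvariety sits generically inside the locus of curves of small gonality. Because no single inequality is yet known to dominate every stratum uniformly for all $g$, this is exactly the gap that keeps the full conjecture open; my proposal reduces it to a uniform lower bound on the gonality of the generic curve of $Z$ together with a finite list of low-gonality strata to be excluded directly by the slope method.
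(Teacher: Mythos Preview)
The statement you are attempting to prove is stated in the paper as a \emph{conjecture}, not a theorem; the paper does not claim a proof of it and none is given. What the paper does prove are two special cases: Theorem~\ref{mainthm1} (no Shimura family of semi-stable curves with strictly maximal Higgs field for $g\ge 5$) and Theorem~\ref{mainthm} (no Kuga curve generically in the hyperelliptic Torelli locus for $g\ge 8$). Both results are for one-dimensional $Z$ only, and both impose a further hypothesis (strictly maximal Higgs field, or hyperelliptic) that your proposal does not. So there is nothing in the paper to compare your argument against as a proof of the full conjecture.

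Your write-up is candid about this: you explicitly identify the missing uniform gonality bound as ``exactly the gap that keeps the full conjecture open.'' That is accurate, and it means what you have written is a research outline rather than a proof. The concrete gap is that the passage from ``$\theta|_A$ strictly maximal'' to a contradiction via the quadric/multiplication-map argument is not carried out; you only assert that the corank of $\mathrm{Sym}^2 H^0(\omega_C)\to H^0(\omega_C^{\otimes 2})$ should obstruct maximality once $g$ is large, without producing the inequality that would force this, and you acknowledge that the obstruction degenerates along the very strata (low gonality, extra automorphisms) where a Shimura subvariety might hide.

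It may still be useful to contrast your outline with the paper's method in the cases it does settle. The paper works entirely with a one-dimensional base and never touches the multiplication map or the canonical-curve quadrics. Instead it sandwiches the numerical invariant $\omega_{S/B}^2$ between a lower bound coming from Moriwaki's sharp slope inequality (or, in the hyperelliptic case, the Cornalba--Harris formula refined by the fibred structure of the hyperelliptic double cover when $q_f>0$) and an upper bound coming from a Miyaoka--Yau inequality applied after a carefully chosen base change. The Arakelov \emph{equality} forced by the Kuga/Shimura condition then makes these two bounds incompatible for large $g$. This is quite different in spirit from your Hodge-theoretic/second-fundamental-form approach: the paper's argument is surface-theoretic and numerical, trading the flexibility of higher-dimensional $Z$ for sharp control of $\delta_i(\Upsilon)$ and $\xi_j(\Upsilon)$ on a curve. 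Neither approach currently yields the full conjecture.
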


We study here the problem for the case of Shimura curves,
and in general Kuga curves.
Recall that a closed subvariety
$$Z\hookrightarrow \cala_g = Sp_{2g}(\mathbb Z)\backslash Sp_{2g}(\mathbb R)/ U(g)$$
is called a {\it Kuga subvariety} if the inclusion is induced by a homomorphism
$G \to Sp_{2g}$ for some algebraic group $G$ (cf. \cite{mumford69}).
Moreover $Z$ is a {\it Shimura subvariety} if $Z$ is Kuga and contains a CM-point.
A {\it Kuga} (resp. {\it Shimura{\rm)} curve} $U$ is a one-dimensional
connected Kuga (resp. Shimura) variety.
The corresponding universe family of abelian varieties over a Kuga (resp. Shimura) curve
is called a {\it Kuga} (resp. {\it Shimura{\rm)} family of abelian varieties}.

Let $h:\, X \to B$ be a semi-stable family of $g$-dimensional abelian varieties over a smooth
projective curve $B$, with singular fibres $\Upsilon_{nc}\,/\,\Delta_{nc}$.
Let $U:=B-\Delta_{nc}$ and $V:=h^{-1}(U)$. Then $h:\,V\to U$ is an abelian scheme and
the direct image sheaf $R^1h_*\mathbb C_{V}$ is a local system on $U$
which underlies a variation of polarized Hodge
structure $\mathbb V$ of weight one.
Let $$\left(E^{1,0}\oplus E^{0,1},~\theta\right):
=\left( h_*\Omega^1_{X/B}(\log\Upsilon_{nc})\oplus R^1h_*\mathcal O_X,\,\theta\right)$$
denote  the logarithmic Higgs bundle by taking the grading of the
Hodge filtration on the Deligne's extension of
the de Rham bundle $(R^1h_*\mathbb C_{V}\otimes\mathcal O_{U}, \nabla),$  where
the Higgs field $\theta$ is given by the edge morphism of the tautological sequence
\begin{equation*}\label{defoftheta}
0\lra f^*\Omega^1_B(\log \Delta_{nc}) \lra \Omega^1_X(\log \Upsilon_{nc})
\lra \Omega^1_{X/B}(\log \Upsilon_{nc})\lra 0.
\end{equation*}
By \cite{fujita78} or \cite{kollar87}, $E$ is decomposed as a direct sum
$$\left(E,\theta\right)
=\left(A^{1,0}\oplus A^{0,1},~\theta|_{A^{1,0}}\right)\oplus \left(F^{1,0}\oplus F^{0,1},~ 0\right) $$
of Higgs bundles, where   $A^{1,0}$ is an ample vector bundle,
$F^{1,0}$ and $F^{0,1}$ are vector bundles  underlying  unitary
local subsystems  $\mathbb F^{1,0}\oplus \mathbb F^{0,1}\subset \mathbb V.$

Following \cite{vz03}, the Higgs field is called  to be {\it maximal} if
$$\theta|_{A^{1,0}}:\,A^{1,0} \to A^{0,1}\otimes \Omega^1_B(\log \Delta_{nc})$$
is an isomorphism, and {\it strictly maximal} if furthermore $F=0$.

The Arakelov inequality (cf. \cite{faltings83} or \cite{jz02}) says that
$$\deg A^{1,0}\leq \frac{\rank A^{1,0}}2\cdot\deg\Omega^1_{B}(\log\Delta_{nc}).$$
We say that the family of abelian varieties $X\to B$ {\it reaches the Arakelov bound} if
the above inequality becomes an equality.
It is shown in \cite{vz03} that this property is equivalent to the maximality of the Higgs field for $A$,
i.e., $\theta|_{A^{1,0}}$ is an isomorphism.

It is proved in \cite{vz04} by Viehweg and the second author that if $h: V\to U$
has a strictly maximal Higgs field, then $h:\, V\to U$ is a universal family over
a {\it Shimura curve of Mumford type},
which means that if $\Delta_{nc}\neq\emptyset$, then
$V$ is isogenous over $U$ to a $g$-tuple self-product of
a universal family of elliptic curves;
if $\Delta_{nc}=\emptyset$, then $h$ is derived from the corestriction of a quaternion division algebra
over a totally real number field with all infinite places ramified except one
(see \cite{vz04} for more details). Moreover, M\"{o}ller showed in \cite{moller11}
that the converse also holds. In general,
it is showed in \cite{mvz12} that $h: V\to U$ is a Kuga family if and only if it has a maximal Higgs field.

In \cite{hain99}, Hain studied locally symmetric families of compact Jacobians
satisfying some additional conditions. Based on his
methods, de Jong and Zhang (\cite{djongzh07}) proved that certain
Shimura subvarieties parameterizing abelian varieties with real multiplication
do not lie generically in $\calt_g$ for $g\geq 4$.
In \cite{djongnoot89},
de Jong and Noot developed a method based on a criterion due to  Dwork-Ogus
using $p-$adic Hodge theory (\cite{do86}) and proved that the base varieties of  some specific
universal families  of curves arising from cyclic covers of $\mathbb P^1$ are not contained generically in
$\calt_g$.
Extending this to some general case, recently  Moonen (\cite{moonen10}) proved
that there are exactly twenty  families of curves coming from cyclic
covers of $\mathbb P^1$ such that the base varieties lie generically in $\calt_g$ with $g\leq 7$,
which implies that Conjecture\,\ref{conjectureShimura} holds if
the corresponding families arising from a universe cyclic cover of $\bbp^1$.
In \cite{kukulies10}, Kukulies showed that a given rational Shimura curve with strictly maximal
Higgs field in $\cala_g$ cannot be contained generically in $\calt_g$ for $g$ sufficiently large.
\vspace{0.15cm}

Our first result is to exclude certain Shimura curves arising from families of curves
with strictly maximal Higgs field.
We prove an effective bound on the genus $g$ for which there exists a Shimura
family of curves of genus $g$ with strictly maximal Higgs field.

Let $f: S\to B$ be a family of semi-stable curves over a smooth projective curve $B$
and let $\Delta_{nc}\subset B$ denote those points corresponding to fibres of $f$
with non-compact Jacobians. Put $U=B\setminus \Delta_{nc}$ and $S_0:=f^{-1}(U)$.
Then the relative Jacobian $$jac(f):~ Jac(S_0/U)\to U$$ is an abelian scheme over $U$. We call
$f$ to be a {\it Kuga} (resp. {\it Shimura{\rm)} family of curves},
if $jac(f)$ is a Kuga (resp. Shimura) family of abelian varieties.
The family $f$ is called to be with strictly maximal Higgs field,
if the Higgs field associated to $jac(f)$ is strictly maximal,
or equivalently if $jac(f)$
is a universal family over a Shimura curve $U$ of Mumford type by \cite{vz04}.
\begin{theorem}\label{mainthm1}
For $g\geq 5$,
there does not exist a Shimura family $f: S\to B$
of genus-$g$ curves with strictly maximal Higgs field.
\end{theorem}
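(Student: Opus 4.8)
The plan is to turn the hypothesis, via the structure theorem of Viehweg and the second author, into an Arakelov equality for $jac(f)$ together with the vanishing of the unitary part, and then to play the geometry of the fibred surface $f\colon S\to B$ — in particular the relative system of quadrics through the canonical curves — against the way that system grows with $g$. A secondary, more elementary argument disposes of the compact case via Miyaoka--Yau.

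\emph{Step 1 (reduction to an Arakelov equality).} By \cite{vz04} a strictly maximal Higgs field means that $jac(f)$ is a universal family over a Shimura curve of Mumford type; in the Higgs decomposition $F=0$, so the logarithmic Higgs bundle of $jac(f)$ is $(A^{1,0}\oplus A^{0,1},\theta)$ with $A^{1,0}=f_*\omega_{S/B}$ ample, $A^{0,1}=(A^{1,0})^{\vee}$, and $\theta\colon A^{1,0}\to A^{0,1}\otimes\Omega^1_B(\log\Delta_{nc})$ an isomorphism. With $b=g(B)$, $s=\#\Delta_{nc}$ and $r=2b-2+s$ this forces $r\ge 1$ and
\begin{equation*}
\deg f_*\omega_{S/B}=\tfrac{g}{2}\,r .
\end{equation*}
In the non-compact case $\Delta_{nc}\ne\varnothing$ one moreover has $Jac(S_0/U)\sim_U\mathcal E^{\,g}$ for a modular elliptic family $\mathcal E\to U$; hence the local monodromy of $R^1f_*\mathbb Q$ around each point of $\Delta_{nc}$ has $g$ Jordan blocks, each of size two, so that fibre of $f$ is totally degenerate (all components rational) and carries at least $g$ nodes. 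In the compact case $\Delta_{nc}=\varnothing$, Mumford's classification of the quaternionic construction forces $g$ to be a power of $2$, so $g\ge 5$ means $g\ge 8$.

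\emph{Step 2 (the relative quadrics).} Take $S$ relatively minimal with smooth total space, and suppose first that the general fibre of $f$ is non-hyperelliptic. The Higgs field of a family of \emph{curves}, as opposed to an abstract family of abelian varieties, factors through the relative multiplication $\mu\colon\mathrm{Sym}^2 f_*\omega_{S/B}\to f_*\omega_{S/B}^{\otimes 2}$: writing $\mathcal I_2=\ker\mu\subset\mathrm{Sym}^2 f_*\omega_{S/B}$ for the relative ideal of quadrics through the canonical curves, of rank $\tfrac{(g-2)(g-3)}{2}$, the self-adjoint form $\theta\in H^0\!\bigl(B,(\mathrm{Sym}^2 f_*\omega_{S/B})^{\vee}\otimes\Omega^1_B(\log\Delta_{nc})\bigr)$ annihilates $\mathcal I_2$, and maximality of the Higgs field is exactly that $\theta$ is nondegenerate on the rank-$g$ bundle $f_*\omega_{S/B}$ at every point of $B$; equivalently, the Kodaira--Spencer section $\Xi\in H^0\!\bigl(B,(f_*\omega_{S/B}^{\otimes 2})^{\vee}\otimes\Omega^1_B(\log\Delta_{nc})\bigr)$ must be nowhere degenerate. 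Because $f_*\omega_{S/B}$ is ample of the prescribed degree and $\theta$ is an isomorphism, the Higgs sub/quotient sheaves $\mathcal I_2\subset\mathrm{Sym}^2 f_*\omega_{S/B}$ and $f_*\omega_{S/B}^{\otimes 2}$ have slopes rigidly determined by $g,b,s$; an Arakelov/slope estimate for these — the technical heart — should show that ``$\theta$ nondegenerate on $f_*\omega_{S/B}$ while annihilating $\mathcal I_2$ over all of $B$'' cannot hold once $\mathrm{rk}\,\mathcal I_2=\tfrac{(g-2)(g-3)}{2}$ is positive and grows quadratically, i.e. for $g\ge 5$. When the general fibre is hyperelliptic, $\mu$ is no longer surjective and one replaces $\mathcal I_2$ by the relative ideal of the rational normal scroll, using in addition the bound on gonality coming from $Jac(S_0/U)\sim_U\mathcal E^{\,g}$.

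\emph{Step 3 (numerics and the compact case).} The quadric estimate of Step~2 is then combined with the hard invariants of $f$: Noether's formula $\omega_{S/B}^2=12\deg f_*\omega_{S/B}-\delta=6gr-\delta$, the node bound $\delta\ge gs$ from Step~1, and the Xiao/Cornalba--Harris slope inequality together with the (log) Miyaoka--Yau inequality for $\omega_{S/B}^2$, to exclude $g\ge 5$ when $\Delta_{nc}\ne\varnothing$. When $\Delta_{nc}=\varnothing$ one has $b\ge 2$, $g\ge 8$ and $\deg f_*\omega_{S/B}=\tfrac g2(2b-2)$; if the family is smooth then $c_2(S)=(2b-2)(2g-2)$ and Miyaoka--Yau gives $\omega_{S/B}^2\le(2b-2)(2g-2)$, which contradicts Noether's $\omega_{S/B}^2=6g(2b-2)$ since $6g>2g-2$; separating-node fibres are absorbed by the same Miyaoka--Yau/Noether bookkeeping, using that $R^1f_*\mathbb Q$ is monodromy-irreducible by the quaternionic structure. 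The \textbf{expected main obstacle} is sharpness: Shimura families of curves of genus $3$ and $4$ with strictly maximal Higgs field genuinely exist, so every inequality above — the local contribution of a maximally degenerate semistable genus-$g$ fibre, the slope estimate for the relative quadrics $\mathcal I_2$, the Miyaoka--Yau/Noether combination, and the gonality bound in the hyperelliptic case — must be essentially optimal; in particular one must rule out the ``large base genus, few cusps'' regime (large $b$, small $s$), where the purely numerical inequalities leave room and the quadric estimate of Step~2 is indispensable.
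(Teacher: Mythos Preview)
Your proposal has a genuine gap: Step~2, which you yourself call ``the technical heart'', is not a proof but a plan. You assert that an Arakelov/slope estimate for $\mathcal I_2$ ``should show'' that nondegeneracy of $\theta$ is incompatible with the quadratic growth of $\rank\mathcal I_2$, but you never produce that estimate, and you give no indication of how to make it effective at $g=5$. The second-fundamental-form approach you sketch is a real strategy (pursued for instance by Colombo--Frediani--Ghigi), but turning it into a sharp bound is hard, and nothing in your write-up does so. Step~3 then leans on Step~2 (``the quadric estimate of Step~2 is then combined with\dots''), so the non-compact case is left open.

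The paper's proof is entirely different and much more elementary: it never touches the canonical embedding or $\mathcal I_2$. It squeezes $\omega_{S/B}^2$ between two purely numerical bounds. Moriwaki's sharp slope inequality gives
\[
\omega_{S/B}^2\ \ge\ \frac{4(g-1)}{g}\deg f_*\omega_{S/B}
+\frac{3g-4}{g}\,\delta_1(\Upsilon)+\frac{7g-16}{g}\,\delta_h(\Upsilon),
\]
and a refined Miyaoka argument (Miyaoka's bound on $A_k$-singularities, applied after a base change totally ramified over $\Delta_{nc}$ to kill the non-compact contribution, with elliptic tails inserted as the boundary divisor $D$) gives
\[
\omega_{S/B}^2\ \le\ (2g-2)\deg\Omega^1_B(\log\Delta_{nc})
+2\,\delta_1(\Upsilon_c)+3\,\delta_h(\Upsilon_c),
\]
strict when $\Delta_{nc}\ne\emptyset$ or $\Delta=\emptyset$. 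Plugging in the Arakelov equality $\deg f_*\omega_{S/B}=\tfrac g2\deg\Omega^1_B(\log\Delta_{nc})$ makes the main terms cancel and leaves
\[
0\ \ge\ \frac{g-4}{g}\bigl(\delta_1(\Upsilon)+4\,\delta_h(\Upsilon)\bigr),
\]
strict except in the case $\Delta_{nc}=\emptyset\ne\Delta_c$; but then every singular fibre is a compact-type tree and carries a separating node, so the right side is positive. Either way $g\le 4$.

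Your Step~3 gestures at this combination, but the crude Miyaoka--Yau $c_1^2\le 3c_2$ you invoke yields only $\omega_{S/B}^2\le 4(g-1)(b-1)+3\delta_f$, which is too weak: the whole point is to get $(2g-2)(2b-2+s)$ as the main term so that it matches the lower bound, and to have only the \emph{compact}-Jacobian contributions $\delta_i(\Upsilon_c)$ on the right. That refinement (the paper's Theorem~\ref{thmupper}) is the actual work, and it is what your proposal lacks. Your observation that $g$ must be a power of $2$ in the compact case, even if correct, is not needed; the paper's argument is uniform.
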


Our next result is regarding Kuga and Shimura curves arising from families of hyperelliptic curves,
without the assumption on the strictly maximality of Higgs field.
Let
$$\mathcal H_g\subset \mathcal M_g$$ denote the moduli space of smooth hyperelliptic curves,
$$\mathcal H_g^{ct}\subset \overline{\mathcal M}_g$$
denote the moduli space of stable hyperelliptic curves with compact Jacobians and
$$j(\mathcal H_g)\subset j(\mathcal H^{ct}_g)\subset \calt_g$$
denote the images under the Torelli map. Note that the Zariski closure of $j(\mathcal H_g)$
in $\mathcal A_g$ is $j(\mathcal H^{ct}_g)$.

A Kuga or Shimura curve $U\subset \mathcal A_g$ is said to be {\it contained
generically in the Torelli locus $j(\calh^{ct}_g)$ of hyperelliptic curves},
if
$$U \subseteq j(\calh^{ct}_g)\text{\quad and \quad }U\cap j(\calh_g)\neq \emptyset.$$

It is clear that if $f$ is a Kuga (resp. Shimura) family of hyperelliptic curves,
then the image of $U$ under the Torelli map is a Kuga (resp. Shimura)
curve lying generically in  $j(\mathcal H_g^{ct}).$

Conversely, given a Kuga (resp. Shimura) curve $U$
lies generically in  $j(\mathcal H_g^{ct}).$
We like to show $U\subset j(\mathcal H_g^{ct})$ is induced by
a Kuga (resp. Shimura) family of hyperelliptic curves.

By taking an $n$-level structure, we may assume that $U\subseteq \calt_{g,n}\subseteq \cala_{g,n},$
hence, $U$ carries a universal family $h: V\to U$ of abelian varieties,
which is the Kuga (resp. Shimura) family of
abelian varieties over $U$ classifying the level $n$-structure.
We consider now the Torelli map
$$j: \mathcal M_{g,n}\to \mathcal A_{g,n}$$
By Oort-Steenbrink (cf. \cite{os79}), $j$ is a 2-to-1 morphism exactly outside the hyperelliptic locus
$\mathcal H_{g,n}\subset \mathcal M_{g,n}$. Furthermore, the restriction of $j$ to the hyperelliptic locus
$$ j: \mathcal H_{g,n}\to j(\mathcal H_{g,n})\subset  \mathcal A_{g,n}$$
is injective and immersion.  So, we may regard
$$ j: \mathcal H^{ct}_{g,n}\to j(\mathcal H^{ct}_{g,n})$$
as the blowing up along the subvariety $ j(\mathcal H^{ct}_{g,n})\setminus j(\mathcal H_{g,n}).$
Since  $U$ is a smooth  and closed  curve  in $j(\mathcal H^{ct}_{g,n})$ and
$U\cap j(\mathcal H_{g,n})\not=\emptyset$,
the proper transformation  $\hat U\subset \mathcal H^{ct}_{g,n}$
of $U$ under the blowing up $j$ is isomorphic to $U$
$$ j_{\hat U}: \hat U\simeq U.$$
Hence,  the pullback of the Kuga (resp. Shimura) family of abelian varieties
$$h: V\to U$$  under the isomorphism
$j_{\hat U}$
is again a Kuga (resp. Shimura) family
$$ j^*_{\hat U}(h): j^*_{\hat U}(V)\to \hat U$$
By the definition of the Torelli map $j$, it is
the Jacobian of the pullback to $\hat U\to \mathcal H^{ct}_{g,n}$ of
the universal family of stable hyperelliptic curves of compact type
to $\hat U\to \mathcal H^{ct}_{g,n},$  saying
$$f: S_0\to\hat U.$$

\begin{theorem}\label{mainthm}
For $g\geq 8$, there does not exist a Kuga family $f: S\to B$ of hyperelliptic curves.
In particular, for $g\geq 8$ there does not exist a Shimura  family $f: S\to B$ of hyperelliptic curves.
\end{theorem}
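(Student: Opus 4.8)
The plan is to study the logarithmic Higgs bundle $(E^{1,0}\oplus E^{0,1},\theta)$ attached to the family $jac(f)$ of Jacobians, and to exploit the interaction between the maximality of the Higgs field (which holds because $f$ is a Kuga family, by \cite{mvz12}) and the very restrictive geometry of the hyperelliptic locus. First I would reduce to the case where $jac(f)$ has a \emph{maximal} Higgs field, so that $\theta|_{A^{1,0}}:A^{1,0}\to A^{0,1}\otimes\Omega^1_B(\log\Delta_{nc})$ is an isomorphism and the family reaches the Arakelov bound for the ample part $A$; the unitary part $\mathbb F^{1,0}\oplus\mathbb F^{0,1}$ is then a constant (isotrivial) sub-VHS of the Jacobian. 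The key point is that a hyperelliptic curve $C$ of genus $g$ comes with its canonical involution $\iota$, and the quotient $C/\iota\cong\mathbb P^1$; one wants to translate the hyperelliptic structure of the fibres into a structure on $S\to B$ (a double cover of a ruled surface branched along $2g+2$ sections, up to the usual contractions) and then bound the slope-type invariants of such a surface.

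The main steps, in order, are: (i) set up the relative hyperelliptic structure, realising $S\to B$ (after semistable reduction / level structure as in the paragraph preceding the theorem) as arising from a family of $(2g+2)$-pointed rational curves, so that the Higgs data are governed by the position of the branch divisor; (ii) use the maximality of $\theta|_{A^{1,0}}$ together with the Arakelov equality to derive a lower bound on $\deg A^{1,0}$, and hence on relative invariants such as $\deg f_*\omega_{S/B}$ and the number $\#\Delta_{nc}$ of singular fibres, in the spirit of the slope inequalities for hyperelliptic fibrations; (iii) on the other side, bound $\deg f_*\omega_{S/B}$ and the singular-fibre contributions \emph{from above} using the hyperelliptic (ruled-surface double-cover) description — this is where the genus enters, because the branch sections can collide in only finitely many ways and each collision contributes a controlled amount; (iv) combine the two bounds to obtain an inequality that fails once $g\geq 8$, giving the contradiction. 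Strictly speaking, since $f$ is only assumed to be a Kuga family (not strictly maximal), $\mathbb F^{1,0}$ may be non-zero and one must handle the splitting $E=A\oplus F$ carefully; the hyperelliptic involution acts on this decomposition and should be used to show the unitary part is forced to be small or to descend to $\mathbb P^1$, which is rigid.

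I expect the main obstacle to be step (iii): controlling $\deg f_*\omega_{S/B}$ and the singular-fibre invariants from above for an \emph{arbitrary} non-isotrivial semistable hyperelliptic family, uniformly in the combinatorics of how the $2g+2$ branch sections degenerate, and making the resulting numerical bound sharp enough to beat the Arakelov-type lower bound precisely in the range $g\geq 8$ (consistent with the genus-$3$ and genus-$4$ examples the paper exhibits, which must survive the argument). A secondary difficulty is bookkeeping the unitary summand $\mathbb F$ under the hyperelliptic involution so that the argument genuinely covers all Kuga — not merely strictly maximal — families; here I would invoke the fact that $C/\iota\cong\mathbb P^1$ has no moduli to argue that any sub-VHS invariant under $\iota$ and of "$\mathbb P^1$-type" must be constant, pinning down $\mathbb F$, and then feed everything back into the numerical inequality from step (iv).
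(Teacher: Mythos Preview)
Your overall architecture --- pit a lower bound against an upper bound and show they conflict for $g\geq 8$ --- is correct, but the specific implementation is off in several places that matter.

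First, the quantity being squeezed is $\omega_{S/B}^2$, not $\deg f_*\omega_{S/B}$. The Arakelov equality already gives $\deg f_*\omega_{S/B}=\deg A^{1,0}=\tfrac{\rank A^{1,0}}{2}\deg\Omega^1_B(\log\Delta_{nc})$ \emph{exactly}, so there is nothing to bound there. What the paper actually does is: (a) prove an \emph{upper} bound on $\omega_{S/B}^2$ valid for any semi-stable family, coming from Miyaoka's bound on quotient singularities (a Miyaoka--Yau type inequality) applied after a carefully chosen base change; and (b) prove a \emph{lower} bound on $\omega_{S/B}^2$ in terms of $\deg f_*\omega_{S/B}$ and the boundary invariants $\delta_i(\Upsilon)$, $\xi_j(\Upsilon)$, which in the hyperelliptic case comes from the Cornalba--Harris formula. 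You have these roles reversed (you want the hyperelliptic description to give an upper bound), and you do not mention Miyaoka--Yau at all; without it there is no upper bound to play against.

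Second, your treatment of the unitary summand $\mathbb F$ is the hard part and your sketch does not get there. The paper needs to know that $\rank A^{1,0}=g-q_f$, i.e.\ that $\mathbb F^{1,0}$ becomes trivial after an \'etale base change. This is a nontrivial theorem (Theorem~\ref{thmFtrivial}) whose proof uses a global-invariant-cycle statement with \emph{unitary} local-system coefficients together with Bogomolov's lemma on invertible subsheaves of $\Omega^1(\log)$; the hyperelliptic double cover is used to push forward to the ruled surface and force the image to be a rank-one nef sheaf with $M^2=0$. Your idea that ``$C/\iota\cong\mathbb P^1$ has no moduli so anything $\iota$-invariant is constant'' is not what is happening and would not suffice.

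Third, and most seriously for the numerics, the lower bound on $\omega_{S/B}^2$ in the hyperelliptic case with $q_f>0$ requires an additional geometric input you do not mention: when $q_f>0$ the double cover $\wt S\to\wt Y$ is \emph{fibred} over a second base curve (Khashin's theorem), which forces the branch divisor into fibres of a second fibration and yields the inequality \eqref{boundofxi_0eqn} among the $\delta_i$ and $\xi_j$. Without this, the Cornalba--Harris formula alone does not produce a lower bound strong enough to beat the Miyaoka--Yau upper bound in the required range. The paper then finishes with a lengthy case analysis in $q_f$ and in whether $\Delta_{nc}=\emptyset$, using this extra inequality; this is where the threshold $g\geq 8$ actually emerges.
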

By the above discussion, as a consequence of Theorem \ref{mainthm}, we obtain
\addtocounter{theorem}{-1}
{\renewcommand{\thetheorem}{\arabic{section}.\arabic{theorem}$'$}
\begin{theorem}
For $g\geq 8$, there does not exist a  Kuga curve, which lies generically in the Torelli locus
$j(\mathcal H^{ct}_g)$ of hyperelliptic curves.
In particular, for $g\geq 8$ there does not exist a Shimura curve,
which lies generically in $j(\mathcal H^{ct}_g)$.
\end{theorem}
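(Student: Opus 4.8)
The plan is to obtain the statement above as a formal consequence of Theorem~\ref{mainthm}, by running the construction described in the discussion preceding Theorem~\ref{mainthm} in reverse and reaching a contradiction. Since every Shimura curve is in particular a Kuga curve, it suffices to treat the Kuga case. So I would assume, for contradiction, that for some $g\geq 8$ there is a Kuga curve $U\subset\cala_g$ with $U\subseteq j(\calh^{ct}_g)$ and $U\cap j(\calh_g)\neq\emptyset$.

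First I would pass to a level structure, replacing $U$ by a lift to $\cala_{g,n}$ for some $n\geq 3$: such a lift is again a Kuga curve, it is still contained generically in $j(\calh^{ct}_{g,n})$, and it carries the universal family of abelian varieties $h:V\to U$ classifying the $n$-level structure, which is a Kuga family. Next I would invoke the description of the Torelli morphism $j:\calm_{g,n}\to\cala_{g,n}$ due to Oort--Steenbrink \cite{os79}: it is $2$-to-$1$ outside the hyperelliptic locus and a locally closed immersion on $\calh_{g,n}$, so that $j:\calh^{ct}_{g,n}\to j(\calh^{ct}_{g,n})$ may be regarded as the blow-up along $j(\calh^{ct}_{g,n})\setminus j(\calh_{g,n})$. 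Since $U$ is a smooth closed curve in $j(\calh^{ct}_{g,n})$ which is \emph{not} contained in the exceptional locus --- this being exactly the hypothesis $U\cap j(\calh_{g,n})\neq\emptyset$ --- its proper transform $\hat U\subset\calh^{ct}_{g,n}$ is isomorphic to $U$, say via $j_{\hat U}:\hat U\simeq U$. Pulling the universal family $h$ back along $j_{\hat U}$ yields a Kuga family of abelian varieties over $\hat U$ which, by the very definition of the Torelli map, is the relative Jacobian of the pullback $f:S_0\to\hat U$ of the universal family of stable hyperelliptic curves of compact type.

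It then remains to compactify. After a finite base change --- which replaces $\hat U$ by a finite \'etale cover of it, still a Kuga curve and still meeting the locus of smooth hyperelliptic fibres --- semistable reduction extends $f:S_0\to\hat U$ to a semi-stable family of curves $f:S\to B$ over a smooth projective curve $B$ with $B\setminus\Delta_{nc}=\hat U$. Since $jac(f)$ remains a Kuga family of abelian varieties and the general fibre of $f$ is a smooth hyperelliptic curve, $f$ is by definition a Kuga family of hyperelliptic curves of genus $g\geq 8$, contradicting Theorem~\ref{mainthm}; the Shimura case then follows immediately.

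I do not expect a genuine obstacle in this deduction. The geometric content --- the blow-up picture of $j$, the identification $j_{\hat U}:\hat U\simeq U$, and the passage from the pulled-back abelian scheme to a family of stable hyperelliptic curves --- has already been laid out in the discussion preceding Theorem~\ref{mainthm}; what remains are the routine checks that passing to a level structure and performing a semistable compactification preserve the property of being a Kuga (resp. Shimura) family. All the substance of this corollary is carried by Theorem~\ref{mainthm} itself, whose proof is a separate matter.
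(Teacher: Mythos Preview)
Your proposal is correct and follows exactly the route the paper takes: the paper simply states that Theorem~1.2$'$ is obtained ``by the above discussion, as a consequence of Theorem~\ref{mainthm}'', and the discussion in question is precisely the level-structure/Oort--Steenbrink/blow-up argument you reproduce. Your only addition --- the semistable compactification of $f:S_0\to\hat U$ to a family over a projective curve $B$ --- is the natural routine step needed to match the hypotheses of Theorem~\ref{mainthm} (which requires $B$ projective and $S$ smooth), and it is implicit in the paper's formulation.
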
   }

Let $f:\,S \to B$ be a semi-stable family of curves of genus $g\geq 2$.
Let $\Upsilon\to \Delta$
denote the semi-stable singular fibres,
$\Upsilon_{c}\to \Delta_{c}$ denote the singular fibres with compact Jacobians,
$\Upsilon_{nc}\triangleq\Upsilon\setminus\Upsilon_{c} \to \Delta_{nc}\triangleq\Delta\setminus\Delta_{c}$
correspond to singular fibres with non-compact
Jacobians, $U:=B\setminus\Delta_{nc}$, and $S_0:=f^{-1}(U)$.
Then the logarithmic
Higgs bundle associated to the VHS of the relative Jacobian
$$jac(f):~ Jac(S_0)\lra U$$
 is decomposed as Higgs bundles
$$(E^{1,0}\oplus E^{0,1},\theta)
=(A^{1,0}\oplus A^{0,1},~\theta|_{A^{1,0}})\oplus (F^{1,0}\oplus F^{0,1},~0),$$
 where
$$\theta|_{A^{1,0}}:~ A^{1,0}\lra A^{0,1}\otimes\Omega^1_B(\log\Delta_{nc})$$
is described on Page \pageref{defoftheta}.  Since the family $f: S\to B$ is semi-stable,
it is well known that
$$\left(E^{1,0}\oplus E^{0,1},\theta\right)\cong
\left(f_*\Omega^1_{S/B}(\log \Upsilon)\oplus R^1f_*\mathcal O_S,~\theta\right).$$

Theorem \ref{mainthm1} is then
a consequence of the following facts:\\[.1cm]
{\bf (i).}
Given a semi-stable family $f: S\to B$ of curves with strictly maximal Higgs field,
the Arakelov equality for the characterization of the relative Jacobian
$$jac(f):~ Jac(S_0)\lra U$$
to be a Shimura family becomes (cf. \cite{vz04}):
\begin{equation}\label{arakelovg}
\deg f_*\Omega^1_{S/B}(\log\Upsilon)= \deg A^{1,0}=
\frac{\rank A^{1,0}}2\cdot\deg\Omega^1_{B}(\log\Delta_{nc})
=\frac{g} 2\cdot\deg\Omega^1_B(\log\Delta_{nc}).
\end{equation}
{\bf (ii).}
The following two theorems on a non-isotrivial semi-stable family of curves.
\vspace{0.2cm}

Let $f:\,S \to B$ be as above.
For every singular fibre $F$, let
$\delta_i(F)$ be the number of nodes of $F$ of type $i$ ($1\leq i \leq [g/2]$),
where a node $q$ of $F$ is said to be of type $i$ ($1\leq i \leq [g/2]$), if
the partial normalization of $F$ at $q$ consists of two connected components
of arithmetic genera $i$ and $g-i$.
Let
\begin{equation}\label{defofssMM}
\left\{\begin{aligned}
~&\delta_i(\Upsilon)~\,=\sum_{F \in \Upsilon}\delta_i(F),\qquad
&&\delta_h(\Upsilon)~\,=\sum_{i=2}^{[g/2]} \delta_{i}(\Upsilon);\\
&\delta_i(\Upsilon_{c})=\sum_{F \in \Upsilon_{c}}\delta_i(F),\qquad
&&\delta_h(\Upsilon_{c})=\sum_{i=2}^{[g/2]}\delta_{i}(\Upsilon_{c}).
\end{aligned}\right.
\end{equation}

\begin{theorem}\label{thmlower1}
Let $f:\,S \to B$ be a non-isotrivial
semi-stable family of curves of genus $g\geq2$ as above,
and $\omega_{S/B}=\omega_{S}\otimes f^*\omega_B^{\vee}$ the relative canonical sheaf.
Then
\begin{equation}\label{eqnlower1}
\omega_{S/B}^2\geq \frac{4(g-1)}{g}\cdot\deg \left(f_*\Omega^1_{S/B}(\Upsilon)\right)
+\frac{3g-4}{g}\delta_1(\Upsilon)+\frac{7g-16}{g}\delta_h(\Upsilon).
\end{equation}
\end{theorem}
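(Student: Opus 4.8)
The strategy is to reduce \eqref{eqnlower1} to a refined slope inequality of Cornalba--Harris type, and then to produce the node terms out of the behaviour of the relative canonical system along the reducible fibres. Since $f:S\to B$ is semi-stable it is relatively minimal, and $\Omega^1_{S/B}(\log\Upsilon)\cong\omega_{S/B}$, so Noether's formula for fibred surfaces reads
\[
\omega_{S/B}^2=12\deg f_*\Omega^1_{S/B}(\log\Upsilon)-\delta_0(\Upsilon)-\delta_1(\Upsilon)-\delta_h(\Upsilon),
\]
where $\delta_0(\Upsilon)$ is the number of non-separating nodes. Substituting this, \eqref{eqnlower1} becomes equivalent to the numerical inequality
\[
(8g+4)\deg f_*\Omega^1_{S/B}(\log\Upsilon)\ \geq\ g\,\delta_0(\Upsilon)+4(g-1)\,\delta_1(\Upsilon)+8(g-2)\,\delta_h(\Upsilon),
\]
which is the form I would actually prove. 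Its right-hand side is dominated by the Cornalba--Harris combination $g\delta_0+\sum_{i\geq1}4i(g-i)\delta_i$, since $4i(g-i)=4(g-1)$ when $i=1$ and $4i(g-i)\geq 8(g-2)$ for every $2\leq i\leq[g/2]$ (the function $i\mapsto i(g-i)$ being increasing there); so for a family whose general fibre is non-hyperelliptic the desired bound is precisely the Cornalba--Harris inequality.

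To keep things self-contained I would reprove what is needed by Xiao's method. Let $0\subsetneq\mathcal E_1\subsetneq\cdots\subsetneq\mathcal E_n=f_*\omega_{S/B}$ be the Harder--Narasimhan filtration, with slopes $\mu_1>\cdots>\mu_n\geq0$ and ranks $r_i$, and for a general fibre $F$ let $d_i$ be the degree of the moving part of the subsystem of $|K_F|$ spanned by the image of $\mathcal E_i$, so that $d_n=2g-2$ and $\deg f_*\omega_{S/B}=\sum_i(r_i-r_{i-1})\mu_i$. Xiao's inequality bounds $\omega_{S/B}^2$ from below by a weighted combination of the $\mu_i$ with weights built from the $d_i$, and the relative Clifford theorem gives $d_i\geq 2(r_i-1)$ for the special pieces; optimizing the resulting linear-programming problem recovers the coefficient $\tfrac{4(g-1)}{g}$, i.e. \eqref{eqnlower1} with $\delta_1=\delta_h=0$. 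This part works uniformly for all genera and all (hyperelliptic or not) general fibres.

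The new input, and the source of the $\delta_1$ and $\delta_h$ terms, is the contribution of the reducible fibres to the fixed part of the relative canonical system. If $q$ is a separating node of type $i$ on a fibre $F=F_1\cup F_2$ with $p_a(F_1)=i$ and $p_a(F_2)=g-i$, then every global section of $\omega_{S/B}$ restricts on $F$ to a pair of Rosenlicht differentials which are regular, hence have vanishing residue, at $q$; therefore $q$ is a base point of $f^*f_*\omega_{S/B}\to\omega_{S/B}$, and when $i=1$ the genus-one tail $F_1$, for which $h^0(\omega_{F_1})=1$, is contracted altogether by the relative canonical map. Writing $\omega_{S/B}=M+Z$ with $M$ the moving part and $Z\geq0$ the fixed part (after resolving the base locus), one estimates $\omega_{S/B}\cdot Z$ from below by a local analysis at each type-$i$ node --- using $F_1^2=-1$, $\omega_{S/B}\cdot F_1=2i-1$, the multiplicity of $q$ in $Z$, and the cross-terms --- while at the same time feeding the drop of $M\cdot F_1$ and $M\cdot F_2$ at such nodes back into the estimate for $\omega_{S/B}\cdot M$. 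The two effects together should produce exactly $\tfrac{3g-4}{g}\delta_1(\Upsilon)+\tfrac{7g-16}{g}\delta_h(\Upsilon)$ beyond $\tfrac{4(g-1)}{g}\deg f_*\omega_{S/B}$.

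The main obstacle is this synthesis: one has to show that the \emph{loss} in the Clifford/Xiao estimate caused by a reducible fibre is always compensated --- in fact overcompensated by the stated amount --- and this must be checked uniformly in the node type $i$ and for both hyperelliptic and non-hyperelliptic general fibre. The hyperelliptic case is the delicate one (the canonical subsystems are no longer very ample and the Clifford bounds degenerate, and for $g=2$ every fibre is hyperelliptic), so there I would instead present $S$ birationally as a double cover $S\to R$ of a $\mathbb P^1$-bundle $R\to B$, rewrite $\omega_{S/B}^2$, $\deg f_*\omega_{S/B}$ and the $\delta_j(\Upsilon)$ in terms of the branch divisor and the singularities it acquires over the reducible fibres, and reduce the inequality to a lower bound for the associated Horikawa indices, which can be verified singularity by singularity.
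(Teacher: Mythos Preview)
Your reduction via Noether's formula is correct and is exactly what the paper does: the inequality \eqref{eqnlower1} is equivalent to
\[
(8g+4)\deg f_*\omega_{S/B}\ \geq\ g\,\delta_0(\Upsilon)+4(g-1)\,\delta_1(\Upsilon)+8(g-2)\,\delta_h(\Upsilon),
\]
and you correctly observe that this is dominated by
\[
(8g+4)\deg f_*\omega_{S/B}\ \geq\ g\,\delta_0(\Upsilon)+\sum_{i=1}^{[g/2]}4i(g-i)\,\delta_i(\Upsilon).
\]
At this point the paper simply \emph{cites} this last inequality: it is Moriwaki's sharp slope inequality \cite[Theorem~D]{moriwaki98}, valid for every non-isotrivial semi-stable family regardless of whether the general fibre is hyperelliptic. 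The whole proof in the paper is five lines.

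Where your proposal diverges is in attempting to \emph{reprove} Moriwaki's inequality from scratch via Xiao's Harder--Narasimhan method plus a base-locus analysis at separating nodes. The first half of that plan is fine: Xiao's argument does give the leading coefficient $\tfrac{4(g-1)}{g}$. The second half, however, is the genuine gap. Xiao's method is not known to produce the sharp coefficients $4i(g-i)$ for the boundary classes; the local contribution you sketch (``$\omega_{S/B}\cdot Z$ from below by a local analysis at each type-$i$ node \ldots\ the two effects together should produce exactly \ldots'') is asserted rather than computed, and in practice the interaction between the Harder--Narasimhan filtration and the fixed part over reducible fibres is delicate --- the drop in $d_i$ depends on which subsystem $\mathcal E_i$ one is looking at, not just on the node type. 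Moriwaki's own proof goes through a relative Bogomolov inequality for vector bundles, a quite different mechanism, and the various refinements of Xiao's method in the literature do not recover his full statement. Your separate treatment of the hyperelliptic case via the branch divisor is in the spirit of Cornalba--Harris, but that would only handle hyperelliptic fibrations, not the general statement of the theorem.

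In short: your Noether reduction is right and matches the paper; after that, cite Moriwaki rather than trying to reinvent his theorem.
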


\begin{theorem}\label{thmupper}
Let $f:\,S \to B$ be the same as in Theorem {\rm\ref{thmlower1}}. Then
\begin{equation}\label{eqnupper}
\omega_{S/B}^2 \leq (2g-2)\cdot \deg\left(\Omega^1_{B}(\log\Delta_{nc})\right)
+2\delta_1(\Upsilon_{c})+3\delta_h(\Upsilon_{c}).
\end{equation}
Moreover, if $\Delta_{nc}\neq \emptyset$ or $\Delta=\emptyset$, then the above inequality is strict.
\end{theorem}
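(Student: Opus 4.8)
The plan is to deduce (\ref{eqnupper}) from a \emph{logarithmic Bogomolov--Miyaoka--Yau inequality} applied to $S$ with a carefully chosen reduced boundary divisor; write $b$ for the genus of $B$. Note first that $K_S+\Upsilon_{nc}=\omega_{S/B}+f^*\Omega^1_B(\log\Delta_{nc})$, and that non-isotriviality gives $\deg A^{1,0}>0$, so by the Arakelov inequality $\deg\Omega^1_B(\log\Delta_{nc})>0$; hence $f^*\Omega^1_B(\log\Delta_{nc})$ is nef, and as $\omega_{S/B}$ is nef for a semi-stable family, $K_S+\Upsilon_{nc}$ is nef (and, as it turns out, big). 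Running log-BMY with boundary $\Upsilon_{nc}$ alone already yields the weaker bound $\omega_{S/B}^2\le(2g-2)\deg\Omega^1_B(\log\Delta_{nc})+3\delta_1(\Upsilon_c)+3\delta_h(\Upsilon_c)$; the improvement of the $\delta_1(\Upsilon_c)$-coefficient from $3$ to $2$ will come from enlarging the boundary by the ``elliptic tails'' attached at the compact-type nodes of type $1$.

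Concretely, let $\Theta\subset S$ be the disjoint union of the smooth elliptic components $E_0$ of the genus-$1$ subcurves cut off by the compact-type nodes of type $1$ (each such node, the fibre being of compact type, separates off a genus-$1$ tree carrying a unique elliptic component). After a preliminary contraction of the vertical $(-2)$-curves in those trees --- which does not change $\omega_{S/B}^2$, and only requires passing to the orbifold version of the inequality below --- one may assume each $E_0$ is a genuine $(-1)$-curve meeting the rest of its fibre at one point, so that $E_0^2=-1$, $K_S\cdot E_0=\omega_{S/B}\cdot E_0=1$ and $\#\Theta=\delta_1(\Upsilon_c)$. Put $D:=\Upsilon_{nc}+\Theta$, a log-smooth reduced divisor. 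Then $K_S+D$ is still nef: on $E_0\subset\Theta$ one has $(K_S+D)\cdot E_0=K_S\cdot E_0+\Theta\cdot E_0=1-1=0$, and on every curve $C\not\subset\Theta$ it is $(\text{nef})+\Theta\cdot C\ge 0$; it is big since $(K_S+D)^2\ge(K_S+\Upsilon_{nc})^2\ge 2(2g-2)\deg\Omega^1_B(\log\Delta_{nc})>0$. Moreover
$$(K_S+D)^2=(K_S+\Upsilon_{nc})^2+2(K_S+\Upsilon_{nc})\cdot\Theta+\Theta^2=\omega_{S/B}^2+2(2g-2)\deg\Omega^1_B(\log\Delta_{nc})+\delta_1(\Upsilon_c),$$
and, using $e(F)=2-2g+\delta(F)$ for a genus-$g$ nodal fibre $F$ (so $e(S)=4(g-1)(b-1)+\delta(\Upsilon)$ and $e(\Upsilon_{nc})=(2-2g)\#\Delta_{nc}+\delta(\Upsilon_{nc})$) together with $e(\Theta)=0$,
$$e(S)-e(D)=4(g-1)(b-1)+(2g-2)\#\Delta_{nc}+\delta_1(\Upsilon_c)+\delta_h(\Upsilon_c).$$

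Now I would apply the logarithmic Bogomolov--Miyaoka--Yau inequality $(K_S+D)^2\le 3\bigl(e(S)-e(D)\bigr)$, valid for the log-smooth pair $(S,D)$ with $K_S+D$ nef and big. Substituting the two displays, using $12(g-1)(b-1)=3(2g-2)(2b-2)$ and $\deg\Omega^1_B(\log\Delta_{nc})=(2b-2)+\#\Delta_{nc}$, the term $2(2g-2)\deg\Omega^1_B(\log\Delta_{nc})$ cancels and, after the extra $\delta_1(\Upsilon_c)$ on the left is moved over, one is left with exactly $\omega_{S/B}^2\le(2g-2)\deg\Omega^1_B(\log\Delta_{nc})+2\delta_1(\Upsilon_c)+3\delta_h(\Upsilon_c)$. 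For the strictness: equality would force equality in log-BMY, i.e.\ $S\setminus D$ would carry a complete finite-volume Kähler--Einstein metric of negative curvature uniformised by the $2$-ball, making $(S,D)$ a (log) ball quotient. If $\Delta_{nc}\ne\emptyset$ this is impossible, since the cusp (parabolic) structure of a $2$-ball quotient along the nonempty boundary $\Upsilon_{nc}$ is incompatible with a toric (non-compact-type) degeneration of a genus-$g$ curve; if $\Delta=\emptyset$ then $D=\emptyset$ and $S$ would be a compact ball quotient admitting a smooth fibration over a curve with fibres of genus $\ge 2$, which does not exist. In the remaining case ($\Delta\ne\emptyset$, $\Delta_{nc}=\emptyset$) no contradiction arises and equality can genuinely occur.

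The step I expect to be the main obstacle is securing the logarithmic BMY inequality in precisely the generality needed --- $K_S+D$ merely nef and big rather than ample, and with rational double points present on the relative canonical model --- and, closely tied to it, describing the equality locus sharply enough to run the strictness argument. A secondary but genuinely delicate point is the local analysis of the compact-type fibres: one must ensure that, after contracting vertical $(-2)$-curves, every type-$1$ compact-type node really does contribute a $(-1)$-elliptic component, control the rational bridges and any adjacent or nested genus-$1$ subcurves, and check that the orbifold correction to $e(S)-e(D)$ at the resulting $A_n$-points leaves the final coefficients $(2g-2)$, $2$ and $3$ unchanged.
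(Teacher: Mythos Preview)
Your strategy --- put $\Upsilon_{nc}$ directly into the log boundary and run log-BMY, rather than (as the paper does) killing $\Upsilon_{nc}$ by a base change totally ramified over $\Delta_{nc}$ with index $e\to\infty$ and then invoking Miyaoka's inequality for quotient singularities --- is a legitimate alternative, and your numerics check out \emph{provided} $|\Theta|=\delta_1(\Upsilon_c)$. But that identity fails in general, and this is the real gap.

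If an elliptic tail $E_0$ in a compact-type fibre is joined to the rest of the fibre through a chain of $k$ rational $(-2)$-curves, every one of the $k{+}1$ nodes along that chain is a node of type $1$, so this single tail contributes $k{+}1$ to $\delta_1(\Upsilon_c)$ but only $1$ to $|\Theta|$. Without contracting, your log-BMY computation therefore yields only
\[
\omega_{S/B}^2\;\le\;(2g-2)\deg\Omega^1_B(\log\Delta_{nc})+\bigl(3\,\delta_1(\Upsilon_c)-|\Theta|\bigr)+3\,\delta_h(\Upsilon_c),
\]
and since $|\Theta|\le\delta_1(\Upsilon_c)$ this is too weak. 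Contracting the $(-2)$-chains and ``passing to the orbifold version'' does not fix this for free: the orbifold correction at each resulting $A_k$-point is $(k{+}1)-\tfrac{1}{k+1}$, which must be fed back into the Euler-characteristic side, and the image of $E_0$ now passes through the singular point rather than lying on the smooth locus. Carrying this out correctly is exactly Miyaoka's theorem on quotient singularities (the paper's Theorem~\ref{theoreminpfupper1}), and the combinatorial fact you then need --- that the $A_k$-contributions together with the number of ``clean'' elliptic tails dominate $\delta_1(\Upsilon_c)$ --- is precisely the paper's Claim~\ref{e+Dgeqdelta1}. So once the $(-2)$-chains are handled honestly your route converges to the paper's.

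Two smaller points. First, for $g=2$ a compact-type singular fibre is two elliptic curves meeting at a point, so $\Theta$ is not log-smooth; the paper treats $g=2$ separately via Tan's strict canonical class inequality. Second, your strictness arguments are soft: for $\Delta_{nc}\neq\emptyset$ the paper obtains strictness directly from the base-change limit (the residual term $-\tfrac{(2g-2)s_{nc}}{e}$ is negative for $e$ large), and for $\Delta=\emptyset$ it quotes Liu's strict height inequality for Kodaira fibrations rather than asserting outright that no compact ball quotient admits a smooth genus-$g$ fibration.
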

\noindent
Theorem \ref{thmlower1} is a direct consequence of Moriwaki's sharp slope inequality
(cf. \cite{moriwaki98}).
While Theorem \ref{thmupper} is based on Miyaoka's theorem (cf. \cite{miyaoka84})
for the bound on the number of quotient singularities in a surface.
The base change technique is also a key point.

The proof of Theorem \ref{mainthm} is much more complicated,
but the idea is the same as that of Theorem \ref{mainthm1}.
Without the assumption on the strictly maximality of Higgs field,
we have to bound the rank of the flat part of the Higgs bundle
and to give an analogous lower bound of $\omega_{S/B}^2$ for a semi-stable family $f:\,S \to B$
of hyperelliptic curves with positive relative irregularity $q_f:=q(S)-g(B)$.

\begin{theorem}\label{thmFtrivial}
Let $f:\,S \to B$ be as in Theorem {\rm\ref{thmlower1}}.
If $f$ is a hyperelliptic family, then
after passing to a finite \'etale cover of $B$, the local subsystems
$\mathbb F^{1,0}$ and $\mathbb F^{0,1}$ become to trivial local systems.
\end{theorem}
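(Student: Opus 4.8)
The plan is to exploit the hyperelliptic involution $\sigma$ on each fiber, which globalizes to an involution on $S$ (after an étale base change of $B$ to make the family admit a global hyperelliptic structure), and to examine how $\sigma$ acts on the Higgs bundle. The involution acts as $-1$ on each $H^0(\Omega^1)$ of a smooth fiber (since holomorphic forms on a hyperelliptic curve are anti-invariant), hence $\sigma$ acts as $-1$ on $E^{1,0}$ over $U$, and dually as $-1$ on $E^{0,1}$; thus $\sigma$ acts as $-1$ on the whole weight-one variation $\mathbb V$ over $U$. This means $\sigma$ commutes with the monodromy and with the Higgs field, so it preserves the canonical decomposition $(E,\theta)=(A,\theta|_A)\oplus(F,0)$, acting as $-1$ on both $A$ and $F$ (in particular on $\mathbb F^{1,0}\oplus\mathbb F^{0,1}$).

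Next I would pass to the quotient. Let $\pi:\,S\to \mathbb P=S/\langle\sigma\rangle$; then $\mathbb P\to B$ is (after the étale base change) a family of rational curves, i.e. a ruled surface or more precisely a $\mathbb P^1$-bundle over $B$ with a branch divisor $R\subset \mathbb P$ of relative degree $2g+2$ (plus $2$ coming from the two sections over points with compact, or the nodes at non-compact, singular fibers). The double cover $S\to\mathbb P$ is determined by $R$ and a line bundle, and the Hodge-theoretic data of $jac(f)$ is governed by the variation of the $2g+2$ branch points on the fibers of $\mathbb P\to B$; concretely, $R^1f_*\mathbb C$ over $U$ is (up to a Tate twist and the trivial summand coming from the ambient ruled surface) the local system attached to the family of cyclic double covers of $\mathbb P^1$ branched at those points, and this is a subquotient of a direct sum of copies of the variation coming from the classifying map $U\to \calm_{0,2g+2}$. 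The key structural point I want is that the unitary (flat) part $\mathbb F^{1,0}\oplus\mathbb F^{0,1}$ of this variation, which is a sub-VHS of weight one, must then have finite monodromy: a flat sub-bundle of the weight-one variation attached to a family of branched covers of $\mathbb P^1$ comes from a quotient of the surface group of the punctured base, and using the fact that the family of $2g+2$ points has maximal variation along $U$ in the relevant directions (or invoking that a polarizable unitary VHS over $U$ with this structure has image in a compact group that is constrained to be finite by the rigidity coming from $R\xrightarrow{} B$ being an algebraic family), one concludes the monodromy on $\mathbb F$ is finite.

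Finally, a flat subsystem with finite monodromy becomes trivial after a further finite étale cover of $B$ (take the cover trivializing the monodromy representation restricted to $\mathbb F$; this is étale over $U$ and extends to a finite cover of $B$ since the family is semi-stable, so the local monodromies are unipotent and the unitary part has trivial local monodromy). Composing the two étale base changes gives the statement. The main obstacle I anticipate is the middle step: showing that the flat part of the relative $H^1$ of a hyperelliptic family has \emph{finite} monodromy, rather than merely being unitary. I expect this to require a careful analysis of the period map for families of hyperelliptic curves — perhaps via the fact that for hyperelliptic curves the Torelli map factors through $\calm_{0,2g+2}$ and the only flat directions correspond to moduli of the $\mathbb P^1$-bundle and branch divisor that are \emph{constant} in moduli, forcing isotriviality of that summand, hence finiteness of its monodromy. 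An alternative route to this middle step is to use Theorem \ref{thmupper} and an Arakelov-type estimate directly: bound $\deg F^{1,0}$ (which is $0$ since $\mathbb F^{1,0}$ is unitary, so $\deg F^{1,0}\le 0$ always, but one needs more) by relating $\rank F$ to the geometry via the slope inequalities, although this seems to give the rank bound rather than triviality, so the period-map argument is the one I would pursue.
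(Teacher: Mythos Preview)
Your proposal correctly identifies the quotient ruled surface $\mathbb P=S/\langle\sigma\rangle\to B$ as the right stage, but the middle step --- showing that the unitary local system $\mathbb F^{1,0}$ has \emph{finite} monodromy --- is a genuine gap, and you acknowledge this. The appeals to ``maximal variation of the $2g+2$ points'', ``rigidity'', or ``flat directions being constant in moduli'' are not arguments: a unitary sub-VHS need not have finite monodromy in general, and nothing you have written singles out the hyperelliptic case. The observation that $\sigma$ acts as $-1$ on $\mathbb V$ is correct but contributes nothing, since it holds fiberwise on the whole of $\mathbb V$ and does not distinguish $F$ from $A$.

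The paper fills exactly this gap with two concrete tools that your sketch is missing. First, a global-invariant-cycle theorem with unitary locally constant coefficients (Lemma~\ref{lemmapeters}) shows that any irreducible unitary summand $\mathcal U\subseteq F^{1,0}$ lifts to a sheaf map $f^*\mathcal U\to\Omega^1_S$ (Corollary~\ref{corollaylifting}); this is the step that converts the abstract flat subbundle into an honest subsheaf of differentials on the total space. Second, pushing this forward along the double cover $\tilde\pi:\wt S\to\wt Y$ and projecting to the anti-invariant part lands $\tilde h^*\mathcal U$ inside $\Omega^1_{\wt Y}(\log\wt R)(-\wt L)$; Bogomolov's lemma on invertible subsheaves of $\Omega^1_{\wt Y}(\log\wt R)$ then forces the image to be an invertible sheaf $M$ with $M^2=0$ and $M\cdot D=0$ for every component $D$ of the branch locus (Lemma~\ref{LR=0}). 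Since the branch locus contains $2g+2$ disjoint \emph{sections} of $\tilde h$, restricting $M$ to one such section produces a degree-$0$ line-bundle quotient of $\mathcal U$ on $B$; polystability of the unitary bundle $\mathcal U$ forces this to split off, contradicting irreducibility unless $\rank\mathcal U=1$. Thus $F^{1,0}$ is a direct sum of degree-$0$ line bundles, and each is torsion in $\Pic^0(B)$ by \cite[\S4.2]{deligne71} or \cite[Theorem~3.4]{bar98}, giving finiteness of the monodromy.

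Neither of these ingredients --- the lifting via invariant cycles with coefficients, nor the Bogomolov constraint on the ruled surface combined with the existence of sections in the branch locus --- appears in your outline, and I do not see how your period-map heuristic can be made to replace them.
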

\noindent
So, by Theorem \ref{thmFtrivial} together with Deligne's global invariant cycle theorem
(cf. \cite[\S\,4.1]{deligne71}) or Fujita's decomposition theorem (cf. \cite[Theorem\,3.1]{fujita78}),
we show that the relative irregularity  $q_f$ is equal to $\rank F^{1,0}$
after passing a finite \'etale cover of $B$, i.e.,
$$q_f=\rank F^{1,0}=g-\rank A^{1,0}.$$
Thus the Arakelov equality (cf. \cite{mvz12}) for the characterization of the relative Jacobian
of $f:\,S\to B$ to be a Kuga family becomes
\begin{equation}\label{arakelovrankA}
\deg f_*\Omega^1_{S/B}(\log\Upsilon)
=\frac{g-q_f} 2\cdot\deg\Omega^1_B(\log\Delta_{nc}).
\end{equation}

\begin{theorem}\label{thmlower}
Let $f:\,S \to B$ be as in Theorem {\rm\ref{thmlower1}} and $q_f=q(S)-g(B)$ the relative irregularity.
Assume that $f$ is a hyperelliptic family. Then
\begin{eqnarray}
&&\hspace{-0.5cm}\omega_{S/B}^2~\geq~\frac{4(g-1)}{g-q_f}
\cdot\deg \left(f_*\Omega^1_{S/B}(\Upsilon)\right)+\label{eqnlower}\\[0.15cm]
&&\hspace{-0.5cm}
\left\{
\begin{aligned}
&\frac{3g^2-(8q_f+1)g+10q_f-4}{(g+1)(g-q_f)} \delta_1(\Upsilon)
+\frac{7g^2-(16q_f+9)g+34q_f-16}{(g+1)(g-q_f)} \delta_h(\Upsilon), &&
\text{if~} \Delta_{nc}\neq \emptyset;\\
&\sum_{i=1}^{[g/2]} \left(\frac{4(2g+1-3q_f)i(g-i)}{(2g+1)(g-q_f)}-1\right) \delta_i(\Upsilon), &&
\text{if~}\Delta_{nc}= \emptyset.
\end{aligned}\right.\nonumber
\end{eqnarray}
Moreover, if $\Delta_{nc}=\emptyset$ and $q_f\geq 2$, then
\begin{equation}\label{eqnlowerSigma_0=0}
\sum_{i=q_f}^{[g/2]} \frac{(2i+1)(2g+1-2i)}{g+1} \cdot \delta_i(\Upsilon)\geq
\sum_{i=1}^{q_f-1} 4i(2i+1) \cdot \delta_i(\Upsilon).
\end{equation}
\end{theorem}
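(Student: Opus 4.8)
The plan is to combine the structural information provided by Theorem~\ref{thmFtrivial} with Xiao's slope method for hyperelliptic fibrations, keeping track of the relative irregularity $q_f$ at every step. First I would pass to a finite \'etale cover $\pi\colon B'\to B$ as in Theorem~\ref{thmFtrivial}, so that after base change the flat local subsystems $\mathbb F^{1,0},\mathbb F^{0,1}$ become trivial; by the decomposition of Fujita and Koll\'ar recalled in Section~\ref{sectionintroduction} this means that the pulled-back family has $f'_*\omega_{S'/B'}\cong A^{1,0}\oplus\mathcal O_{B'}^{\oplus q_f}$ with $A^{1,0}$ ample of rank $g-q_f$. If $\pi$ has degree $d$ then $\omega_{S/B}^2$, $\deg f_*\Omega^1_{S/B}(\Upsilon)$ and every $\delta_i(\Upsilon)$ are each multiplied by $d$, the condition $\Delta_{nc}=\emptyset$ is preserved, and the rank of the flat part is unchanged so that $q_{f'}=q_f$; hence (\ref{eqnlower}) and (\ref{eqnlowerSigma_0=0}) are invariant under this operation and we may assume from the outset that $f_*\omega_{S/B}=\mathcal N\oplus\mathcal O_B^{\oplus q_f}$ with $\mathcal N\cong A^{1,0}$ ample of rank $g-q_f$, noting also that $\deg f_*\Omega^1_{S/B}(\Upsilon)=\deg\mathcal N$ since the flat part has degree $0$. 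As an ample bundle on a curve has positive minimal slope, the Harder--Narasimhan filtration $0=V_0\subsetneq\cdots\subsetneq V_n=f_*\omega_{S/B}$ then has $V_{n-1}=\mathcal N$ of rank $g-q_f$, slopes $\mu_1>\cdots>\mu_{n-1}>\mu_n=0$, and bottom graded piece $V_n/V_{n-1}=\mathcal O_B^{\oplus q_f}$.

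Next I would run Xiao's argument on this filtration, in the refined form used by Cornalba--Harris in the hyperelliptic case and by Moriwaki (\cite{moriwaki98}) for the node-type contributions: one bounds $\omega_{S/B}^2$ below by a positive combination, over the Harder--Narasimhan steps, of the slope gaps $\mu_i-\mu_{i+1}$ times the relative self-intersections of the moving parts of the subsystems of $|K_F|$ cut out by the $V_i$, and estimates these using that on a general hyperelliptic fibre the linear system attached to $\mathcal N$ is composed with the hyperelliptic $g^1_2$. The new point is that the bottom step has slope $0$ and rank $q_f$, hence contributes nothing, so the effective rank of the ample part is $g-q_f$; feeding this into the Clifford-type estimates for the moving parts replaces the factor $\tfrac{4(g-1)}{g}$ of Theorem~\ref{thmlower1} by $\tfrac{4(g-1)}{g-q_f}$. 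For the local terms, when $\Delta_{nc}\neq\emptyset$ I would rerun Moriwaki's case analysis over the node types with this effective rank $g-q_f$; one checks that at $q_f=0$ it recovers the coefficients $\tfrac{3g-4}{g},\tfrac{7g-16}{g}$ of Theorem~\ref{thmlower1}, and in general it yields the stated coefficients of $\delta_1(\Upsilon)$ and $\delta_h(\Upsilon)$. When $\Delta_{nc}=\emptyset$, every singular fibre is of compact type; here I would instead pass to the double-cover model $\widetilde S\to\widetilde P\to B$ with $\widetilde P\to B$ a $\mathbb P^1$-bundle and $\widetilde S\to\widetilde P$ a double cover, over a type-$i$ node of which the $2g+2$ branch points split into groups of $2i+1$ and $2(g-i)+1$, with non-negative Horikawa-type corrections, and carry Xiao's estimate through this model (again with $g-q_f$ in place of $g$) to obtain the coefficient $\tfrac{4(2g+1-3q_f)i(g-i)}{(2g+1)(g-q_f)}-1$ of each $\delta_i(\Upsilon)$.

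For the auxiliary inequality (\ref{eqnlowerSigma_0=0}) I would argue separately. When $\Delta_{nc}=\emptyset$ the variation extends to an abelian scheme over $B$ which, after the \'etale cover above, splits up to isogeny as the product of a constant abelian variety $C$ of dimension $q_f$ with a variation of ample Higgs field; over a point carrying a type-$i$ node the Jacobian degenerates to $Jac(C')\times Jac(C'')$ with $\dim Jac(C')=i$ and $\dim Jac(C'')=g-i$, so for $i<q_f$ the constant factor $C$ cannot be accommodated on the genus-$i$ side and forces a constant part of dimension at least $q_f-i$ on the genus-$(g-i)$ side. Quantifying this forced isotriviality at each such node by means of the local invariants of the double-cover model --- where the $2i+1$ versus $2(g-i)+1$ splitting enters once more --- and summing over all singular fibres should give exactly (\ref{eqnlowerSigma_0=0}). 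The step I expect to be hardest is the numerical bookkeeping: matching the sharp coefficients in the previous paragraph requires a Moriwaki-style optimisation that is genuinely intricate once the extra parameter $q_f$ and the two regimes $\Delta_{nc}\neq\emptyset,\ \Delta_{nc}=\emptyset$ are both present, and turning the incompatibility at small-index nodes into the quantitative inequality (\ref{eqnlowerSigma_0=0}) --- rather than a mere numerical bound on $q_f$ --- is the most delicate part.
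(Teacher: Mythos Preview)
Your strategy diverges substantially from the paper's, and the parts where you are vague are precisely the parts where the paper relies on a geometric observation you do not mention.

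The paper does \emph{not} use Theorem~\ref{thmFtrivial}, Xiao's Harder--Narasimhan method, or a Moriwaki-type node analysis to prove Theorem~\ref{thmlower}. Instead it starts from the Cornalba--Harris \emph{equalities} for hyperelliptic families,
\[
\deg f_*\Omega^1_{S/B}(\Upsilon)=\frac{g}{4(2g+1)}\xi_0(\Upsilon)+\sum_i\frac{i(g-i)}{2g+1}\delta_i(\Upsilon)+\sum_j\frac{(j+1)(g-j)}{2(2g+1)}\xi_j(\Upsilon),
\]
together with the companion formula for $\omega_{S/B}^2$, so that $\omega_{S/B}^2-\tfrac{4(g-1)}{g-q_f}\deg f_*\Omega^1_{S/B}(\Upsilon)$ is an \emph{explicit} linear combination of the $\xi_0,\delta_i,\xi_j$ with a negative coefficient $-\tfrac{(g-1)q_f}{(2g+1)(g-q_f)}$ in front of $\xi_0(\Upsilon)$. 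The entire problem then reduces to bounding $\xi_0(\Upsilon)$ (and the low-index $\delta_i,\xi_j$) from above. This is done by a single geometric input you are missing: when $q_f>0$, the hyperelliptic double cover $\tilde\pi:\wt S\to\wt Y$ is \emph{fibred} in the sense of Khashin, i.e.\ there is a second fibration $\tilde h':\wt Y\to\bbp^1$ whose fibres contain the branch divisor $\wt R$. Exploiting this on the ruled surface (every vertical $(-1)$-curve of the contraction $\hat Y$ meets $\hat R$ in at least $2q_f+2$ points, hence every node of index $<2q_f+1$ is created only by the ``late'' blow-ups $\tilde\rho$), one gets the key inequality of Proposition~\ref{boundofxi_0prop}:
\[
\sum_{i\ge q_f}\frac{(2i+1)(2g+1-2i)}{g+1}\delta_i+\sum_{j\ge q_f}\frac{2(j+1)(g-j)}{g+1}\xi_j\ \ge\ \xi_0+\sum_{i<q_f}4i(2i+1)\delta_i+\sum_{j<q_f}2(j+1)(2j+1)\xi_j.
\]
Substituting this back eliminates $\xi_0$ and yields the stated coefficients; when $\Delta_{nc}=\emptyset$ all $\xi_j$ vanish, the Cornalba--Harris equalities alone give the $\delta_i$-coefficients in (\ref{eqnlower}), and Proposition~\ref{boundofxi_0prop} reduces exactly to (\ref{eqnlowerSigma_0=0}).

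Concretely, the gaps in your proposal are: (i) the $(g+1)$ in the denominators of (\ref{eqnlower}) and the whole shape of (\ref{eqnlowerSigma_0=0}) do not arise from Moriwaki's or Xiao's method --- they come from the fibred-double-cover constraint above, and ``rerunning Moriwaki with effective rank $g-q_f$'' will not manufacture them; (ii) your argument for (\ref{eqnlowerSigma_0=0}) via a constant abelian factor forcing isotriviality on the large side is a qualitative heuristic, whereas the paper's inequality is a purely intersection-theoretic consequence of $\hat R_h^2\le 0$ on $\hat Y$; (iii) Theorem~\ref{thmFtrivial} is logically independent of Theorem~\ref{thmlower} in the paper --- the only hypothesis needed is $q_f>0$, which already forces the fibred structure via Khashin's theorem. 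Your main-term replacement $\tfrac{4(g-1)}{g}\rightsquigarrow\tfrac{4(g-1)}{g-q_f}$ is correct and can indeed be obtained by the Harder--Narasimhan route, but for the sharp node coefficients and for (\ref{eqnlowerSigma_0=0}) you need the fibred-double-cover observation.
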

\noindent
There are two ingredients in the proof of Theorem \ref{thmFtrivial}.
The first one is  Lemma \ref{lemmapeters} on the global invariant
cycle with unitary locally constant coefficient.
The second one is Bogomolov's lemma on Kodaira dimension of an invertible subsheaf
of the sheaf of logarithmic differential forms (cf. \cite{sakai80}).
The proof of Theorem \ref{thmlower} is based on formulas given by
Cornalba and Harris (cf. \cite{ch88}).
When $q_f>0$, the observation that the smooth double cover
induced by the hyperelliptic involution is fibred (cf. Proposition \ref{fibredprop})
plays a crucial role.
\vspace{.15cm}

In order to illustrate the idea how  Theorem \ref{mainthm1} (and Theorem \ref{mainthm})
follows from the above ingredients, we just consider here the simplest case:
$U$ is non-compact (i.e., $\Delta_{nc}\neq \emptyset$)
and the logarithmic Higgs bundle associated to the family has strictly maximal Higgs field.
By \eqref{arakelovg} together with \eqref{eqnlower1}, one has\label{ideaofproof}
$$\omega_{S/B}^2\geq (2g-2)\cdot \deg\left(\Omega^1_{B}(\log\Delta_{nc})\right)
+\frac{3g-4}{g}\delta_1(\Upsilon)+\frac{7g-16}{g}\delta_h(\Upsilon).$$
Note that \eqref{eqnupper} in Theorem \ref{thmupper} is strict if $\Delta_{nc}\neq \emptyset$.
Hence by Theorem \ref{thmupper}, we have
$$\omega_{S/B}^2 < (2g-2)\cdot \deg\left(\Omega^1_{B}(\log\Delta_{nc})\right)
+2\delta_1(\Upsilon_{c})+3\delta_h(\Upsilon_{c}).$$
By the definition,
\begin{equation*}
0\leq \delta_1(\Upsilon_{c}) \leq \delta_1(\Upsilon), \quad
\text{and}\quad 0\leq \delta_h(\Upsilon_{c}) \leq \delta_h(\Upsilon).
\end{equation*}
Combining all these together, one obtains
$$0> \frac{g-4}{g}\cdot\big(\delta_1(\Upsilon)+4\delta_h(\Upsilon)\big).$$
Because both $\delta_1(\Upsilon)$ and $\delta_h(\Upsilon)$ are non-negative,
it follows that $g< 4$, i.e.,
there does not exist a Kuga family $f: S\to B$
of curves of genus $g\geq 4$ with $\Delta_{nc}\neq \emptyset$ and strictly maximal Higgs field.
\vspace{0.2cm}

The paper is organized as follows:
In Section \ref{sectionprelim}, we introduce some notations and terminology.
\\[.1cm]
In Section \ref{sectionarbitrary}, we prove
Theorems \ref{thmlower1} and \ref{thmupper} for arbitrary semi-stable families.
Theorem \ref{thmlower1} is derived from Moriwaki's sharp slope inequality
(cf. \cite[Theorem D]{moriwaki98}) together with Noether's formula.
The proof of Theorem \ref{thmupper} is based on
a generalized Miyaoka-Yau's inequality (cf. \cite[Theorem\,1.1]{miyaoka84})
and the base change technique.
\\[.1cm]
In Section \ref{sectionlowerhyper}, we prove Theorem \ref{thmlower}.
When $q_f=0$, \eqref{eqnlower} is a direct consequence of
Cornalba-Harris' formula (cf. \cite[Proposition 4.7]{ch88}) together with Noether's formula.
When $q_f>0$,
the proof starts from the observation that the double cover
$\pi:\, S \to S/\langle\sigma\rangle$ induced by the hyperelliptic involution is fibred.
As a consequence, the branched divisor of $\pi$ is very special.
And the proof of Theorem \ref{thmlower} is completed in Section \ref{subsectpfthmlower}
by combining this with Cornalba-Harris' formula.
\\[.1cm]
Theorem \ref{thmFtrivial} is proved in Section \ref{sectionflathyper}, which is based on two ingredients.
The first one is  Lemma \ref{lemmapeters} on the global invariant
cycle with unitary locally constant coefficient,
which generalizes Deligne's original theorem with the constant coefficient.
The second one comes from Bogomolov's lemma on Kodaira dimension of an invertible subsheaf
in the sheaf of logarithmic differential forms on a smooth projective surface
(cf. \cite[Lemma 7.5]{sakai80}).
\\[.1cm]
In Section \ref{sectionconclusion}, we are  in the position to prove
Theorems \ref{mainthm1} and \ref{mainthm} with the idea demonstrated on Page \pageref{ideaofproof}.
Assume $f: S\to B$ is a Kuga family of curves of genus $g$.
Then the Arakelov inequality for $f$ becomes to an equality
$$ \deg f_*\Omega^1_{S/B}(\log\Upsilon)=\frac{\rank A^{1,0}}{2}\cdot\deg\Omega^1_B(\log\Delta_{nc}).$$
Theorems \ref{thmlower1} and \ref{thmupper} together with this equality
for $\rank A^{1,0}=g$ give rise to the required bound $g\leq 4$,
which proves Theorem \ref{mainthm1}.
If $f$ is hyperelliptic,
according to Theorems \ref{thmupper} and \ref{thmlower} together with the above equality
for $\rank A^{1,0}=g-q_f$ by Theorem \ref{thmFtrivial},
we obtain inequalities of
$g$ as a rational function of the variable $g$ in the different subcases.
We prove Theorem \ref{mainthm} by showing that
$g\leq 7$ in all subcases.
\\[.1cm]
Finally in Section \ref{sectionexample}, we present two examples of Shimura curves
contained generically in the Torelli locus of hyperelliptic curves of genus $g=3$
and $4$ respectively.
In particular, the Higgs field in the example of genus $g=4$ is strictly maximal,
which shows that the bound in Theorem\,\ref{mainthm1} is optimal.

\section{Preliminaries}\label{sectionprelim}
In this section, we introduce notations and terminology that will be used in the paper.

A curve $F$ is called {\it semi-stable} (resp. {\it stable}) if it is a reduced nodal curve,
and every smooth rational component intersects
the rest part of $F$ at least two (resp. three) points.
A morphism $f:\,S\to B$
is called a {\it semi-stable family} (resp. {\it stable family}) of curves of genus $g$,
if $f$ is a morphism from a projective surface $S$ to a smooth projective curve $B$
with connected fibres,
the general fibre is a connected nonsingular complex projective curve of genus $g$,
and all the singular fibres of $f$ are semi-stable (resp. stable).
In this paper, when we talk about a semi-stable family $f:\,S \to B$ as above,
we always assume the total surface $S$ is smooth.
If the general fibre of $f$ is a hyperelliptic curve,
then we call $f$ a {\it hyperelliptic family}.
$f$ is called {\it smooth} if all its fibres are smooth,
{\it isotrivial} if all its smooth fibres are isomorphic to each other.
$f$ is called {\it relatively minimal},
if there is no $(-1)$-curve contained in fibres of $f$.
Here a curve $C$ is called a {\it $(-k)$-curve}
if it is a smooth rational curve with self-intersection $C^2=-k$.
Note that by definition, $f$ is relatively minimal if $f$ is semi-stable.

Let $\omega_S$ (resp. $\omega_B$) be the canonical sheaf of $S$ (resp. $B$).
Denote by $\omega_{S/B}=\omega_S\otimes f^*\omega_B^{\vee}$ the
relative canonical sheaf of $f$.
let $b=g(B)$, $p_g=h^0(S,\,\omega_S)$, $q=h^0(S,\,\Omega_S^1)$,
$\chi(\mathcal O_S)=p_g-q+1$, and $\chit(X)$ the topological Euler characteristic of a variety $X$,
where $\Omega_S^1$ is the differential sheaf of $S$.
For a semi-stable family $f:\,S \to B$ of genus $g\geq 2$
with singular fibres $\Upsilon/\Delta$,
we consider the following relative invariants:
\begin{equation}\label{defofrelativeinv}
\left\{
\begin{aligned}
&\omega_{S/B}^2=\omega_S^2-8(g-1)(b-1),\\
&\delta_f=\chit(S)-4(g-1)(b-1)=\sum_{F\in \Upsilon}\delta(F),\\
&\deg f_*\Omega^1_{S/B}(\log\Upsilon)=\chi(\mathcal O_S)-(g-1)(b-1),
\end{aligned}\right.
\end{equation}
where
\begin{equation*}
\delta(F)=\chit(F)+(2g-2).
\end{equation*}
They satisfy the Noether's formula:
\begin{equation}\label{formulanoether}
12\deg f_*\Omega^1_{S/B}(\log\Upsilon)=\omega_{S/B}^2+\delta_f.
\end{equation}
These invariants are nonnegative. And $\deg f_*\Omega^1_{S/B}(\log\Upsilon)=0$
(equivalently, $\omega_{S/B}^2=0$)
if and only if $f$ is isotrivial.
Note that for a singular fibre $F$, $\delta(F)$ is also equal to the number of nodes contained $F$.
Hence $\delta_f=0$ iff $f$ is smooth, in which case,
$f$ is called a {\it Kodaira family} if moreover $f$ is non-isotrivial.

By contracting all $(-2)$-curves contained in fibres of a semi-stable family
$f:\,S \to B$, one gets a stable family $f^{\#}:\,S^{\#} \to B$,
$$\xymatrix{
  S \ar[rr] \ar[dr]_-{f}
                &  &    S^{\#}  \ar[dl]^-{f^{\#}}   \\
                & B                 }$$
In this case, of course, $S^{\#}$ is not necessarily smooth.
For any singular point $q$ of $S^{\#}$,
$(S^{\#},\,q)$ is a rational double point of type $A_{\lambda_q}$,
here $\lambda_q$ is the number of $(-2)$-curves in $S$ over $q$.

For each singular fibre $F$ of a semi-stable family $f:\, S \to B$ of genus $g\geq 2$,
we define $\delta_i(F)$ for $0\leq i \leq [g/2]$ in the following way.
A singular point $q$ of $F$ is said to be {\it of type $i$} ($0\leq i \leq [g/2]$)
if its partial normalization at $q$ consists of two connected components of arithmetic genera $i$
and $g-i$ for $i> 0$, and is connected for $i=0$.
Then $\delta_i(F)$ is the number of singular points in $F$ of type $i$.
We call also define $\delta_i(F)$ according to its stable model $F^{\#}\subseteq S^{\#}$.
To do this, first we similarly define singular points of type $i$ as before.
Around a singular point $q\in F^{\#}$ in $S^{\#}$,
locally $S^{\#}$ is of the form $xy=t^{m_q}$, where $t$ is a local coordinate of $B$ around $f^{\#}(q)$.
We call $m_q$ is the {\it multiplicity} of $q$.
Then $\delta_i(F)$ is the number of singular points of $F^{\#}$ of type $i$ counting multiplicity.
We remark that $(S^{\#},\,q)$ is a rational double point of type $A_{m_q-1}$,
if $m_q>1$ is the multiplicity of $q$.

Let $\Upsilon\to \Delta$
denote the singular fibres,
$\Upsilon_{c}\to \Delta_{c}$ denote those singular fibres with compact Jacobians, and
$\Upsilon_{nc}\triangleq \Upsilon\setminus \Upsilon_{c} \to \Delta_{nc}\triangleq\Delta\setminus\Delta_{c}$
correspond to singular fibres with non-compact Jacobians.
Define $\delta_h(F)=\sum\limits_{i=2}^{[g/2]} \delta_i(F)$, and
\begin{equation}\label{formulaofdelta_i}
\left\{
\begin{aligned}
&\delta_i(\Upsilon)=\sum_{F\in \Upsilon} \delta_i(F),\qquad
\delta_i(\Upsilon_c)=\sum_{F\in \Upsilon_c} \delta_i(F),\qquad
\delta_i(\Upsilon_{nc})=\sum_{F\in \Upsilon_{nc}} \delta_i(F).\\
&\delta_h(\Upsilon)=\sum_{i=2}^{[g/2]} \delta_i(\Upsilon),\qquad
\delta_h(\Upsilon_c)=\sum_{i=2}^{[g/2]} \delta_i(\Upsilon_c).
\end{aligned}
\right.
\end{equation}
Then
\begin{equation}\label{formulaofdelta_f}
\left\{
\begin{aligned}
\delta(F)&=\sum_{i=0}^{[g/2]}\delta_i(F)=\delta_0(F)+\delta_1(F)+\delta_h(F),\\
\delta_f&=\sum_{i=0}^{[g/2]}\delta_i(\Upsilon)=\delta_0(\Upsilon)+\delta_1(\Upsilon)+\delta_h(\Upsilon).
\end{aligned}
\right.
\end{equation}

Let $\calm_g$ (resp. $\ol \calm_g$) be the moduli space of smooth (resp. stable) complex curves of genus $g$.
By \cite{dm69},
the boundary $\ol \calm_g \setminus \calm_g$
is of codimensional one and has $[g/2]+1$ irreducible components,
saying, $\Delta_0,\,\Delta_1,\,\cdots,\,\Delta_{[g/2]}$.
The geometrical meaning of the index is as follows.
A general point of $\Delta_i$ ($i>0$) corresponds to a stable curve consisting of a curve of genus $i$
and a curve of genus $g-i$ joint at one point,
and a general point of $\Delta_0$ represents an irreducible stable curve with one node.
These boundaries define divisor classes
$\left\{\Delta_0,\,\Delta_1,\,\cdots,\,\Delta_{[g/2]}\right\}\in\Pic(\ol \calm_g)\otimes\mathbb Q$.
There is also a natural class $\lambda \in \Pic(\ol \calm_g)\otimes\mathbb Q$, called the Hodge class.
A non-isotrivial semi-stable family $f:\, S \to B$ of curves of genus $g\geq 2$ determines
a non-constant morphism $\varphi:\, B \to \ol M_g$.
Then one has (cf. \cite{dm69})
\begin{equation}\label{modulidelta}
\deg f_*\Omega^1_{S/B}(\log\Upsilon)=\deg \varphi^*(\lambda),\qquad
\delta_i(\Upsilon)=\deg \varphi^*(\Delta_i).
\end{equation}

\vspace{0.2cm}
For a singular points in a stable hyperelliptic curve $F^{\#}$,
we have a more detail description by using the induced double cover.
To see this, first note that $F^{\#}$ has a semi-stable model $F$
which is an admissible double cover (cf. \cite{ch88} or \cite{harrismumford82})
of a stable $(2g+2)$-pointed noded curve $\Gamma$ of arithmetic genus zero.
Let $\psi:\,F\to \Gamma$ be the covering map, and let $p$ be a singular point of $\Gamma$.
The complement of $p$ has two connected components $\Gamma'$ and $\Gamma''$,
so the set of marked points of $\Gamma$ breaks up into two subsets:
those lying on $\Gamma'$ and those lying on $\Gamma''$;
let $\alpha$ and $2g+2-\alpha\geq \alpha$ be the orders of these two subsets.
Following \cite[{P$_{467}$}]{ch88}, $\alpha$ is called the {\it index} of the point $p$.
Note that $\alpha\geq 2$.
If $p$ has odd index $\alpha=2k+1$, then $\psi$ must be branched at $p$,
and the unique singular point $q$ lying above $p$ is a singular point of type $k$.
Suppose that the index $\alpha=2k+2$ is even, then $\psi$ is unbranched at $p$,
so $\psi^{-1}(p)$ consists of two points $q'$ and $q''$,
and $\psi^{-1}(\Gamma')$ and $\psi^{-1}(\Gamma'')$ are semi-stable hyperelliptic curves of genera
$k$ and $g-k-1$, joint at $q'$ and $q''$.
In particular, both $q'$ and $q''$ are singular points of type $0$.

Let $f:\, S \to B$ be a semi-stable family of hyperelliptic curves,
and $F$ a singular fibre.
Let $\wt B\to B$ be a base change of degree $d$ and totally branched over $f(F)$,
$\tilde f:\,\wt S \to \wt B$ the corresponding semi-stable family,
and $\wt F$ the pre-image of $F$.
If $d$ is sufficiently large,
then we see that $\wt F$ is an admissible double cover
of a stable $(2g+2)$-pointed noded curve $\wt\Gamma$ of arithmetic genus zero.
Let $\xi_0(\wt F)$ equal to two times the number of singular points of $\wt \Gamma$ of index $2$,
and $\xi_j(\wt F)$ equal to the number of singular points of $\wt \Gamma$ of index $2j+2$
for $1\leq j\leq [(g-1)/2]$.
Define
$$\xi_j(F)=\frac{\xi_j(\wt F)}{d},\qquad \forall~0\leq j\leq [(g-1)/2].$$
It is clear that $\xi_j(F)$'s are independent on the choices of the base change,
and hence well-defined invariants.
And
$$\delta_0(F)=\xi_0(F)+2\sum_{j=1}^{[(g-1)/2]}\xi_j(F).$$
We also define
$$\xi_j(\Upsilon)=\sum_{F\in\Upsilon}\xi_j(F),\qquad \forall~0\leq j\leq [(g-1)/2].$$

Let $\calh_g\subseteq \calm_g$ (resp. $\ol \calh_g\subseteq \ol \calm_g$)
be the moduli space of smooth (resp. stable)
hyperelliptic complex curves of genus $g$.
The above discussion shows that the intersection of $\Delta_i$ and $\ol \calh_g$ ($i>0$)
is still an irreducible divisor (see \cite{ch88} for more details).
By abuse of notations, we still denote it by $\Delta_i$.
The intersection of $\Delta_0$ and $\ol \calh_g$, however, is reducible.
Let $\Xi_j$ be the locus of all curves in $\ol \calh_g$ such that
the corresponding marked pointed noded curve $\Gamma$ of arithmetic genus zero
has a singular point of index $2j+2$.
Then (cf. \cite{ch88})
$$\Delta_0\cap \ol \calh_g=\Xi_0\cup \Xi_1\cup\cdots\cup \Xi_{[(g-1)/2]}\,.$$
A general point in $\Xi_0$ corresponds to an irreducible stable hyperelliptic curve with one node.
A general point of $\Xi_j$ ($1\leq j\leq [(g-1)/2]$) represents a stable curve
consisting of a  hyperelliptic curve of genus $j$
and a hyperelliptic curve of genus $g-j-1$ joint at two points.
As divisors (cf. \cite{ch88}),
$$h^*\left(\Delta_0\right)=\Xi_0+2\sum_{j=1}^{[(g-1)/2]}\Xi_j\,,\qquad
\text{where~$h:\,\ol \calh_g\hookrightarrow \ol \calm_g$~is the embedding.}$$
Hence if $f:\,S\to B$ is a non-isotrivial
semi-stable family of hyperelliptic curves of genus $g$ with singular locus
$\Upsilon/\Delta$, and $\varphi:\, B \to \ol \calh_g$ the induced map,
then
\begin{equation}\label{relationdeltaxi}
\left\{
\begin{aligned}
\xi_j(\Upsilon)   &=\deg \varphi^*(\Xi_j),&&\forall~0\leq j\leq [(g-1)/2];\\
\delta_0(\Upsilon)&=\deg \varphi^*(\Xi_0)+2\sum_{j=1}^{[(g-1)/2]}\deg \varphi^*(\Xi_j);&&\\
\delta_i(\Upsilon)&=\deg \varphi^*(\Delta_i),&\quad&\forall~1\leq i\leq [g/2].
\end{aligned}\right.
\end{equation}

\section{Bounds of $\omega_{S/B}^2$ for arbitrary families}\label{sectionarbitrary}
\subsection{Lower bound of $\omega_{S/B}^2$}
The subsection aims to prove Theorem \ref{thmlower1}.
It follows directly from Moriwaki's sharp slope inequality (cf. \cite[Theorem D]{moriwaki98})
together with Noether's formula \eqref{formulanoether}.
\begin{proof}[Proof of Theorem {\rm\ref{thmlower1}}]~
From \cite[Theorem D]{moriwaki98}, \eqref{formulanoether} and \eqref{formulaofdelta_f}, it follows that
$$\begin{aligned}
(8g+4)\deg \left(f_*\Omega^1_{S/B}(\Upsilon)\right)&\,\geq\,
g\delta_0(\Upsilon)+\sum_{i=1}^{[g/2]}4i(g-i)\delta_i(\Upsilon)\\
&=\,g\delta_f+\sum_{i=1}^{[g/2]}\big(4i(g-i)-g\big)\delta_i(\Upsilon)\\
&=\,g\left(12\deg \left(f_*\Omega^1_{S/B}(\Upsilon)\right)-\omega_{S/B}^2\right)
+\sum_{i=1}^{[g/2]}\big(4i(g-i)-g\big)\delta_i(\Upsilon).
\end{aligned}$$
Hence
\begin{eqnarray*}
\omega_{S/B}^2&\geq&\frac{4(g-1)}{g}\cdot\deg \left(f_*\Omega^1_{S/B}(\Upsilon)\right)
+\sum_{i=1}^{[g/2]}\frac{4i(g-i)-g}{g}\delta_i(\Upsilon)\\
&\geq&\frac{4(g-1)}{g}\cdot\deg \left(f_*\Omega^1_{S/B}(\Upsilon)\right)
+\frac{3g-4}{g}\delta_1(\Upsilon)+\frac{7g-16}{g}\delta_h(\Upsilon).
\end{eqnarray*}
\end{proof}

\subsection{Upper bound of $\omega_{S/B}^2$}
The purpose of this subsection is to prove Theorem \ref{thmupper}.
The proof is based on a generalized Miyaoka-Yau's inequality (see Theorem \ref{theoreminpfupper1} below).
What we do is to take a suitable base change and to
choose suitable components contained in singular fibres (but not the entire singular fibres).

First we recall the generalized Miyaoka-Yao's theorem (cf. \cite{miyaoka84}).

Let $X_x$ the germ of a quotient singularity of $(\mathbb C^2/G_x)_0$ (in the analytic sense),
where $G_x$ is a finite subgroup of ${\rm GL}(2,\mathbb C)$ with the origin $0$ being its unique fixed point.
Let $\ol X_E$ be the minimal resolution of $X_x$ and $E$ the exceptional divisor
(= the inverse image of $x$). Let
\begin{equation}\label{eqninpfupper1}
v(x)\triangleq \chit(E)-\frac{1}{|G_x|}.
\end{equation}
\begin{theorem}[Miyaoka\,\,{\cite[Theorem\,1.1]{miyaoka84}}]\label{theoreminpfupper1}
Let $X^{\#}$ be a projective surface with only quotient singularities, and $\Lambda$ the
singular locus of $X^{\#}$. Let $D$ be a reduced normal crossing curve
which lies on the smooth part of $X^{\#}$. Let $\ol X$
be the minimal resolution of $X^{\#}$ and $E\subseteq \ol X$
the inverse image of $\Lambda$ (with reduced structure).
Assume the negative part in the Zariski decomposition of $\omega_{\ol X} + D + E$ has the form
$N + N'$ such that ${\rm supp\,}N$ is disjoint with $E$ and ${\rm supp\,}N' \subseteq E$.
Then
$$\sum_{x\in \Lambda}v(x) \leq \chit(\ol X)-\chit(D)-
\frac13(\omega_{\ol X}+E+ D)^2 +\frac13 (N')^2 +\frac1{12} N^2.$$
\end{theorem}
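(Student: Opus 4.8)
The plan is to prove this as a logarithmic, orbifold version of the Bogomolov--Miyaoka--Yau inequality, applied to the positive part of the Zariski decomposition of the log canonical class $L:=\omega_{\ol X}+D+E$. Write this decomposition as $L=P+N+N'$, where $P$ is nef, $P\cdot N=P\cdot N'=0$, and $N+N'$ is the negative definite part with ${\rm supp}\,N$ disjoint from $E$ and ${\rm supp}\,N'\subseteq E$ as in the hypothesis. Because the supports of $N$ and $N'$ are disjoint one has $N\cdot N'=0$, so $L^2=P^2+N^2+(N')^2$; thus the whole statement can be rephrased as a single inequality between $P^2$, the negative self-intersections $N^2,(N')^2$, and an ``orbifold second Chern number'' $\hat c_2$ of $X^{\#}$. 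First I would carry out this reformulation and reduce the theorem to one BMY-type bound for $P$.

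Next I would identify the sheaf carrying the semistability. On the resolution $\ol X$, which is smooth and on which $D+E$ is a normal crossing divisor, consider the logarithmic cotangent sheaf $\mathscr{E}:=\Omega^1_{\ol X}(\log(D+E))$, a locally free rank-two sheaf with $\det\mathscr{E}=\omega_{\ol X}(D+E)$, so that $c_1(\mathscr{E})=L$. Since $D$ lies on the smooth locus we have $D\cap E=\emptyset$, hence its logarithmic second Chern number is $c_2(\mathscr{E})=\chit(\ol X)-\chit(D)-\chit(E)$. The key structural input is the $P$-semistability of $\mathscr{E}$. Here I would invoke Bogomolov's lemma (cf. \cite{sakai80}): every invertible subsheaf $\mathscr{L}\hookrightarrow\mathscr{E}$ satisfies $\kappa(\mathscr{L})\le 1$, i.e. is not big. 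Combining non-bigness with the Hodge index theorem applied to the nef class $P$ bounds the slope $\mathscr{L}\cdot P\le\tfrac12\,L\cdot P$, which is precisely $P$-semistability.

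With semistability in hand I would apply Bogomolov's inequality for semistable rank-two sheaves, in the refined form available for (log) cotangent bundles, to the positive part; this produces the desired inequality between $P^2$ and $c_2(\mathscr{E})$. The remaining work is local: at each quotient singularity $x$ I must replace the exceptional contribution $\chit(E_x)$ in $c_2(\mathscr{E})$ (writing $E_x$ for the part of $E$ lying over $x$, so that $\chit(E)=\sum_{x\in\Lambda}\chit(E_x)$) by the orbifold correction $1/|G_x|$, since the orbifold Euler number counts $x$ as $1/|G_x|$ of a smooth point. The defect is exactly $\chit(E_x)-1/|G_x|=v(x)$, so that $\hat c_2=\chit(\ol X)-\chit(D)-\sum_{x\in\Lambda}v(x)$, and rearranging the Chern-class inequality isolates $\sum_{x\in\Lambda}v(x)$ on the left. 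This local step rests on the classification of two-dimensional quotient singularities and on computing $\chit$ of their resolution graphs.

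The main obstacle I anticipate is twofold, and it is exactly where the asymmetric coefficients $\tfrac13$ and $\tfrac1{12}$ appear. First, establishing $P$-semistability when $P$ is only nef, and possibly not big, is delicate: the Hodge-index argument degenerates along $P^{\perp}$, and destabilizing subsheaves concentrated near $N$ and near $N'$ must be controlled separately. Second, tracking the two negative parts through the Chern-class inequality accounts for the different constants: the part $N'$ living on the exceptional locus is absorbed optimally into the orbifold $\hat c_2$ (coefficient $\tfrac13$), whereas the part $N$ away from $E$ can only be handled by a coarser application of Bogomolov's bound, leaving the weaker coefficient $\tfrac1{12}$ (numerically $\tfrac13-\tfrac14$, reflecting the loss between the $3c_2$ and $4c_2$ estimates). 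Pinning down these exact constants, rather than settling for the cleaner but here unattainable $P^2\le 3\hat c_2$ in the presence of $N$, is the technical heart of the argument.
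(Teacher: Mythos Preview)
The paper does not prove this theorem at all: it is quoted verbatim as Miyaoka's result \cite[Theorem\,1.1]{miyaoka84} and then immediately specialized (in Theorem~\ref{theoreminpfupper2}) and applied in the proof of Theorem~\ref{thmupper}. There is therefore nothing in the paper to compare your proposal against; you have attempted to reconstruct Miyaoka's original argument, which lies outside the scope of this paper.

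That said, your sketch follows the correct architecture of Miyaoka's proof: Zariski-decompose $L=\omega_{\ol X}+D+E$, establish $P$-semistability of $\Omega^1_{\ol X}(\log(D+E))$ via Bogomolov's lemma, apply the Bogomolov inequality $c_1^2\leq 4c_2$ (or its refinement $c_1^2\leq 3c_2$ in the presence of additional structure), and reinterpret the exceptional contribution through the orbifold Euler number to produce the defects $v(x)$. Your explanation of why $N'$ (supported on $E$) earns the sharper coefficient $\tfrac13$ while $N$ (away from $E$) only gets $\tfrac1{12}$ is morally right, though in Miyaoka's actual argument the distinction arises more concretely: near each quotient singularity one can work on the local uniformizing cover $\mathbb{C}^2\to\mathbb{C}^2/G_x$, where the pulled-back log cotangent sheaf becomes genuinely flat, so the full Miyaoka--Yau constant $3$ is available and $(N')^2$ is absorbed; away from $E$ no such uniformization exists and only the weaker Bogomolov bound $c_1^2\leq 4c_2$ applies, whence the residual $\tfrac14 N^2$ and the net coefficient $\tfrac13-\tfrac14=\tfrac1{12}$. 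If you intend to include a proof, you should cite and follow \cite{miyaoka84} directly rather than present this as original; for the purposes of the present paper, simply citing the statement (as the authors do) is appropriate.
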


If $X^{\#}$ contains at most rational singularities of type $A$ (cf. \cite[\S\,III-3]{bhpv}),
$\ol X$ is minimal and of general type, and $D$ is composed of some disjoint smooth elliptic curves,
then $\chit(D)=0$ and the negative part in the Zariski decomposition of $\omega_{\ol X}+D+E$ is just $E$,
which is some chains of $(-2)$-curves.
Hence in this case, $$\left(\omega_{\ol X}+D+E\right)^2=(\omega_{\ol X}+D)^2+E^2.$$
Note that for a singularity $x$ of type $A_k$,
the invariant $v(x)$ defined in \eqref{eqninpfupper1} is equal to $(k+1)-\frac{1}{k+1}$.
Therefore we get
\begin{theorem}\label{theoreminpfupper2}
Let conditions be the same as that of Theorem $\ref{theoreminpfupper1}$.
Assume that each point $x\in \Lambda$
is a quotient singularities of type $A_{k_x}$,
$\ol X$ is minimal and of general type, and $D$ is composed of some disjoint smooth elliptic curves.
Then
\begin{equation*}
\sum_{x\in \Lambda}\left((k_x+1)-\frac{1}{k_x+1}\right)
\leq \chit(\ol X)-\frac13(\omega_{\ol X}+ D)^2.
\end{equation*}
\end{theorem}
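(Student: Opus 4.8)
\emph{Proof strategy.} Since Theorem \ref{theoreminpfupper2} inherits all the hypotheses of Theorem \ref{theoreminpfupper1}, the plan is simply to evaluate, under the extra assumptions, the quantities $v(x)$, $\chit(D)$, $(N')^2$, $N^2$ and $(\omega_{\overline X}+D+E)^2$ occurring in Miyaoka's inequality, and to check that they combine into the asserted bound. First I would use the explicit geometry of an $A_{k_x}$-singularity: it is analytically $\mathbb{C}^2/\mathbb{Z}_{k_x+1}$, so $|G_x|=k_x+1$, and its minimal resolution replaces it by a chain of $k_x$ smooth rational $(-2)$-curves, a curve of topological Euler characteristic $2k_x-(k_x-1)=k_x+1$. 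Hence, by \eqref{eqninpfupper1},
\[
v(x)=(k_x+1)-\frac{1}{k_x+1}\qquad\text{for every }x\in\Lambda,
\]
and the total exceptional divisor $E\subset\overline X$ is a disjoint union of such $A$-chains, so its intersection form is negative definite. Also $\chit(D)=0$, since $D$ is a disjoint union of smooth elliptic curves.

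The heart of the argument is to pin down the Zariski decomposition of $\omega_{\overline X}+D+E$ (which is big, as $\overline X$ is of general type). Since $\overline X$ is minimal of general type, $\omega_{\overline X}$ is nef, and I would first check that $\omega_{\overline X}+D$ is still nef: for an irreducible curve $\Gamma$ not a component of $D$ one has $(\omega_{\overline X}+D)\cdot\Gamma\geq\omega_{\overline X}\cdot\Gamma\geq0$, while for a connected component $E_j$ of $D$ — a smooth elliptic curve meeting $D-E_j$ trivially — adjunction gives $(\omega_{\overline X}+D)\cdot E_j=\omega_{\overline X}\cdot E_j+E_j^2=2g(E_j)-2=0$. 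On the other hand, $E$ is negative definite and, for each of its components $C$ (a $(-2)$-curve), adjunction gives $\omega_{\overline X}\cdot C=0$, while $D\cdot C=0$, so $(\omega_{\overline X}+D)\cdot C=0$. Therefore $\omega_{\overline X}+D+E=(\omega_{\overline X}+D)+E$ is exactly the Zariski decomposition, with nef part $\omega_{\overline X}+D$ and negative part $E$; in the notation of Theorem \ref{theoreminpfupper1} this forces $N=0$ and $N'=E$. In particular $(\omega_{\overline X}+D)\cdot E=0$, whence
\[
(\omega_{\overline X}+D+E)^2=(\omega_{\overline X}+D)^2+2(\omega_{\overline X}+D)\cdot E+E^2=(\omega_{\overline X}+D)^2+E^2.
\]

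Substituting $v(x)=(k_x+1)-\frac{1}{k_x+1}$, $\chit(D)=0$, $N=0$, $N'=E$ and the last identity into the inequality of Theorem \ref{theoreminpfupper1} would give
\[
\sum_{x\in\Lambda}\left((k_x+1)-\frac{1}{k_x+1}\right)\leq\chit(\overline X)-\frac13\big((\omega_{\overline X}+D)^2+E^2\big)+\frac13E^2=\chit(\overline X)-\frac13(\omega_{\overline X}+D)^2,
\]
which is the assertion. I expect the only genuinely delicate point to be the Zariski-decomposition step: one must be certain that neither the elliptic components of $D$ nor any other curve is pushed into the negative part, i.e. that $\omega_{\overline X}+D$ really is nef. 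This is precisely where the two extra hypotheses enter — "minimal of general type" gives nefness of $\omega_{\overline X}$ (with $(-2)$-curves the only irreducible curves orthogonal to it), and "$D$ a disjoint union of smooth elliptic curves" makes adjunction force $(\omega_{\overline X}+D)\cdot E_j=0$. Everything else is bookkeeping with the shape of the $A_k$-resolution and with $\chit(\text{smooth elliptic curve})=0$.
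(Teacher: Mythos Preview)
Your proposal is correct and follows essentially the same approach as the paper: compute $v(x)=(k_x+1)-\frac{1}{k_x+1}$ for an $A_{k_x}$-singularity, observe $\chit(D)=0$, identify the Zariski decomposition of $\omega_{\overline X}+D+E$ as $(\omega_{\overline X}+D)+E$ (so $N=0$, $N'=E$), deduce $(\omega_{\overline X}+D+E)^2=(\omega_{\overline X}+D)^2+E^2$, and substitute into Miyaoka's inequality. The paper states the Zariski-decomposition step without justification, whereas you supply the nefness check for $\omega_{\overline X}+D$ explicitly; your use of the hypothesis that $D$ lies on the smooth part of $X^{\#}$ (hence $D\cdot E=0$) is the right ingredient there.
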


We are now able to prove Theorem \ref{thmupper}.
\begin{proof}[Proof of Theorem {\rm\ref{thmupper}}]~
We use the notations introduced in Sections \ref{sectionintroduction} and \ref{sectionprelim}.

Consider first the case $g=2$.
In this case, $f$ is a semi-stable hyperelliptic family,
whence $\Upsilon\neq \emptyset$;
otherwise, from \cite[Proposition 4.7]{ch88}, it follows that
$\deg f_*\big(\Omega_{S/B}^1(\log\Upsilon)\big)=0$,
which is impossible, since $f$ is assumed to be non-isotrivial.
Note that for a singular fibre $F \in \Upsilon_c$,
$\delta_1(F)\geq 1$.
Hence $\delta_1(\Upsilon_c)\geq \#(\Delta_c)$.
Therefore when $g=2$,
our theorem is a direct consequence of the strict canonical class inequality (cf. \cite{tan95}):
\begin{eqnarray*}
K_f^2&<&(2g-2)\cdot \deg \big(\Omega^1_B(\log \Delta)\big)\\
&=&2\deg \big(\Omega^1_B(\log \Delta_{nc})\big)+2\cdot\#(\Delta_c)\\
&\leq& 2\deg \big(\Omega^1_B(\log \Delta_{nc})\big)+2\delta_1(\Upsilon_c).
\end{eqnarray*}

In the rest part of the proof, we assume $g\geq 3$.

Let $s_c=\#\Sigma_{c}$, $s_{nc}=\#\Sigma_{nc}$.
For any $p\in \Delta$, let $F_p=f^{-1}(p)$,
and
\begin{eqnarray}
\hspace{-0.6cm}E_p&\hspace{-0.2cm}=&\hspace{-0.2cm}
\left\{\sum_{j} E_{p,j}~\,\Big|~ E_{p,j}\subseteq F_p \text{~is a~}
(-2)\text{-curve }\right\},\quad \forall~p\in\Delta;\label{miyaoka13}\\[.1cm]
\hspace{-0.6cm}D_p&\hspace{-0.2cm}=&\hspace{-0.2cm}\left\{\sum_{j} D_{p,j}~\Bigg|~
\begin{aligned}
&D_{p,j}\subseteq F_p \text{~is a smooth elliptic curve such that}\\
&D_{p,j}\cdot E_p=0, \text{~and~}D_{p,j}^2=-1
\end{aligned}\right\}, \quad\forall~p\in\Delta_c.~~\label{miyaoka14}
\end{eqnarray}

Let $S \to S^{\#}$ be the contraction of $\sum\limits_{p\in\Delta} E_p\subseteq S$,
and $f^{\#}:\,S^{\#} \to B$ the induced morphism.
Then $f^{\#}:\,S^{\#} \to B$ is nothing but the stable model of $f$.
Note also that, for any $p\in\Delta_c$, the image of $D_p$ on $S^{\#}$ lies on
the smooth part of ${S^{\#}}$,
which we still denote by $D_p$.
For any singular point $q$ of $S^{\#}$,
$(S^{\#},\,q)$ is a rational double point of type $A_{\lambda_q}$,
here $\lambda_q$ is the number of $(-2)$-curves in $S$ over $q$.
For convenience, we also denote by $q$ the singular point of the fibres on the smooth
part of $S^{\#}$, in which case, $\lambda_q=0$.
So a singular point $(S^{\#},\,q)$ of type $A_0$ is understood as
a node of the fibres but a smooth point of $S^{\#}$.
Let $F^{\#}_p$ be the image of $F_p$ on $S^{\#}$ for $p\in \Delta$,
and
$$\Upsilon^{\#}=\sum_{p\in \Delta} F^{\#}_p,\qquad
\Upsilon_{c}^{\#}=\sum_{p\in \Delta_c} F^{\#}_p,\qquad
\Upsilon_{nc}^{\#}=\sum_{p\in \Delta_{nc}} F^{\#}_p.$$
Then
\begin{equation}\label{miyaoka11}
\delta(F_p)=\sum_{i=0}^{[g/2]}\delta_i(F_p)
=\sum_{q\in F^{\#}_p}(\lambda_q+1),\qquad \forall~p\in \Delta.
\end{equation}
We claim
\begin{claim}\label{e+Dgeqdelta1}
For each $p\in \Delta_c$,
$D_p$ is smooth (not necessary irreducible), and
\begin{equation}\label{miyaoka10}
\sum_{q\in F_{p}^{\#}\atop{\lambda_q>0}}(\lambda_q+1)+|D_p| \geq \delta_1(F_p),
\end{equation}
where~$|D_p|$~is the number of irreducible components of $D_p$.
\end{claim}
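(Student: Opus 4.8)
\textbf{Proof plan for Claim~\ref{e+Dgeqdelta1}.}
The plan is to make decisive use of the hypothesis $p\in\Delta_c$: then $F_p$ has compact Jacobian, hence is of compact type, so its dual graph (and that of its stable model $F_p^{\#}$) is a tree, $\delta_0(F_p)=0$, and every node is separating. This is exactly what forbids the two configurations that would otherwise break \eqref{miyaoka10} — a self-node on an irreducible component, or a genus-one ``tail'' whose stabilisation is a nodal cubic — so the argument hinges on it. As a first consequence one gets the smoothness of $D_p$: each component $D_{p,j}$ is by definition a smooth elliptic curve inside $F_p$ with $D_{p,j}^2=-1$, and since $D_{p,j}\cdot F_p=0$ we get $D_{p,j}\cdot(F_p-D_{p,j})=1$, so $D_{p,j}$ meets the rest of $F_p$ transversally at a single point. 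If two such components met, that point would be the only edge of each in the dual tree, so together they would form a whole connected component of $F_p$, hence all of $F_p$, forcing $g=2$, contrary to $g\geq 3$; thus the $D_{p,j}$ are pairwise disjoint and $D_p$ is smooth.

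Next I would establish a bijection between the irreducible components of $D_p$ and the type-$1$ nodes $q$ of $F_p^{\#}$ with $\lambda_q=0$ (equivalently, of multiplicity $m_q=\lambda_q+1=1$). Given a component $E_0$ of $D_p$: as above it meets $F_p-E_0$ at one point $\tilde q$, and since $F_p$ is of compact type, deleting $\tilde q$ separates $F_p$ into $E_0$ (genus $1$) and a curve of genus $g-1$, so the image $q\in S^{\#}$ of $\tilde q$ is a type-$1$ node; and $E_0\cdot E_p=0$ forces the two branches at $\tilde q$ to lie on non-$(-2)$-curves, so no $(-2)$-curve is contracted to $q$, i.e. $\lambda_q=0$. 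Conversely, a type-$1$ node $q$ with $\lambda_q=0$ cuts off a connected subcurve $\tilde E'\subseteq F_p$ of arithmetic genus $1$, itself of compact type, so its stabilisation is a \emph{smooth} elliptic curve; since $\lambda_q=0$ there is no chain of $(-2)$-curves joining $\tilde E'$ to the rest of $F_p$, while any further $(-2)$-curve inside $\tilde E'$ would be a rational leaf of the fibre meeting the rest in one point (impossible by semistability) and a tree has no loops; hence $\tilde E'$ is a single smooth elliptic curve $E_0$ with $E_0\cdot(F_p-E_0)=1$, so $E_0^2=-1$, $E_0\cdot E_p=0$, i.e. $E_0\in D_p$. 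These two assignments are mutually inverse, whence $|D_p|=\#\{q\in F_p^{\#}:q\text{ of type }1,\ \lambda_q=0\}$.

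Finally, $\delta_1(F_p)$ is the number of type-$1$ nodes of $F_p^{\#}$ counted with multiplicity $m_q=\lambda_q+1$, so splitting the count according to whether $\lambda_q=0$ or $\lambda_q>0$ and inserting the identity just obtained gives
\[
\delta_1(F_p)=\#\{q\text{ of type }1,\ \lambda_q=0\}+\sum_{q\text{ of type }1,\ \lambda_q>0}(\lambda_q+1)\ \leq\ |D_p|+\sum_{q\in F_p^{\#},\ \lambda_q>0}(\lambda_q+1),
\]
which is \eqref{miyaoka10}. The hard part is the bijection in the middle step: one must use compact type twice (to know a genus-one tail stabilises to a smooth elliptic curve, and to control the whole dual graph of $F_p$) and keep careful track of which nodes absorb a chain of $(-2)$-curves — these are the ones with $\lambda_q>0$, already accounted for in the first sum — as against the honest nodes on the smooth locus of $S^{\#}$, which are exactly those matched with the $(-1)$-elliptic components of $D_p$.
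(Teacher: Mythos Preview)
Your approach is essentially the same as the paper's: both use that $p\in\Delta_c$ forces $F_p^{\#}$ to be a tree of smooth curves, identify the genus-$1$ side of a type-$1$ node as a single smooth elliptic curve $D_q$, and split the count of type-$1$ nodes according to whether $m_q=\lambda_q+1$ equals $1$ or exceeds $1$. The paper works directly in the stable model $F_p^{\#}$ and only proves the injection (each type-$1$ node with $m_q=1$ contributes a component of $D_p$), which already suffices for \eqref{miyaoka10}; you go further and establish the full bijection, which is correct but not needed for the inequality.

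One genuine wrinkle in your converse direction: the assertion that ``any further $(-2)$-curve inside $\tilde E'$ would be a rational leaf of the fibre meeting the rest in one point'' is false as written --- a $(-2)$-curve in a semistable fibre always meets the rest in exactly two points, so it is never a leaf. The conclusion you want (that $\tilde E'$ contains no $(-2)$-curves) is nonetheless true, and the clean way to see it is the paper's: stay in $F_p^{\#}$, where the genus-$1$ side of $q$ is already a single smooth elliptic curve $D_q$ attached only at $q$; since $\lambda_q=0$, no $(-2)$-chain lies over $q$, and $D_q$ carries no other nodes of $F_p^{\#}$, so its preimage in $F_p$ is $D_q$ itself. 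If you prefer to argue in $F_p$, the correct reasoning is that any maximal $(-2)$-chain in $\tilde E'$ must join two non-$(-2)$-components; the only one available inside $\tilde E'$ is the elliptic curve, so the chain would either form a loop (impossible in a tree) or pass through $\tilde q$, forcing $\lambda_q>0$.
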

We leave the proof of the claim at the end of the section.

Let $\phi:\,\ol B\to B$ be a cover of $B$, such that $\deg \phi=de$ and $\phi$ ramifies
uniformly over $\Delta_{nc}$ with ramification index equaling to $e$.
By Kodaira-Parshin construction, such a cover exists for all $e$ if
$b=g(B) > 0$ and for odd $e\geq 3$ if $b = 0$ (cf. \cite{vojta88} or \cite{tan95}).
Let $\bar b=g(\ol B)$ be the genus of $\ol B$.
According to Hurwitz formula, we get
\begin{equation}\label{miyaoka2}
2(\bar b-1)=de\cdot\left(2(b-1)+\frac{e-1}{e}\cdot s_{nc}\right).
\end{equation}

Let $\ol {S^{\#}}=\ol B \times_B S^{\#}$ be the fibre-product,
$\ol S \to \ol {S^{\#}}$ the minimal resolution of singularities.
We have the following commutative diagram:
\begin{center}\mbox{}
\xymatrix{
\ol S \ar@/_7mm/"3,1"_{\bar f} \ar[d] \ar[rr]^{\ol\Phi} && S \ar[d] \ar@/^7mm/"3,3"^f \\
\ol{ S^{\#}} \ar[d] \ar[rr]^{\Phi^{\#}} && S^{\#} \ar[d]\\
\ol{ B} \ar[rr]^{\phi} && B}
\end{center}

For $p\in\Delta_c$,
the inverse image of a singular point $(S^{\#},\,q)$ of type $A_{\lambda_q}$ with $q\in F^{\#}_p$
is $de$ singular points of the same type $A_{\lambda_q}$ in $\ol{ S^{\#}}$.
For $p\in\Delta_{nc}$,
the inverse image of a singular point $(S^{\#},\,q)$ of type $A_{\lambda_q}$ with $q\in F^{\#}_p$
is $d$ singular points of type $A_{e(\lambda_q+1)-1}$ in $\ol{ S^{\#}}$.
Let $$D=\sum\limits_{p\in\Delta_c}\big(\Phi^{\#}\big)^{-1}(D_p).$$
Since $\phi$ is unbranched over $\Delta_c$,
$D$ is smooth and lies on the smooth part of $\ol {S^{\#}}$,
and the number of irreducible components in $D$ is
$$|D|=de\cdot \sum_{p\in\Delta_c}|D_p|.$$
Hence
\begin{equation}\label{miyaoka4}
2\,\omega_{\ol S}\cdot D+D^2=|D|=de\cdot\sum_{p\in\Delta_c}|D_p|.
\end{equation}
Because $f$ is semi-stable, $\bar f:\,\ol S \to \ol B$ is also semi-stable, and
\begin{equation}\label{miyaoka5}
\delta_{\bar f}=de\cdot \delta_f,\qquad \omega_{\ol S/\ol B}^2=de\cdot \omega_{S/B}^2.
\end{equation}
It is not difficult to see that $\ol S$ is minimal and of general type if $\bar b=g(\ol B) \geq 1$,
which is satisfied when $de$ is large enough.
Hence applying Theorem \ref{theoreminpfupper2} to the case
by setting $X^{\#}=\ol{ S^{\#}}$, $\ol X=\ol S$, and $D$ as above,
we get
\begin{eqnarray}
&&de\cdot \sum_{q\in \Upsilon^{\#}_{c}}\left((\lambda_q+1)-\frac{1}{\lambda_q+1}\right)
+d\cdot\sum_{q\in \Upsilon^{\#}_{nc}}\left(e(\lambda_q+1)-\frac{1}{e(\lambda_q+1)}\right)\nonumber\\
&\leq& \chit(\ol S)-\frac13\left(\omega_{\ol S}+D\right)^2\nonumber\\
&=& de\cdot \left(\delta_f-\frac13\omega_{S/B}^2-
\frac13\sum_{p\in\Delta_c}|D_p|\right)
+\frac13de\cdot(2g-2)\left(2b-2+\frac{e-1}{e}\cdot s_{nc}\right).\qquad\quad\label{miyaoka3}
\end{eqnarray}
We use \eqref{defofrelativeinv}, \eqref{miyaoka2},
\eqref{miyaoka4} and \eqref{miyaoka5} in the last step above.
Note that
\begin{eqnarray}
\delta_f=\sum_{p\in\Delta} \delta(F_p)
&=&\sum_{q\in \Upsilon_{c}^{\#}}(\lambda_q+1)
+\sum_{q\in \Upsilon_{nc}^{\#}}(\lambda_q+1),\label{miyaoka8}\\[0.1cm]
\sum_{q\in \Upsilon_{c}^{\#}}\frac{3}{\lambda_q+1}
&=& \sum_{q\in \Upsilon_{c}^{\#}\atop{\lambda_q=0}}\frac{3}{\lambda_q+1}+
\sum_{q\in \Upsilon_{c}^{\#}\atop{\lambda_q>0}}\frac{3}{\lambda_q+1}    \nonumber\\
&\leq & \sum_{q\in \Upsilon_{c}^{\#}\atop{\lambda_q=0}} 3(\lambda_q+1)+
\sum_{q\in \Upsilon_{c}^{\#}\atop{\lambda_q>0}}\frac{3(\lambda_q+1)}{4} \nonumber\\
&=&\sum_{q\in \Upsilon_{c}^{\#}}3(\lambda_q+1)-
\sum_{q\in \Upsilon_{c}^{\#}\atop{\lambda_q>0}}\frac{9(\lambda_q+1)}{4} \nonumber\\
&\leq &\sum_{q\in \Upsilon_{c}^{\#}}3(\lambda_q+1)-
\sum_{q\in \Upsilon_{c}^{\#}\atop{\lambda_q>0}}(\lambda_q+1).\label{miyaoka1}
\end{eqnarray}
Combining \eqref{miyaoka1} with \eqref{miyaoka11} and \eqref{miyaoka10}, one gets
\begin{eqnarray}
\sum_{q\in \Upsilon_{c}^{\#}}\frac{3}{\lambda_q+1}-\sum_{p\in\Delta_c}|D_p|
&\leq& \sum_{q\in \Upsilon_{c}^{\#}}3(\lambda_q+1)-
\sum_{p\in\Delta_c}\left(\sum_{q\in F_{p}^{\#}\atop{\lambda_q>0}}(\lambda_q+1)+|D_p|\right)\nonumber\\
&\leq&\sum_{q\in \Upsilon_{c}^{\#}}3(\lambda_q+1)-\sum_{p\in\Delta_c}\delta_1(F_p)
=2\delta_1(\Upsilon_{c})+2\delta_h(\Upsilon_{c}).\qquad\qquad\label{miyaoka7}
\end{eqnarray}
Therefore by \eqref{miyaoka3}, \eqref{miyaoka8} and \eqref{miyaoka7}, we get
$$\begin{aligned}
\omega_{S/B}^2&\leq
(2g-2)(2b-2+s_{nc})
+\sum_{q\in \Upsilon_{c}^{\#}}\frac{3}{\lambda_q+1}-\sum_{p\in\Delta_c}|D_p|
+\frac{1}{e^2}\cdot\sum_{q\in \Upsilon_{nc}^{\#}}\frac{3}{\lambda_q+1}-\frac{(2g-2)s_{nc}}{e}\\[.1cm]
&\leq(2g-2)(2b-2+s_{nc})+2\delta_1(\Upsilon_{c})+2\delta_h(\Upsilon_{c})
+\frac{1}{e^2}\cdot\sum_{q\in \Upsilon_{nc}^{\#}}\frac{3}{\lambda_q+1}-\frac{(2g-2)s_{nc}}{e}.
\end{aligned}$$
Letting $e$ tend to infinity, we get the required inequality \eqref{eqnupper}.
If $s_{nc}>0$,
then letting $e$ be large enough, one has
$$\frac{1}{e^2}\cdot\sum_{q\in \Upsilon_{nc}^{\#}}\frac{3}{\lambda_q+1}-\frac{(2g-2)s_{nc}}{e}<0.$$
Hence if $\Delta_{nc}\neq \emptyset$, then the inequality \eqref{eqnupper} is strict.
Finally, if $\Delta=\emptyset$, then $f$ is a Kodaira family, and
$$\deg\left(\Omega^1_{B}(\log\Delta_{nc})\right)=2b-2,\qquad
\delta_1(\Upsilon_{c})=\delta_h(\Upsilon_{c})=0,$$
so it follows from \cite[Corollary\,0.6]{liukefeng96} that \eqref{eqnupper} is also strict.
\end{proof}
\begin{remark}\label{miyaoka12}
If \eqref{eqnupper} is indeed an equality, i.e.,
\begin{equation}\label{miyaoka9}
\omega_{S/B}^2 = (2g-2)\cdot \deg\left(\Omega^1_{B}(\log\Delta_{nc})\right)
+2\delta_1(\Upsilon_{c})+3\delta_h(\Upsilon_{c}),
\end{equation}
then $\Delta_{nc}=\emptyset$; $E_p=\emptyset$ for $p\in\Delta_c$ by \eqref{miyaoka1};
and $\delta_1(\Upsilon_{c})=\sum\limits_{p\in\Delta_c}|D_p|$ by \eqref{miyaoka10}.
Hence \eqref{miyaoka9} is equivalent to
$$c_1^2\left(\Omega_S^1\bigg(\log \Big(\sum\limits_{p\in\Delta_c}D_p\Big)\bigg)\right)
=3c_2\left(\Omega_S^1\bigg(\log \Big(\sum\limits_{p\in\Delta_c}D_p\Big)\bigg)\right).$$
It follows that $S\setminus\left(\bigcup\limits_{p\in\Delta_c}D_{p}\right)$
is a ball quotient by \cite{kobayashi} or \cite{mok12}.
\end{remark}

\begin{proof}[Proof of Claim {\rm\ref{e+Dgeqdelta1}}]~
First we prove that $D_p$ is smooth. Assume that $D_p$ is not smooth.
As each irreducible component of $D_p$ is smooth by definition,
there are two irreducible components $D_{p,1}$, $D_{p,2}$
contained in $D_p$ such that $D_{p,1} \cap D_{p,2}\neq \emptyset$.
Since $D_{p,j}^2=-1$, each irreducible component $D_{p,j}$ of $D_p$ intersects $F_p- D_{p,j}$
in exactly one point.
So $F_p=D_p=D_{p,1}+D_{p,2}$ with $\omega_S\cdot D_{p,1}=\omega_S\cdot D_{p,2}=1$,
and hence $2g-2=\omega_S\cdot F_p=2$.
It is impossible, since $g\geq 3$ by our assumption.

It remains to prove \eqref{miyaoka10}.

For this purpose, we make use of the stable model $F_p^{\#}$ of $F_p$.
As $F_p$ has compact Jacobian, $F_p^{\#}$ is tree of smooth curves.
Hence a singular point in $F_p^{\#}$ of type $1$ is a point $q$ such that
the partial normalization at $q$ consists of a smooth elliptic curve $D_q$ and a curve of
arithmetic genera $g-1$.
And $\delta_1(F_p)$ is by definition is the number of singular points
in $F_p^{\#}$ of type $1$ counting multiplicity.

Let $m_q$ be the multiplicity of a singular point $q\in F^{\#}_p$ of type $1$.
If $m_q=1$, then $(S^{\#},\,q)$ is smooth at $q$, hence the inverse image of $D_q$ in $S$
is a smooth elliptic curve with self-intersection equal to $-1$
and does not intersect $E_p$.
It means that the inverse image of $D_q$ is contained in $D_p$.
If $m_q>1$, then $(S^{\#},\,q)$ is a rational double point of type $A_{m_q-1}$,
hence $\lambda_q=m_q-1>0$.
Therefore, \eqref{miyaoka10} follows immediately.
\end{proof}

\section{Lower bound of $\omega_{S/B}^2$ for hyperelliptic families}\label{sectionlowerhyper}
The section aims to prove Theorem \ref{thmlower}.
So we always assume that $f:\,S \to B$ is a non-isotrivial semi-stable family of
hyperelliptic curves of genus $g\geq2$ with relative irregularity $q_f=q(S)-g(B)$.

When $q_f=0$, it is a direct consequence of the Noether's formula and the following formula
given in \cite[Proposition 4.7]{ch88}:
\begin{equation}\label{formulaofdegomega}
\begin{aligned}
\deg \left(f_*\Omega^1_{S/B}(\Upsilon)\right)\,=&~\frac{g}{4(2g+1)}\xi_0(\Upsilon)\\
&~+\sum_{i=1}^{[g/2]}\frac{i(g-i)}{2g+1}\delta_i(\Upsilon)
+\sum_{j=1}^{[(g-1)/2]}\frac{(j+1)(g-j)}{2(2g+1)}\xi_j(\Upsilon).
\end{aligned}
\end{equation}
When $q_f>0$,
the proof starts from the observation that the double cover $\pi:\, S \to S/\langle\sigma\rangle$
is fibred, where $\sigma$ is the involution on $S$ induced by the hyperelliptic involution on fibres of $f$.
From this it follows a restriction on those invariants $\delta_i(\Upsilon)$'s and $\xi_j(\Upsilon)$'s
(cf. Proposition\,\ref{boundofxi_0prop}).
And the proof of Theorem \ref{thmlower} is completed in Section \ref{subsectpfthmlower}
by combining this with \eqref{formulaofdegomega}.

\subsection{Proof of \eqref{eqnlower} for $q_f=0$}
By \eqref{formulaofdelta_f} and \eqref{relationdeltaxi}, one has
\begin{equation}\label{formulaofdelta_f''}
\delta_f=\xi_0(\Upsilon)+\sum_{i=1}^{[g/2]}\delta_i(\Upsilon)
+2\sum_{j=1}^{[(g-1)/2]}\xi_j(\Upsilon).
\end{equation}
From the above equation together with Noether's formula and \eqref{formulaofdegomega}, it follows that
\begin{equation}\label{formulaofomega^2}
\begin{aligned}
\omega^2_{S/B}\,=&~\frac{g-1}{2g+1}\xi_0(\Upsilon)\\
&~+\sum_{i=1}^{[g/2]}\left(\frac{12i(g-i)}{2g+1}-1\right)\delta_i(\Upsilon)
+\sum_{j=1}^{[(g-1)/2]}\left(\frac{6(j+1)(g-j)}{2g+1}-2\right)\xi_j(\Upsilon).
\end{aligned}
\end{equation}
Hence
\begin{eqnarray*}
&&     \omega_{S/B}^2-\frac{4(g-1)}{g}
       \cdot\deg \left(f_*\Omega^1_{S/B}(\Upsilon)\right)\\
& =  & \sum_{i=1}^{[g/2]}\frac{4i(g-i)-g}{g}\delta_i(\Upsilon)
       +\sum_{j=1}^{[(g-1)/2]}\frac{2(j+1)(g-j)-2g}{g}\xi_j(\Upsilon)\\
&\geq&
\left\{
\begin{aligned}
&\frac{3g-4}{g} \delta_1(\Upsilon)+\frac{7g-16}{g} \delta_h(\Upsilon), &\qquad&
\text{if~} \Delta_{nc}\neq \emptyset;\\
&\sum_{i=1}^{[g/2]}\frac{4i(g-i)-g}{g}\delta_i(\Upsilon), &&
\text{if~}\Delta_{nc}= \emptyset.
\end{aligned}\right.
\end{eqnarray*}
Hence \eqref{eqnlower} is proved for $q_f=0$.
\qed

\subsection{Hyperelliptic family with positive relative irregularity}
The main purpose of this subsection is to prove the following
proposition for a semi-stable hyperelliptic family with positive relative irregularity.
\begin{proposition}\label{boundofxi_0prop}
Let $f:\,S\to B$ be a non-isotrivial semi-stable family of
hyperelliptic curves of genus $g$ with singular locus $\Upsilon/\Delta$.
Let $\delta_i(\Upsilon)$'s and $\xi_j(\Upsilon)$'s be defined in Section \ref{sectionprelim}.
Assume that the relative irregularity $q_f=q(S)-g(B)>0$. Then
\begin{equation}\label{boundofxi_0eqn}
\begin{aligned}
&\sum_{i= q_f}^{[g/2]}\frac{(2i+1)(2g+1-2i)}{g+1}\delta_i(\Upsilon)
+\sum_{j= q_f}^{[(g-1)/2]}\frac{2(j+1)(g-j)}{g+1}\xi_j(\Upsilon)\\
\geq~&~ \xi_0(\Upsilon)+\sum_{i= 1}^{q_f-1}4i(2i+1)\delta_i(\Upsilon)
+\sum_{j= 1}^{q_f-1}2(j+1)(2j+1)\xi_j(\Upsilon).
\end{aligned}
\end{equation}
\end{proposition}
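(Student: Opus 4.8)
The plan is to exploit the key structural observation announced in the introduction: when $q_f = q(S) - g(B) > 0$, the quotient map $\pi\colon S \to S/\langle\sigma\rangle$ induced by the hyperelliptic involution $\sigma$ is \emph{fibred}, i.e.\ the quotient surface $S/\langle\sigma\rangle$ itself carries a fibration whose fibres map to the $\mathbb P^1$'s covered by the fibres of $f$. Concretely, $S/\langle\sigma\rangle$ is birational to a ruled surface over a curve $B'$, and there is an induced map $B' \to B$. The positivity $q_f>0$ forces $g(B') > g(B)$, so this map has degree $\geq 2$ (in fact one gets an honest fibration of $S/\langle\sigma\rangle$ over $B'$ with rational fibres, together with a nontrivial map $B' \to B$). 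The first step is therefore to set up this double cover carefully, record that $S$ is the double cover of a (blown-up) ruled surface branched along a divisor $R$ which is a disjoint union of sections-and-fibral-components, and translate each node of a singular fibre $F$ of $f$ — equivalently each singular point of the genus-zero $(2g+2)$-pointed admissible curve $\Gamma$ carrying the branch points — into combinatorial data on $B'$.

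Next I would run through the types of singular points catalogued in Section~\ref{sectionprelim}. A point of $\Delta_i$ ($1\le i\le[g/2]$) contributes to $\delta_i(\Upsilon)$, a point of index $2$ (i.e.\ $\Xi_0$) contributes to $\xi_0(\Upsilon)$, and a point of index $2j+2$ (i.e.\ $\Xi_j$, $1\le j\le[(g-1)/2]$) contributes to $\xi_j(\Upsilon)$; these are read off via \eqref{relationdeltaxi}. The crucial point is that being fibred imposes a \emph{numerical} constraint: the branch divisor $R$ and the induced fibration on $B'$ must be compatible, and counting the ramification of $B' \to B$ together with the way the $2g+2$ marked points distribute over the two sides of each node of $\Gamma$ yields, after the base change that resolves everything to an admissible cover, an inequality relating the "large-genus-split" contributions (those with $i \geq q_f$ or $j \geq q_f$) to the "small-genus-split" contributions ($i < q_f$, $j < q_f$) plus the irreducible-node term $\xi_0(\Upsilon)$. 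The coefficients $\tfrac{(2i+1)(2g+1-2i)}{g+1}$, $\tfrac{2(j+1)(g-j)}{g+1}$, $4i(2i+1)$, and $2(j+1)(2j+1)$ should emerge from exactly this bookkeeping: each is (up to the normalisation by $g+1$) the self-intersection or intersection contribution of the corresponding component of the branch locus on the ruled surface, weighted by how many of the $2g+2$ sections pass through it. I would verify the coefficients in the two extreme cases $i=q_f$ and $i=1$ to pin down the normalisation, then interpolate.

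Then I would assemble the global inequality by summing the local contributions over all singular fibres: the fibredness gives, for the whole family, an inequality of the shape $\sum (\text{positive, large-}i) \geq \xi_0(\Upsilon) + \sum(\text{positive, small-}i)$, which is precisely \eqref{boundofxi_0eqn}. A useful sanity check is that when $q_f=1$ the right-hand sums over $1\le i\le q_f-1$ and $1\le j\le q_f-1$ are empty, so the inequality reduces to $\sum_{i\ge1}\tfrac{(2i+1)(2g+1-2i)}{g+1}\delta_i(\Upsilon) + \sum_{j\ge1}\tfrac{2(j+1)(g-j)}{g+1}\xi_j(\Upsilon) \geq \xi_0(\Upsilon)$, which says that an irreducible-nodal degeneration of a fibred hyperelliptic family is "paid for" by compact-type or reducible degenerations — a plausible statement. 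I expect the main obstacle to be the bookkeeping at the base change: one must pass to a cover $\widetilde B \to B$ totally ramified over each relevant point to make the admissible double cover genuinely a double cover (as in the definition of $\xi_j$), keep track that the $\xi_j(F)$ are defined as $\xi_j(\widetilde F)/d$ so that the final inequality is base-change invariant, and make sure the ruled-surface / branch-divisor picture is stable under this. A secondary subtlety is handling the fibral components of the branch divisor (the vertical part of $R$), which can contribute to several $\delta_i$ and $\xi_j$ simultaneously and must be disentangled by the genus-splitting types; I would treat a node of $\Gamma$ of index $\alpha$ by looking at $\min(\lceil\alpha/2\rceil - 1,\ldots)$ against the threshold $q_f$ and argue that the fibredness forces the branch points on the "small side" to be constrained in number, which is what produces the sign in \eqref{boundofxi_0eqn}.
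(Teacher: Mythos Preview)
Your high-level identification of the fibred double cover as the engine is correct, and your intuition that the coefficients arise from self-intersection contributions of the branch locus is on target. But the plan as written has a genuine gap in the mechanism, and one piece of the geometry is set up incorrectly.

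First the geometry. The quotient $\wt Y \simeq S/\langle\sigma\rangle$ is \emph{not} birational to a ruled surface over $B'$; it is ruled over $B$ via $\tilde h$, and the content of ``fibred'' (Proposition~\ref{fibredprop}) is that it carries a \emph{second} fibration $\tilde h':\wt Y\to D$ with $D\cong\bbp^1$. It is $\wt S$ that fibres over $B'$, and $B'\to D$ is a double cover; there is no map $B'\to B$ playing the role you assign to it. In particular, your plan to ``count the ramification of $B'\to B$'' is not what drives the argument.

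The actual mechanism, which your sketch does not reach, has two pieces. The first is that since $\wt R_h=\sum_{i=1}^{2g+2}\sigma_i$ lies in fibres of $\tilde h'$, the same is true of its image $\hat R_h$ after contracting those vertical $(-1)$-curves whose contraction still leaves a morphism to $D$; hence $\hat R_h^2\le 0$. Combined with the Cornalba--Harris formula $\wt R_h^2=-\sum_{q_l}\frac{\beta_l(2g+2-\beta_l)}{2g+1}$ for the self-intersection of the $2g+2$ disjoint sections, and with the elementary blow-up relation $\wt R_h^2 = R_h^2-\sum_{q_l}\beta_l^2$, this single inequality $\hat R_h^2\le 0$ unwinds to
\[
\sum_{q_l\in\wt\Lambda_{\tilde\rho}}\beta_l(\beta_l-1)\ \le\ \sum_{q_l\in\wt\Lambda_{\hat\rho}}\frac{\beta_l(2g+2-\beta_l)}{2g+2},
\]
which is exactly \eqref{boundofxi_0eqn} once one identifies $\beta_l(\beta_l-1)$ and $\frac{\beta_l(2g+2-\beta_l)}{2g+2}$ with the stated coefficients via the dictionary $\beta=2i+1\leftrightarrow\delta_i$, $\beta=2j+2\leftrightarrow\xi_j$.

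The second piece --- the reason the threshold sits at $q_f$ --- is a short Hurwitz argument you do not supply: any vertical $(-1)$-curve $C\subset\hat Y$ not yet contracted must surject onto $D$, hence its preimage in $\wt S$ surjects onto $B'$ with $g(B')=q_f$, forcing $C\cdot\hat R\ge 2q_f+2$ and therefore $C\cdot\hat R_h\ge 2q_f+1$. This is precisely what places every node created by the contraction $\hat\rho$ on the ``large-index'' side ($\beta_l\ge 2q_f+1$) and every node of small index on the $\tilde\rho$ side, producing the split of the sums at $i=q_f$, $j=q_f$. Without this claim, nothing in your outline explains why $q_f$ is the cutoff rather than some other value.
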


As said before, the key point is the observation that
the induced double cover $\pi:\, S \to S/\langle\sigma\rangle$ is fibred.
To be more precise,
let $f:\,S\to B$ be as in the above proposition,
and $f^{\#}:\,S^{\#} \to B$ the stable model.
The hyperelliptic involution induces a double cover
$\pi^{\#}:\,S^{\#}\to Y^{\#}$.
By resolving the singular points, one gets a double cover
$\tilde\pi:\,\wt S \to \wt Y$ between smooth surfaces
with smooth branched divisor $\wt R \subseteq \wt Y$.
Let $\tilde f:\,\wt S \to B$ and $\tilde h:\, \wt Y \to B$ be the induced morphism.
\renewcommand{\thefigure}{\arabic{section}.\arabic{subsection}-\arabic{figure}}
\begin{figure}[H]
\begin{center}\mbox{}
\xymatrix{
 \wt S \ar@{->}[rr]^-{\tilde\pi} \ar[d] && \wt Y  \ar@{->}[d] \\
 S^{\#} \ar@{->}[rr]^-{\pi^{\#}} \ar[rd]_-{f^{\#}}   && Y^{\#}\ar[ld]^-{h^{\#}}\\
 &B&
}\vspace{-0.3cm}
\end{center}
\caption{Hyperelliptic involution.\label{hyperellipticdiagram}}
\end{figure}
We would like to show that $\tilde\pi:\,\wt S \to \wt Y$ is fibred.
First we recall the following definition.
\begin{definition}[\cite{khashin83}]\label{definitonfibred}
A double cover $\pi:\, X\to Z$ of smooth projective surfaces with branched divisor
$R \subseteq Z$ is called fibred if there exist a double cover $\pi':\, C \to D$ of smooth projective
curves and morphisms $\varpi:\, X\to C $ and $\epsilon:\, Z \to D$
with connected fibres, such that the diagram
\begin{center}\mbox{}
\xymatrix{
 X \ar@{->}[rr]^-{\pi}\ar@{->}[d]_-{\varpi} && Z \ar@{->}[d]^-{\epsilon}\\
 C \ar@{->}[rr]^-{\pi'}                  && D
}
\end{center}
is commutative, $R$ is contained in the fibres of $\epsilon$, and $q(X)-q(Z) = g(C)-g(D)$,
where $g(C)$ (resp. $g(D)$) is the genus of $C$ (resp. $D$),
$q(X)=\dim H^0(X,\,\Omega^1_X)$, and $q(Z)=\dim H^0(Z,\,\Omega^1_Z)$.
\end{definition}

The next theorem is proved in \cite{khashin83}.
For readers' convenience, we reprove it here.
\begin{theorem}[{\cite[Theorem 1]{khashin83}}]\label{qX>qYfibred}
If the geometrical genus $p_g(Z) =\dim H^0(Z,\,\Omega^2_Z)= 0$. Then any double cover
$\pi: X\to Z$ with smooth branched divisor $R \subseteq Z$ and with $q(X) > q(Z)$ is fibred.
\end{theorem}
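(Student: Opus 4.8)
The statement to prove is Theorem \ref{qX>qYfibred}: a double cover $\pi:X\to Z$ of smooth projective surfaces with smooth branched divisor $R$, with $p_g(Z)=0$ and $q(X)>q(Z)$, is fibred. The natural approach is to analyze the eigenspace decomposition of the $\mathbb Z/2$-action on the cohomology of $X$. Write $\mathcal L$ for the line bundle on $Z$ with $\mathcal L^{\otimes 2}=\mathcal O_Z(R)$, so that $\pi_*\mathcal O_X=\mathcal O_Z\oplus\mathcal L^{-1}$ and, more generally, $\pi_*\Omega^1_X$ and $\pi_*\Omega^2_X$ split into invariant and anti-invariant parts. Concretely, $H^0(X,\Omega^1_X)^{-}\cong H^0(Z,\Omega^1_Z\otimes\mathcal L^{-1})$ and $H^0(X,\Omega^2_X)^{-}\cong H^0(Z,\Omega^2_Z\otimes\mathcal L^{-1})$ (using that $R$ is smooth, so no correction terms appear). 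The hypothesis $q(X)>q(Z)$ says precisely that $H^0(Z,\Omega^1_Z\otimes\mathcal L^{-1})\neq 0$; pick a nonzero global section $\omega$, i.e. a nonzero twisted holomorphic $1$-form.

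First I would use $\omega$ to produce a fibration on $Z$. The form $\omega\in H^0(Z,\Omega^1_Z\otimes\mathcal L^{-1})$ pulls back to a genuine holomorphic $1$-form $\tilde\omega=\pi^*\omega\in H^0(X,\Omega^1_X)^{-}$ on $X$ (the anti-invariant part of the square root of $\mathcal L$ on $X$ is trivial). By the classical Castelnuovo–de Franchis theorem, if $X$ carried two independent such forms with $\tilde\omega_1\wedge\tilde\omega_2=0$ we would get a fibration; here one should instead argue that $\tilde\omega\wedge\overline{\tilde\omega}$ or $\tilde\omega\wedge\pi^*(\text{conjugate type form})$ vanishes because $p_g(Z)=0$ forces $H^0(Z,\Omega^2_Z\otimes\mathcal L^{-1})$ to interact trivially — more precisely, since $p_g(Z)=0$, every holomorphic $2$-form on $Z$ is zero, and one uses this to show that the $1$-form $\tilde\omega$ on $X$, together with its image under the deck transformation, must be proportional, i.e. $\tilde\omega$ is (up to scalar) anti-invariant and its wedge with any invariant holomorphic form vanishes. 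Then Castelnuovo–de Franchis (or Catanese's refinement for irrational pencils) yields a morphism $\varpi:X\to C$ onto a smooth curve $C$ of genus $\geq 1$ with connected fibres, such that $\tilde\omega=\varpi^*(\text{form on }C)$.

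Next I would descend this fibration through the involution. Because $\tilde\omega$ is anti-invariant, the deck transformation $\tau$ of $X/Z$ sends the fibration $\varpi$ to itself up to an automorphism of $C$; one checks that $\tau$ induces an involution $\tau_C$ on $C$ (possibly with fixed points), and that the quotient $\varpi$ is compatible, so that setting $D:=C/\tau_C$ we get $\pi':C\to D$ a double cover and $\epsilon:Z\to D$ with $\pi'\circ\varpi=\epsilon\circ\pi$. One must then verify the three conditions in Definition \ref{definitonfibred}: that $\epsilon$ has connected fibres (Stein factorization, using connectedness of the fibres of $\varpi$), that $R$ lies in fibres of $\epsilon$ (this is where smoothness of $R$ and the fact that $\mathcal L^{-1}$ restricted to fibres of $\epsilon$ has a section forces $R$ to be vertical — the twisted form $\omega$ descends to $D$ up to the square root, pinning $R$ down), and finally the numerical equality $q(X)-q(Z)=g(C)-g(D)$. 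For the last one I would compare eigenspace dimensions: $q(X)-q(Z)=\dim H^0(Z,\Omega^1_Z\otimes\mathcal L^{-1})$ and, on the curve side, $g(C)-g(D)=\dim H^0(C,\omega_C)^{-}$; one shows pullback $\varpi^*$ identifies $H^0(D,\omega_D\otimes\mathcal M^{-1})$ (the relevant twist on $D$) with the space of twisted $1$-forms on $Z$, again using $p_g(Z)=0$ so there is no room for "extra" anti-invariant forms not coming from the pencil.

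**Main obstacle.** The delicate point is showing that \emph{every} anti-invariant holomorphic $1$-form on $X$ comes from the \emph{same} pencil — i.e. ruling out the possibility that $H^0(Z,\Omega^1_Z\otimes\mathcal L^{-1})$ contributes forms $\tilde\omega_1,\tilde\omega_2$ generating distinct fibrations. This is exactly where $p_g(Z)=0$ (equivalently, the vanishing of $H^0(Z,\Omega^2_Z)$, and the smallness of $H^0(Z,\Omega^2_Z\otimes\mathcal L^{-1})$ relative to the symmetric product of the $1$-form space) must be leveraged: the wedge of two anti-invariant $1$-forms is an invariant $2$-form, hence a genuine $2$-form on $Z$, hence zero; so all the relevant $1$-forms on $X$ are pairwise proportional after wedging, and Castelnuovo–de Franchis gives a single pencil carrying all of them, which then descends cleanly. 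Getting the bookkeeping of eigenspaces and twists exactly right — and handling the ramification of $\pi'$ and $\epsilon$ so that Definition \ref{definitonfibred} is satisfied on the nose — is the part that needs care; everything else is the standard Castelnuovo–de Franchis / Stein factorization machinery.
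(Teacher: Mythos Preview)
Your overall strategy---eigenspace decomposition under the deck involution, then Castelnuovo--de Franchis, then descent to $Z$---matches the paper's. In particular your ``main obstacle'' paragraph correctly identifies the key point: for two anti-invariant forms $\tilde\omega_1,\tilde\omega_2$, the wedge $\tilde\omega_1\wedge\tilde\omega_2$ is invariant, hence descends to $H^0(Z,\Omega^2_Z)=0$, so Castelnuovo--de Franchis produces a single pencil $\varpi:X\to C$ absorbing all of $H^0(X,\Omega^1_X)_{-1}$.

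However, there is a genuine gap: the case $k:=q(X)-q(Z)=1$. Castelnuovo--de Franchis requires at least two independent holomorphic $1$-forms whose wedge vanishes; with a single anti-invariant form $\tilde\omega$ you have nothing to wedge it against. Your suggested fixes do not work: $\tilde\omega\wedge\overline{\tilde\omega}$ is a real $(1,1)$-form, not a holomorphic $2$-form, so it is irrelevant to Castelnuovo--de Franchis; and wedging $\tilde\omega$ with an invariant form $\pi^*\eta$ produces an \emph{anti-invariant} $2$-form, which lies in $H^0(Z,\Omega^2_Z\otimes\mathcal L^{-1})$ and need not vanish merely because $p_g(Z)=0$. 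The paper handles $k=1$ by a separate Albanese argument: since $\dim\mathrm{Alb}(X)-\dim\mathrm{Alb}(Z)=1$, one splits off (up to isogeny) an elliptic curve factor $C_0$ from $\mathrm{Alb}(X)$ and uses the composite $X\to\mathrm{Alb}(X)\to C_0$, followed by Stein factorization, to produce $\varpi$.

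Two minor points. First, for showing $R$ lies in fibres of $\epsilon$, the paper's argument is more direct than your twisted-form computation: any $x\in\mathrm{Fix}(\sigma)$ satisfies $\varpi(x)=\varpi(\sigma x)=\tau_C(\varpi(x))$, so $\varpi(\mathrm{Fix}(\sigma))\subset\mathrm{Fix}(\tau_C)$ is finite, forcing each component of $\mathrm{Fix}(\sigma)$ (hence of $R$) to be contracted by $\varpi$ (resp.\ $\epsilon$). Second, the equality $q(X)-q(Z)=g(C)-g(D)$ follows immediately once you know all anti-invariant $1$-forms on $X$ are pulled back from $C$: then $H^0(X,\Omega^1_X)_{-1}\cong H^0(C,\Omega^1_C)_{-1}$, and both sides compute the desired difference; no bookkeeping with twists on $D$ is needed.
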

\begin{proof}
Note that the Galois group ${\rm Gal}(X/Z)\cong \mathbb Z_2$ has a natural action on
$H^0(X,\Omega^1_X)$.
Let $$H^0(X,\Omega^1_X)=H^0(X,\Omega^1_X)_1\oplus H^0(X,\Omega^1_X)_{-1}$$
be the eigenspace decomposition.
Then
$$H^0(X,\Omega^1_X)_1=\pi^*H^0(Z,\Omega^1_Z), \quad\text{and}\quad
k\triangleq\dim H^0(X,\Omega^1_X)_{-1}=q(X)-q(Z).$$
Let $\omega_1,\cdots,\omega_k$ be a basis of $H^0(X,\Omega^1_X)_{-1}$.
First we prove that there exists a morphism \mbox{$\varpi:\,X \to C$}
to a curve $C$ with connected fibres, such that there exist
$\alpha_1,\cdots,\alpha_k \in H^0(C,\Omega^1_C)$ satisfying
\begin{equation}\label{omega=pullalpha}
\omega_i=\varpi^*(\alpha_i), \qquad \forall\, 1\leq i\leq k.
\end{equation}

If $k\geq 2$, then
$$\omega_i\wedge\omega_j \in \wedge^2 H^0(X,\Omega^1_X)_{-1} \subseteq  H^0(X,\Omega^2_X)$$
 is invariant under the action
of the Galois group for any $i\neq j$.
Hence it belongs to
$$H^0(X,\Omega^2_X)_1=\pi^*\left(H^0(Z,\Omega^2_Z)\right),$$
which is zero by our assumption $p_g(Z)=\dim H^0(Z,\Omega^2_Z)=0$.
By \cite[\S IV-5]{bhpv}, there exists a morphism $\varpi:\,X \to C$ with connected fibres
such that \eqref{omega=pullalpha} holds.

For the case $k=1$, note that $\pi:\,X\to Z$ induces a surjective morphism
${\rm Alb}(\pi):\,{\rm Alb}(X)\to {\rm Alb}(Z)$ between abelian varieties as follows.
\begin{center}\mbox{}
  \xymatrix{
   X \ar[rr]^{\pi}\ar[d]_{{\rm Alb}_X}&& Z \ar[d]^{{\rm Alb}_Z}\\
   {\rm Alb}(X) \ar[rr]^{{\rm Alb}(\pi)}&& {\rm Alb}(Z)
   }
\end{center}
By assumption $$\dim {\rm Alb}(X)-\dim {\rm Alb}(Z)=q(X)-q(Z)=k=1.$$
According to the theory on abelian varieties (cf. \cite{mumford74}),
there exists a one-dimensional abelian variety (i.e., an elliptic curve) $C_0$
such that ${\rm Alb}(X)$ is isogenous to ${\rm Alb}(Z) \times C_0$, i.e.,
there exists a morphism $\varphi:\,{\rm Alb}(X) \to {\rm Alb}(Z) \times C_0$ with finite kernel.
Let ${\rm pr}:\,{\rm Alb}(Z) \times C_0 \to C_0$ be the projection, and
$$\varpi_0\triangleq{\rm pr}\circ \varphi\circ{\rm Alb}_X:~X \lra C_0$$
be the composition.
As ${\rm Alb}_X(X)$ generates ${\rm Alb}(X)$, $\varpi_0$ is surjective.
By Stein factorization (cf. \cite[\S\,III-11]{hartshorne}),
we get a morphism $\varpi:\,X \to C$ with connected fibres.
And \eqref{omega=pullalpha} is clearly satisfied.

Note that the property \eqref{omega=pullalpha} implies that the morphism $\varpi:\,X \to C$ is unique.
In particular, the Galois group ${\rm Gal}(X/Z)\cong \mathbb Z_2$ induces an automorphism group $G$ on $C$.
Let $D=C/G$, and $\pi':\,C \to D$ be the natural morphism.
Then by construction, there exists a morphism
$\epsilon:\,Z \to D$ such that $\epsilon\circ \pi=\pi'\circ \varpi$.

Let $\sigma$ be the non-identity element of ${\rm Gal}(X/Z)$.
Then the fixed locus Fix$(\sigma)$ of $\sigma$ is clearly contained in the fibres of $\varpi$.
So $R=\pi\big(\text{Fix}(\sigma)\big)$ is contained in the fibres of $\epsilon$.
By \eqref{omega=pullalpha}, one sees that
the eigenspace decomposition of $H^0(C,\Omega_C^1)$ with respect to the action of $G$ is
$$H^0(C,\Omega_C^1)= (\pi')^*H^0(D,\Omega_D^1)\oplus H^0(C,\Omega_C^1)_{-1}\,,$$
where $H^0(C,\Omega_C^1)_{-1}$ is generated by $\alpha_1$, $\cdots$, $\alpha_k$.
So
$$q(X)-q(Z)=\dim H^0(X,\Omega_X^1)_{-1}=\dim H^0(C,\Omega_C^1)_{-1} = g(C)-g(D).$$
\end{proof}

Coming back to our case.
Note that $q(\wt S)=q(S)$ and $q(\wt Y)=g(B)$.
If $q_f=q(S)-g(B)>0$,
it follows that $q(\wt S)>q(\wt Y)$.
As $\wt Y$ is a ruled surface, the geometric genus $p_g(\wt Y)=0$.
Hence by Theorem \ref{qX>qYfibred} above, we get
\begin{proposition}\label{fibredprop}
The double cover $\tilde\pi:\,\wt S \to \wt Y$ is fibred, i.e.,
there exist a double cover $\pi': B' \to D$ of smooth projective
curves and morphisms $\tilde f':\, \wt S\to B' $ and $\tilde h':\, \wt Y \to D$
with connected fibres, such that the diagram
\begin{center}\mbox{}
\xymatrix{
 \wt S \ar@{->}[rr]^-{\pi}\ar@{->}[d]_-{\tilde f'} && \wt Y \ar@{->}[d]^-{\tilde h'}\\
 B' \ar@{->}[rr]^-{\pi'}                  && D
}
\end{center}
is commutative, $\wt R$ is contained in the fibres of $\tilde h'$ and
\begin{equation}\label{q_f=gB'-gZ}
q_f=q(\wt S)-q(\wt Y) = g(B')-g(D).
\end{equation}
\end{proposition}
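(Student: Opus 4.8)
The plan is to obtain Proposition~\ref{fibredprop} directly from Khashin's criterion, Theorem~\ref{qX>qYfibred}, applied to the double cover $\tilde\pi\colon \widetilde S\to\widetilde Y$ produced above. Since that construction already guarantees that $\widetilde S$ and $\widetilde Y$ are smooth and that the branch divisor $\widetilde R\subseteq\widetilde Y$ is smooth, the whole argument reduces to verifying the two numerical hypotheses of Theorem~\ref{qX>qYfibred}, namely $p_g(\widetilde Y)=0$ and $q(\widetilde S)>q(\widetilde Y)$.

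First I would record the relevant birational invariants. The surface $\widetilde S$ is smooth and birational to $S$ (it resolves the rational double points of type $A$ obtained by contracting the $(-2)$-curves in the fibres of $f$, together with whatever further blow-ups are needed to smooth the branch curve), and $S$ itself is smooth by our standing hypothesis; hence $q(\widetilde S)=q(S)$. For $\widetilde Y$, the generic fibre of $h^\#\colon Y^\#\to B$ is the quotient of a smooth hyperelliptic curve of genus $g$ by its hyperelliptic involution, i.e.\ a smooth rational curve, so $Y^\#$, and with it the smooth model $\widetilde Y$, is birationally a ruled surface over $B$; therefore $q(\widetilde Y)=g(B)$ and $p_g(\widetilde Y)=0$, using that geometric genus and irregularity are birational invariants.

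Combining these, $q(\widetilde S)-q(\widetilde Y)=q(S)-g(B)=q_f$, which is positive by the hypothesis of the proposition; in particular $q(\widetilde S)>q(\widetilde Y)$. Theorem~\ref{qX>qYfibred} applied with $X=\widetilde S$, $Z=\widetilde Y$, $R=\widetilde R$ then furnishes exactly the claimed data: a double cover $\pi'\colon B'\to D$ of smooth projective curves and morphisms $\tilde f'\colon\widetilde S\to B'$, $\tilde h'\colon\widetilde Y\to D$ with connected fibres forming the commutative square of Definition~\ref{definitonfibred}, with $\widetilde R$ contained in the fibres of $\tilde h'$ and $q(\widetilde S)-q(\widetilde Y)=g(B')-g(D)$; the last equality is precisely \eqref{q_f=gB'-gZ}, which completes the proof.

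I do not expect a genuine obstacle once Theorem~\ref{qX>qYfibred} is in hand: the proof is the verification of the two inequalities above together with the standard fact that a surface whose generic fibre over a curve is $\mathbb P^1$ is birationally ruled. The one point deserving explicit care is that the simultaneous desingularization $\widetilde S\to S^\#$, $\widetilde Y\to Y^\#$ can indeed be chosen so that $\tilde\pi$ is a double cover with smooth branch locus while $\widetilde S$ remains birational to $S$ and $\widetilde Y$ birationally ruled over $B$ — this is routine embedded resolution of the branch curve of $\pi^\#$, but it is what makes Khashin's hypotheses literally applicable.
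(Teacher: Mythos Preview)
Your proposal is correct and follows essentially the same approach as the paper: the paper's argument is precisely to note that $q(\wt S)=q(S)$, $q(\wt Y)=g(B)$ (since $\wt Y$ is ruled over $B$, hence also $p_g(\wt Y)=0$), so $q(\wt S)-q(\wt Y)=q_f>0$, and then to invoke Theorem~\ref{qX>qYfibred}. Your additional remarks on why the simultaneous resolution preserves these birational invariants are reasonable elaborations but not structurally different from what the paper does.
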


Our purpose is to prove Proposition \ref{boundofxi_0prop}.
Before going to the detailed proof, we show that the curve $D$ in the above proposition
is actually isomorphic to $\bbp^1$, whence $g(B')=q_f$ by \eqref{q_f=gB'-gZ}.

Let $\wt F$ and $\wt F'$ be any fibres of
$\tilde f:\,\wt S \to B$ and $\tilde f':\, \wt S \to B'$ respectively.
By restrictions, we get the following two morphisms:
\begin{equation}\label{deff_1'}
\left\{
\begin{aligned}
\tilde f'|_{\wt F}:&~ \wt F \lra B',\\
\tilde f|_{\wt F'}:&~ \wt F' \lra B.
\end{aligned}\right.
\end{equation}
It is clear that $\tilde f'|_{\wt F}$ and $\tilde f|_{\wt F'}$ are surjective and
have the same degree
\begin{equation}\label{degf_1'=GammaGamma}
d=\deg \left(\tilde f'|_{\wt F}\right)
=\deg \left(\tilde f|_{\wt F'}\right)=\wt F\cdot\wt F'.
\end{equation}

\begin{proposition}\label{q_f>0fibred}
Let $f:\,S\to B$ be a non-isotrivial
semi-stable family of hyperelliptic curves of genus $g\geq 2$,
and $d$ be defined in \eqref{degf_1'=GammaGamma}.
Then $d\geq 2$, $D\cong \bbp^1$, and
\begin{equation}\label{boundofq_f}
q_f=g(B') \leq \frac{g-1}{d}+1.
\end{equation}
\end{proposition}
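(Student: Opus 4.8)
The plan is to exploit the two fibration structures on $\tilde S$ furnished by Proposition~\ref{fibredprop}, namely $\tilde f\colon\tilde S\to B$ and $\tilde f'\colon\tilde S\to B'$, together with the companion pair $\tilde h,\tilde h'$ on $\tilde Y$, and the fact that $\tilde h\colon\tilde Y\to B$ is ruled: a general fibre of $\tilde h$ is $\mathbb P^1$, being the quotient of a general smooth hyperelliptic fibre of $f$ by the hyperelliptic involution, untouched by the passage to the stable model $S^{\#}$ and its resolution. The desired bound $q_f\le\frac{g-1}{d}+1$ will then come out of Riemann--Hurwitz applied to the degree-$d$ morphism $\tilde f'|_{\tilde F}\colon\tilde F\to B'$, once we know $q_f=g(B')$ and $d\ge2$.

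First I would prove $D\cong\mathbb P^1$. Fix a general fibre $\tilde G\cong\mathbb P^1$ of $\tilde h$ and let $\tilde F=\tilde\pi^{-1}(\tilde G)$ be the corresponding general fibre of $\tilde f$, which is an irreducible (smooth, hyperelliptic, genus-$g$) curve. The claim is that $\tilde h'|_{\tilde G}\colon\tilde G\to D$ is non-constant. Indeed, if it were constant, then by the commutativity $\tilde h'\circ\tilde\pi=\pi'\circ\tilde f'$ the composite $\pi'\circ(\tilde f'|_{\tilde F})\colon\tilde F\to D$ would be constant; since $\pi'$ is finite and $\tilde F$ is connected, $\tilde f'|_{\tilde F}$ would then have image a single point, contradicting that $\tilde f'|_{\tilde F}$ is surjective onto the curve $B'$. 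Hence $\mathbb P^1\cong\tilde G$ dominates $D$, so $D$ is rational, $D\cong\mathbb P^1$, $g(D)=0$, and \eqref{q_f=gB'-gZ} gives $q_f=g(B')$.

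Next I would show $d\ge2$. If $d=1$, then for a general fibre $\tilde F$ the morphism $\tilde f'|_{\tilde F}\colon\tilde F\to B'$ is a degree-one map of smooth projective curves, hence an isomorphism; as this holds for every general fibre of $\tilde f$, all general fibres of $\tilde f$ are isomorphic to $B'$, so $\tilde f$---and therefore $f$, which has the same generic fibre over $B$---would be isotrivial, contrary to hypothesis. Thus $d\ge2$. Finally, Riemann--Hurwitz for $\tilde f'|_{\tilde F}\colon\tilde F\to B'$ reads $2g-2=d\bigl(2g(B')-2\bigr)+(\text{ramification})\ge d\bigl(2g(B')-2\bigr)$, whence $g(B')-1\le\frac{g-1}{d}$, i.e. $q_f=g(B')\le\frac{g-1}{d}+1$.

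The main obstacle is the first step. One has to be confident that $\tilde h\colon\tilde Y\to B$ is genuinely ruled, so that $\tilde G\cong\mathbb P^1$ and in particular a dominant map $\tilde G\to D$ forces $g(D)=0$; this rests on $\tilde Y$ being a resolution of $S^{\#}/\langle\sigma\rangle$ and on the general fibre of $f$ being unaffected by the stable-model and resolution operations. One also needs the irreducibility of the general fibre $\tilde F$ of $\tilde f$ so that the ``finite image forces constant'' argument goes through. The remaining two steps are short once this is settled.
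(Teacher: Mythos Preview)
Your proof is correct and follows essentially the same approach as the paper: use that a general fibre of $\tilde h$ is $\mathbb P^1$ and maps onto $D$ (via the commutative square with $\tilde f'$ and $\pi'$) to get $D\cong\mathbb P^1$ and hence $q_f=g(B')$; rule out $d=1$ by isotriviality; then apply Riemann--Hurwitz to $\tilde f'|_{\tilde F}$. The only cosmetic differences are the order of the first two steps and that you spell out the surjectivity argument for $\tilde h'|_{\tilde G}$ a bit more explicitly than the paper does.
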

\begin{proof}
If $d=1$, then it follows that $\wt S$ is birational to $\wt F \times B'$.
It is a contraction, since $f$ is non-isotrivial.
So $d\geq 2$.

As $\tilde h:\, \wt Y \to B$ is the ruling of the ruled surface $\wt Y$,
a general fibre $\wt \Gamma$ of $\tilde h$ is isomorphic to $\bbp^1$.
By the discussion above, $\wt \Gamma$ is mapped surjective to $D$ by $\tilde h'$.
Hence $D\cong \bbp^1$, i.e., $g(D)=0$.
By \eqref{q_f=gB'-gZ}, $g(B')=q_f$.
According to Hurwitz formula for algebraic curves, we get
$$2g-2=2g(\wt F)-2 \geq d\cdot (2g(B')-2) = 2d\cdot (q_f-1).$$
So \eqref{boundofq_f} is proved.
\end{proof}

\begin{remark}\label{boundofq_fremark}
Let $f:\,S\to B$ be as in the above proposition with $b=g(B)\geq 1$.
It is not difficult to show that $d=\deg \left(\tilde f'|_{\wt F}\right)$
is nothing but the degree of the Albanese map $S\to {\rm Alb\,}(S)$.
Xiao (\cite{xiao92-0}) proved that if $$q_f=\frac{g-1}{d}+1,$$ then $f$ is isotrivial.
\end{remark}

\begin{proof}[Proof of Proposition {\rm\ref{boundofxi_0prop}}]~
In order to prove \eqref{boundofxi_0eqn},
we may limit ourselves to the family whose stable model $f^{\#}:\,S^{\#} \to B$
comes from an admissible double cover (cf. \cite{ch88} or \cite{harrismumford82});
that is, a double cover of a family $h^{\#}:\,Y^{\#} \to B$ of stable $(2g+2)$-pointed noded curves of
arithmetic genus zero, branched along the $2g+2$ disjoint sections $\sigma_i$ of $h^{\#}$
and possibly at some of the nodes of fibres of $h^{\#}$.
Actually, for any semi-stable family of hyperelliptic curves over a curve, we may get a family of
admissible covers by base change and blowing-ups of singular points in the fibres.
These operations have the effect of multiplying all the invariants $\delta_i(\Upsilon)$'s and
$\xi_j(\Upsilon)$'s by the same constant,
and the relative irregularity $q_f$ is non-decreasing under these operation.

Let $\Lambda=\{p_i\}$ be the set of points of $Y^{\#}$ which are nodes of their fibres.
Following \cite[{P$_{470}$}]{ch88}, if the local equation of $Y^{\#}$ at $p_i$ is $xy=t^{m_i}$,
then we say that $p_i$ has multiplicity $m_i$.
We also denote by $\alpha_i$ the index of $p_i$,
i.e., the two connected components of the partial normalization of the fibre $\Gamma^{\#}$ through $p_i$
intersect those $2g+2$ sections in $\alpha_i$ and $2g+2-\alpha_i\geq \alpha_i$ points respectively.

Let $\wt Y \to Y^{\#}$ be the resolution of singularities on $Y^{\#}$,
and $\tilde \pi:\,\wt S \to \wt Y$ the smooth double cover with branched divisor $\wt R$
as in Figure \ref{hyperellipticdiagram}.
The pullbacks of those $2g+2$ disjoint sections $\sigma_i$'s
are still disjoint sections of $\tilde h:\,\wt Y \to B$,
and by abuse of notation, we still denote them by $\sigma_i$'s.
Let $\wt R_h=\sum\limits_{i=1}^{2g+2} \sigma_i$.
Then $\wt R$ is the union of $\wt R_h$ and some disjoint $(-2)$-curves contained
in fibres of $\tilde h:\, \wt Y \to B$.

Let $\wt \Lambda=\{q_l\}$ be the set of points of $\wt Y$ which are nodes of fibres of $\tilde h$.
For a node $q_l \in \wt\Gamma$ in a fibre $\wt \Gamma$ of $\tilde h$, we also
define the index of $q_l$ to be $\beta_l$
if the two connected components of the partial normalization of the fibre $\wt\Gamma$ at $q_l$
intersect those $2g+2$ sections in $\beta_l$ and $2g+2-\beta_l\geq \beta_l$ points respectively.
Then a node $p_i$ in a fibre of $h^{\#}$ of index $\alpha_i$ with multiplicity $m_i$
would introduce $m_i$ nodes in the corresponding fibre of $\tilde h$ with the same indices $\alpha_i$.

Let $\tilde h':\, \wt Y \to D\cong \mathbb P^1$ be the morphism given in Proposition \ref{fibredprop}.
Let $\tilde\rho:\,\wt Y \to \hat Y$ be the largest contraction of `vertical' $(-1)$-curves
such that we still have a morphism $\hat h':\, \hat Y \to D$,
here `vertical' means such a curve is mapped to a point on $B$.
\begin{center}\mbox{}
\xymatrix{
 \wt Y \ar@{->}[rr]^-{\tilde\rho}\ar@{->}[dr]_-{\tilde h'} && \hat Y \ar@{->}[dl]^-{\hat h'}\\
 &D\cong \mathbb P^1&
}\end{center}
This means that any `vertical' $(-1)$-curve in $\hat Y$ is mapped surjectively onto $D$ by $\hat h'$.
Since $\wt R$ is contained in fibres of $\tilde h'$ by Proposition \ref{fibredprop},
$\hat R=\tilde\rho(\wt R)$ is contained in fibres of $\hat h'$.
So in particular any `vertical' $(-1)$-curve in $\hat Y$
is not contained in $\hat R\subseteq \hat Y$.
Let $\hat R_h=\tilde\rho(\wt R_h)$.

\begin{claim}\label{CRgeq2q_f+2}
For any `vertical' $(-1)$-curve $C$ in $\hat Y$,
$C\cdot\hat R \geq 2q_f+2$.
In particular,
$$C\cdot\hat R_h \geq 2q_f+1.$$
\end{claim}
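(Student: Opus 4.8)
The plan is to transfer the problem from $\hat Y$ to the smooth model $\wt Y$, to turn the intersection number of the strict transform of $C$ with $\wt R$ into a genus count by means of the double cover $\tilde\pi:\wt S\to\wt Y$, and then to confront this genus count with the fibration $\tilde f':\wt S\to B'$ furnished by Proposition~\ref{fibredprop} (for which $g(B')=q_f$, as $g(D)=0$). First write $\wt C\subseteq\wt Y$ for the strict transform of $C$ under $\tilde\rho:\wt Y\to\hat Y$; since $C\cong\bbp^1$ is smooth, so is $\wt C\cong\bbp^1$, and since $\tilde\rho(\wt C)=C\not\subseteq\hat R$ we have $\wt C\not\subseteq\wt R$. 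Every curve contracted by $\tilde\rho$ is a $(-1)$-curve, hence not a component of $\wt R$ (the components of $\wt R$ being the sections $\sigma_i$, which dominate $B$, and some $(-2)$-curves); so applying repeatedly the transformation rule for a blow-up at a point $x$,
\[
C'\cdot R'\;=\;C''\cdot R''\;+\;\big(\operatorname{mult}_x C''\big)\big(\operatorname{mult}_x R''\big),
\]
to the successive strict transforms $C'',C'$ of $C$ and $R'',R'$ of $\hat R$ gives $C\cdot\hat R\geq\wt C\cdot\wt R$. Moreover, by the extremal property defining $\hat Y$ the restriction $\hat h'|_C:C\to D$ is surjective, whence $\tilde h'|_{\wt C}:\wt C\to D$ is surjective. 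So it suffices to show $\wt C\cdot\wt R\geq 2q_f+2$.

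Set $\wt C_S:=\tilde\pi^{-1}(\wt C)\subseteq\wt S$. Since $\wt C\not\subseteq\wt R$, the map $\wt C_S\to\wt C$ is a double cover, so (as $\wt C\cong\bbp^1$) either $\wt C_S$ is irreducible or it splits into two disjoint copies of $\wt C$. The heart of the proof is that $\wt C_S$ is irreducible and $\tilde f'|_{\wt C_S}:\wt C_S\to B'$ is non-constant. Indeed, if this failed then $\tilde f'(\wt C_S)$ would be a finite subset of $B'$: one point if $\wt C_S$ is irreducible with $\tilde f'$ constant, and at most two points if $\wt C_S$ splits into two rational curves, because $q_f\geq 1$ forces any rational curve mapping to $B'$ to be contracted. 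But then $\tilde h'\circ\tilde\pi=\pi'\circ\tilde f'$ would make $\tilde h'(\wt C)=\pi'\big(\tilde f'(\wt C_S)\big)$ finite, contradicting the surjectivity of $\tilde h'|_{\wt C}$. Hence $\wt C_S$ is an irreducible curve dominating $B'$, so its normalization $\wt C_S^{\,\nu}$ satisfies $g(\wt C_S^{\,\nu})\geq g(B')=q_f$. On the other hand $\wt C_S^{\,\nu}\to\wt C\cong\bbp^1$ is a double cover branched over the reduced support of the odd part of the divisor $\wt R|_{\wt C}$, which is an even number $2r$ of points with $2r\leq\deg(\wt R|_{\wt C})=\wt C\cdot\wt R$; by Hurwitz $g(\wt C_S^{\,\nu})=r-1\leq\tfrac12\,\wt C\cdot\wt R-1$. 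Combining the two estimates, $\wt C\cdot\wt R\geq 2q_f+2$, and therefore $C\cdot\hat R\geq 2q_f+2$.

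For the refinement $C\cdot\hat R_h\geq 2q_f+1$, write $\hat R=\hat R_h+\hat R_v$ with $\hat R_v$ the image under $\tilde\rho$ of the $(-2)$-curves of $\wt R$; this is a reduced divisor lying in the fibres of the fibration $\hat Y\to B$ induced by $\tilde h$. The vertical curve $C$ is a component of some fibre $\hat\Gamma$ of $\hat Y\to B$, and $\hat\Gamma$, being the image of a reduced fibre of $\tilde h$, is reduced; write $\hat\Gamma=C+\Theta$ with $\Theta\geq 0$ not containing $C$. From $\hat\Gamma\cdot C=0$ and $C^2=-1$ we get $C\cdot\Theta=1$. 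Since $C\not\subseteq\hat R$, the part of $\hat R_v$ supported on $\hat\Gamma$ is $\leq\Theta$, and the remaining components of $\hat R_v$ are disjoint from $C$; hence $C\cdot\hat R_v\leq 1$, and $C\cdot\hat R_h=C\cdot\hat R-C\cdot\hat R_v\geq 2q_f+1$.

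The main obstacle is the irreducibility-and-dominance assertion for $\wt C_S=\tilde\pi^{-1}(\wt C)$: this is precisely the point where both the fibred structure of $\tilde\pi$ (Proposition~\ref{fibredprop}) and the extremal construction of $\hat Y$ — which forces every vertical $(-1)$-curve to dominate $D$ — are indispensable. The other ingredients, namely the behaviour of intersection numbers under blow-down and the Hurwitz count for a double cover of $\bbp^1$, are routine.
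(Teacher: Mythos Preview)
Your argument is correct and follows essentially the same route as the paper's own proof: pass to the strict transform $\wt C\subseteq\wt Y$, use the double cover $\tilde\pi$ to lift $\wt C$ to a curve in $\wt S$, bound its genus from below by $g(B')=q_f$ via the surjection onto $B'$, and bound it from above by Hurwitz in terms of $\wt C\cdot\wt R$. Your treatment is in fact more explicit than the paper's at the key step: where the paper simply asserts ``by construction, $C'$ is mapped surjectively onto $B'$ by $\tilde f'$'', you derive this from the commutative square $\tilde h'\circ\tilde\pi=\pi'\circ\tilde f'$ together with the defining property of $\hat Y$ that $C$ dominates $D$, and you also rule out the possibility that $\tilde\pi^{-1}(\wt C)$ splits.
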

\begin{proof}[Proof of the claim]
Note that $\hat R$ is the union of $\hat R_h$ and some curves in fibres of $\hat h$,
where $\hat h:\, \hat Y \to B$ is the induced morphism from $\tilde h:\,\wt Y \to B$.
Hence for any `vertical' $(-1)$-curve $C$,
let $\hat \Gamma$ be the fibre of $\hat h$ containing $C$.
Then $$C\cdot(\hat R-\hat R_h)\leq C\cdot \hat \Gamma = -C^2=1.$$
Therefore it suffices to prove $C\cdot\hat R \geq 2q_f+2$.

Let $C'\subseteq \wt S$ and $\wt C\subseteq \wt Y$ be the strict inverse image of $C$
on $\wt S$ and $\wt Y$ respectively.
Then by construction, $C'$ is mapped surjectively onto $B'$ by $\tilde f'$, and
$$C\cdot\hat R\geq \wt C \cdot \wt R.$$
Applying Hurwitz formula to the double cover $C'\to \wt C\cong \bbp^1$,
whose branched locus is at most $\wt C \cap \wt R$, one gets
\begin{equation*}
2g(C')-2\leq -4+\#(\wt C \cap \wt R).
\end{equation*}
As $C'$ is mapped surjectively onto $B'$,
$g(C')\geq g(B')=q_f$.
Hence
$$C\cdot\hat R\geq \wt C \cdot \wt R\geq \#(\wt C \cap \wt R)
\geq 2g(C')+2\geq 2q_f+2.$$
\end{proof}
Now we contract $\hat\rho:\,\hat Y \to Y$ to be a $\bbp^1$-bundle $h:\,Y\to B$ in such a way that
the order of any singularity of $R_h=\hat\rho(\hat R_h)$ is at most $g+1$.
It is easy to see that such a contraction exists.
\begin{center}\mbox{}
\xymatrix{
 \wt Y \ar@{->}[rr]^-{\tilde\rho}\ar@{->}[drr]_-{\tilde h} && \hat Y\ar[rr]^-{\hat\rho} \ar@{->}[d]^-{\hat h}
 &&Y\ar[dll]^-{h}\\
 &&B&&
}\end{center}
Let $\rho=\hat\rho\circ\tilde\rho$.
Then $\rho$ can be viewed as a sequence of blowing-ups
$\rho_l:\,Y_l \to Y_{l-1}$ centered at $y_{l-1}\in Y_{l-1}$ with $Y_{t+s}=\wt Y$,
$Y_s=\hat Y$, and $Y_0=Y$.
Let $R_{h,l}\subseteq Y_l$ be the image of $\wt R_h$,
$y_{l-1}$ a singularity of $R_{h,l-1}$ of order $n_{l-1}$.
Then one sees that each blowing-up $\rho_l$
creates a node $q\in \wt\Lambda$ with index
$\beta=n_{l-1}$. Hence
\begin{equation}\label{wtR=R}
\wt R_h^2=R_h^2-\sum_{q_l\in \wt\Lambda} \beta_l^2\,.
\end{equation}

By Claim \ref{CRgeq2q_f+2}, for $1\leq l\leq s$,
any blowing-up $\rho_l:\,Y_l\to Y_{l-1}$ is centered at a point $y_{l-1}$ with $n_{l-1}\geq 2q_f+1$.
In other words, for $1\leq l\leq s$,
each $\rho_l$ creates a node $q\in \wt\Lambda$ with index at least $2q_f+1$.
We divide the nodes $\wt \Lambda$ of fibres of $\tilde h$ into two parts:
one, denoted by $\wt \Lambda_{\hat\rho}$, is created by blowing-ups contained in $\hat \rho$;
the other one, denoted by $\wt \Lambda_{\tilde\rho}$, is created by blowing-ups contained in $\tilde \rho$.
Then
\begin{eqnarray}
\beta_l&\geq&2q_f+1,\qquad\quad\forall~q_l\in \wt \Lambda_{\hat\rho};\label{q_lgeq2q_f+1}\\
\hat R_h^2&=&R_h^2-\sum_{q_l\in \wt \Lambda_{\hat\rho}} \beta_l^2\,.\label{hatR=R}
\end{eqnarray}
Note that $\wt R_h$ consists of $2g+2$ disjoint sections $\sigma_i$'s.
According to \cite[Lemma 4.8]{ch88}, it follows that
\begin{equation}\label{wtR=bych88}
\wt R_h^2=\sum_{i=1}^{2g+2}\sigma_i\cdot \sigma_i
=-\sum_{p_i\in \Lambda}\frac{m_i\alpha_i(2g+2-\alpha_i)}{2g+1}
=-\sum_{q_l\in \wt \Lambda}\frac{\beta_l(2g+2-\beta_l)}{2g+1}.
\end{equation}
Combining \eqref{wtR=R}, \eqref{hatR=R} and \eqref{wtR=bych88}, one gets
\begin{equation}\label{hatR=wtR}
\hat R_h^2=\wt R_h^2+\sum_{q_l\in \wt \Lambda_{\tilde\rho}} \beta_l^2=
\sum_{q_l\in \wt \Lambda_{\tilde\rho}} \frac{(2g+2)\beta_l(\beta_l-1)}{2g+1}
-\sum_{q_l\in \wt \Lambda_{\hat\rho}}\frac{\beta_l(2g+2-\beta_l)}{2g+1}.
\end{equation}
Now according to Proposition \ref{fibredprop},
$\wt R_h\subseteq \wt R$ is contained in the fibres of $\tilde h'$.
By our construction, $\hat R_h=\tilde\rho(\wt R_h)$ is contained in the fibres of $\hat h'$.
In particular, $\hat R_h^2 \leq 0$. Hence by \eqref{hatR=wtR}, we obtain
\begin{equation}\label{xi11}
\sum_{q_l\in \wt \Lambda_{\tilde\rho}} \beta_l(\beta_l-1)
\leq \sum_{q_l\in \wt \Lambda_{\hat\rho}}\frac{\beta_l(2g+2-\beta_l)}{2g+2}.
\end{equation}

Let $\epsilon_k$ (resp. $\nu_k$) be the number of points $q_l\in \wt \Lambda$
of index $2k+1$ (resp. $2k+2$).
Then it is clear that
$\epsilon_k$ (resp. $\nu_k$) is also the number of points $p_i\in \Lambda$ of index $2k+1$ (resp. $2k+2$),
counted according to their multiplicity.
Hence (cf. \cite[(4.10)]{ch88})
\begin{equation}\label{invrelationdouble}
\xi_0(\Upsilon)=2\nu_0;\quad
\delta_i(\Upsilon)=\epsilon_i/2,~\forall\,1\leq i\leq [g/2];\quad
\xi_j(\Upsilon)=\nu_j,~\forall\,1\leq j\leq [(g-1)/2].
\end{equation}
Combining all together, one gets
\begin{eqnarray*}
&&\sum_{i= q_f}^{[g/2]}\frac{(2i+1)(2g+1-2i)}{g+1}\delta_i(\Upsilon)
+\sum_{j= q_f}^{[(g-1)/2]}\frac{2(j+1)(g-j)}{g+1}\xi_j(\Upsilon)\\
&=&\sum_{i= q_f}^{[g/2]}\frac{(2i+1)\big((2g+2)-(2i+1)\big)}{2g+2}\epsilon_i
+\sum_{j= q_f}^{[(g-1)/2]}\frac{(2j+2)\big((2g+2)-(2j+2)\big)}{2g+2}\nu_j\\
&=&\sum\frac{\beta_l(2g+2-\beta_l)}{2g+2}, \qquad
\text{the sum is taken over all $q_l\in \wt \Lambda$ with index $\beta_l\geq 2q_f+1$,}\\
&\geq& \sum_{q_l\in \wt \Lambda_{\hat\rho}}\frac{\beta_l(2g+2-\beta_l)}{2g+2},\quad~\,
\text{since any point $q_l\in \wt \Lambda_{\hat\rho}$ is of index $\beta_l\geq 2q_f+1$ by \eqref{q_lgeq2q_f+1},}\\
&\geq& \sum_{q_l\in \wt \Lambda_{\tilde\rho}} \beta_l(\beta_l-1), \qquad\qquad \text{by~}\eqref{xi11},\\
&\geq&\sum \beta_l(\beta_l-1),\qquad\qquad~~
\begin{aligned}
&\text{the sum is taken over all $q_l\in \wt \Lambda$ with index $\beta_l< 2q_f+1$,}\\
&\text{and such points are all contained in $\wt \Lambda_{\tilde\rho}$ by \eqref{q_lgeq2q_f+1},}
\end{aligned}
\\[0.1cm]
&=&2\nu_0+\sum_{i= 1}^{q_f-1}2i(2i+1)\epsilon_i
+\sum_{j= 1}^{q_f-1}2(j+1)(2j+1)\nu_j\\
&=& \xi_0(\Upsilon)+\sum_{i= 1}^{q_f-1}4i(2i+1)\delta_i(\Upsilon)
+\sum_{j= 1}^{q_f-1}2(j+1)(2j+1)\xi_j(\Upsilon).
\end{eqnarray*}
This completes the proof.
\end{proof}

The next lemma will be used in Section \ref{sectionconclusion}.
\begin{lemma}\label{d=2bars_i=0}
Let $f:\,S\to B$ be a non-isotrivial semi-stable family of
hyperelliptic curves of genus $g$ with singular locus
$\Upsilon/\Delta$,
and $d$ be defined in \eqref{degf_1'=GammaGamma}.
If $d=2$, then
$$\delta_i(\Upsilon)=\xi_i(\Upsilon)=0,\quad \forall~1\leq i\leq q_f-1.$$
\end{lemma}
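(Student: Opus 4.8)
The plan is to argue inside the admissible cover setting of the proof of Proposition~\ref{boundofxi_0prop}. As there, after a base change and blow-ups---which multiply every $\delta_i(\Upsilon),\xi_j(\Upsilon)$ by the same positive constant, do not decrease $q_f$, and leave $d$ unchanged---we may assume the stable model comes from an admissible double cover, resolve it to a smooth double cover $\tilde\pi\colon\wt S\to\wt Y$ with branch divisor $\wt R=\wt R_h+(\text{$(-2)$-curves in fibres of $\tilde h$})$, where $\wt R_h=\sum_{i=1}^{2g+2}\sigma_i$, and use the fibration $\tilde h'\colon\wt Y\to D\cong\bbp^1$ of Proposition~\ref{fibredprop}, all of whose fibres contain $\wt R$. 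Restricting $\tilde h'\circ\tilde\pi=\pi'\circ\tilde f'$ to a fibre $\wt F$ of $\tilde f$ and using~\eqref{degf_1'=GammaGamma}, a general fibre $\wt\Gamma$ of $\tilde h$ is mapped by $\tilde h'$ onto $D$ with degree $d=2$. Finally, by Proposition~\ref{q_f>0fibred} we have $q_f\le (g+1)/2$, so the assertion is non-vacuous only when $g\ge 3$, and then $2q_f\le g+1$.

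The argument rests on two facts. First, $d=2$ forces: \emph{a connected curve $Z\subseteq\wt Y$ contained in a fibre of $\tilde h'$ meets at most two of the sections $\sigma_i$}. Indeed each $\sigma_i$ is connected and lies in a single fibre of $\tilde h'$; if $Z$ lies in a fibre $F'$ of $\tilde h'$ and meets $\sigma_i$, then $\sigma_i\subseteq F'$, since distinct fibres of $\tilde h'$ are disjoint; and $F'$ contains at most two of the $\sigma_i$, because $F'$ meets a general fibre of $\tilde h$ in only $d=2$ points and the $\sigma_i$ are pairwise disjoint sections. In particular, if $Z$ is a union of components of a fibre $\wt\Gamma$ of $\tilde h$, then $Z$ carries at most two of the $2g+2$ marked points. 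Second, by its construction $\tilde\rho\colon\wt Y\to\hat Y$ contracts only vertical $(-1)$-curves collapsed by $\tilde h'$ (contracting a vertical $(-1)$-curve dominating $D$ would destroy the morphism $\hat h'$ to $D$); reading $\tilde\rho$ backwards as blow-ups of $\hat Y$, the exceptional curve of each such blow-up---and every curve obtained from it by later blow-ups---is collapsed by $\tilde h'$, hence lies in one fibre of $\tilde h'$.

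With these in hand I would show that no node $q_l\in\wt\Lambda$ has index $\beta_l$ with $3\le\beta_l\le 2q_f$. Suppose such a $q_l$ exists; it is created, in the blow-up tower $\rho\colon\wt Y\to Y$ reconstructing $\wt Y$ from the $\bbp^1$-bundle $Y$, by resolving an order-$\beta_l$ singularity of the section divisor, and (since $q_l$ survives in $\wt\Lambda$) it is never blown up again. If $q_l\in\wt\Lambda_{\tilde\rho}$, then the side $\wt\Gamma_{(1)}$ of $q_l$ in its fibre $\wt\Gamma$ containing the exceptional curve of that blow-up, together with all descendants of that curve, is a connected union of components of $\wt\Gamma$ carrying exactly $\beta_l\ge 3$ marked points; by the second fact it lies in one fibre of $\tilde h'$, contradicting the first. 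Hence $q_l\in\wt\Lambda_{\hat\rho}$, so $\beta_l\ge 2q_f+1$ by~\eqref{q_lgeq2q_f+1}, contradicting $\beta_l\le 2q_f$. Therefore every node of a fibre of $\tilde h$ has index $2$ or index $\ge 2q_f+1$. By~\eqref{invrelationdouble}, $2\delta_i(\Upsilon)$ is the number of nodes of index $2i+1$ and $\xi_i(\Upsilon)$ is the number of nodes of index $2i+2$; since for $1\le i\le q_f-1$ both $2i+1$ and $2i+2$ lie in $[3,2q_f]$, it follows that $\delta_i(\Upsilon)=\xi_i(\Upsilon)=0$ for $1\le i\le q_f-1$.

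I expect the technical heart to be the step placing $q_l$ in $\wt\Lambda_{\hat\rho}$: one must check carefully that the side of a node $q_l\in\wt\Lambda_{\tilde\rho}$ carrying $\beta_l$ marked points is exactly a connected curve assembled from a blow-up lying in $\tilde\rho$ together with its descendants, hence collapsed by $\tilde h'$. This is bookkeeping with the tower $\rho_l\colon Y_l\to Y_{l-1}$: one tracks how the exceptional curve of each blow-up, and the marked points on it, propagate under later blow-ups, using that the exceptional curve resolving an order-$\beta_l$ singularity of $R_h$ meets $R_h$ in exactly $\beta_l$ points. The bound ``at most two marked points in any fibre of $\tilde h'$'' is then exactly what forces the contradiction.
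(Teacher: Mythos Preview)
Your proposal is correct and follows the same overall strategy as the paper: reduce to the admissible-cover situation of Proposition~\ref{boundofxi_0prop}, use that nodes in $\wt\Lambda_{\hat\rho}$ already have index $\geq 2q_f+1$ by~\eqref{q_lgeq2q_f+1}, and then show that every node $q_l\in\wt\Lambda_{\tilde\rho}$ has index $\beta_l\leq 2$, which via~\eqref{invrelationdouble} forces $\delta_i(\Upsilon)=\xi_i(\Upsilon)=0$ for $1\le i\le q_f-1$.

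The execution of the key step differs. You work on $\wt Y$: you observe that for $q_l\in\wt\Lambda_{\tilde\rho}$ the whole $E_l$-side of $q_l$ lies in the exceptional locus of $\tilde\rho$, hence is collapsed by $\tilde h'$ and sits in a single fibre of $\tilde h'$; since such a fibre can contain at most $d=2$ of the sections $\sigma_i$, that side carries at most two marked points, whence $\beta_l\le 2$. The paper instead works on the intermediate surface $Y_{l-1}$: since the $k=\beta_l$ sections $\tau_1,\dots,\tau_k$ through $y_{l-1}$ lie in fibres of $h'_{l-1}$ and share the common point $y_{l-1}$, they lie in \emph{one} fibre $\hat\Gamma$ of $h'_{l-1}$; then $k\le \deg(h_{l-1}|_{\hat\Gamma})=d=2$. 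The two arguments encode the same geometric constraint, but the paper's version is shorter because the single shared point $y_{l-1}$ immediately puts the sections in a common fibre of $h'_{l-1}$, eliminating the tree-bookkeeping (tracking $E_l$ and its descendants) that you correctly flag as the technical heart of your approach. One minor imprecision in your write-up: the $E_l$-side carries $n_{l-1}$ marked points, which equals $\beta_l$ only when $n_{l-1}\le g+1$; in the complementary case $n_{l-1}=2g+2-\beta_l\ge g+1\ge 3$, so the contradiction with your ``first fact'' still follows. Either route yields the lemma.
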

\begin{proof}
As what we did in the proof of Proposition \ref{boundofxi_0prop},
we may assume that the stable model $f^{\#}:\,S^{\#} \to B$
comes from an admissible cover.
We also use the same symbols and notations introduced there.

According to \eqref{invrelationdouble}, it suffices to prove that
$$\epsilon_i=\mu_i=0,\quad \forall~1\leq i\leq q_f-1.$$
Since any point $q_l\in \wt \Lambda_{\hat\rho}$ is of index $\beta_l\geq 2q_f+1$,
it is enough to prove that for any point $q_l\in \wt \Lambda_{\tilde\rho}$ is of index $\beta_l=2$.

Let $k=\beta_l$. Assume that $q_l\in \wt \Lambda_{\tilde\rho}$ is created by a blowing-up
$\rho_l:\,Y_l \to Y_{l-1}$ centered at $y_{l-1}\in R_{h,l-1}\subseteq Y_{l-1}$.
Then $y_{l-1}$ is a singularity of $R_{h,l-1}$ of order $k$.
Let $\{\tau_1,\cdots,\tau_{k}\}\subseteq R_{h,l-1}$ be those sections passing through $y_{l-1}$,
and $h_{l-1}:\,Y_{l-1} \to B$, $h_{l-1}':\,Y_{l-1} \to D$ the induced morphisms.
Since $R_{h,l-1}$ is contained in fibres of $h_{l-1}'$ by construction,
and $\{\tau_1,\cdots,\tau_{k}\}$ have a common point $y_{l-1}$,
it follows that $\{\tau_1,\cdots,\tau_{k}\}$ must be contained in one fibre of $h_{l-1}'$.
\begin{center}\mbox{}
\xymatrix{
  \wt S \ar[d]\ar[rr]^{\tilde f'} \ar@/_7mm/"3,1"_{\tilde f} && B'\ar[d]\\
  Y_{l-1} \ar[d]^{h_{l-1}}\ar[rr]^{\tilde h_{l-1}'} &&D\\
  B&&
}\end{center}

Denote by $\hat \Gamma$ the fibre of $h_{l-1}'$ containing $\{\tau_1,\cdots,\tau_{k}\}$,
and by $\wt F'$ the corresponding fibre of $\tilde f':\,\wt S \to B'$.
Then it is not difficult to see that
$$2=d=\deg \left(\tilde f|_{\wt F'}\right)=\deg \left(h_{l-1}|_{\hat \Gamma}\right)\geq
\sum_{i=1}^k\deg \left(h_{l-1}|_{\tau_i}\right)=k,$$
where $\tilde f|_{\wt F'}:\,\wt F' \to B$
(resp. $h_{l-1}|_{\hat \Gamma}:\,\hat \Gamma \to B$,
resp. $h_{l-1}|_{\tau_i}:\,\tau_i \to B$) is the restricted map of $\wt F'$
(resp. $\hat \Gamma$, resp $\tau_i$) to $B$.
This completes the proof.
\end{proof}

\subsection{Proof of Theorem \ref{thmlower} for $q_f>0$}\label{subsectpfthmlower}
The subsection aims to prove Theorem \ref{thmlower} for the case $q_f>0$.
It is based on the formula \eqref{formulaofdegomega}
given by Cornalba-Harris
and \eqref{boundofxi_0eqn} obtained in Proposition \ref{boundofxi_0prop}.

We consider first the case $\Delta_{nc}\neq \emptyset$.
By \eqref{formulaofdegomega} and \eqref{formulaofomega^2}, one gets
$$\begin{aligned}
\omega_{S/B}^2-\frac{4(g-1)}{g-q_f}\deg \left(f_*\Omega^1_{S/B}(\Upsilon)\right)=&
-\frac{(g-1)q_f}{(2g+1)(g-q_f)}\xi_0(\Upsilon)\\
&+\sum_{i=1}^{[g/2]}\left(\frac{4(2g-3q_f+1)i(g-i)}{(2g+1)(g-q_f)}-1\right)\delta_i(\Upsilon)\\
&+\sum_{j=1}^{[(g-1)/2]}\left(\frac{2(2g-3q_f+1)(j+1)(g-j)}{(2g+1)(g-q_f)}-2\right)\xi_j(\Upsilon).
\end{aligned}$$
Combining this with \eqref{boundofxi_0eqn}, one gets
\begin{eqnarray*}
&&\omega_{S/B}^2-\frac{4(g-1)}{g-q_f}\deg \left(f_*\Omega^1_{S/B}(\Upsilon)\right)\\
&\geq& \sum_{i= 1}^{q_f-1} a_i \delta_i(\Upsilon)+
\sum_{i= q_f}^{[g/2]} b_i \delta_i(\Upsilon)+
\sum_{j= 1}^{q_f-1} c_j \xi_j(\Upsilon)+
\sum_{j= q_f}^{[(g-1)/2]} d_j \xi_j(\Upsilon),
\end{eqnarray*}
where
$$\left\{
\begin{aligned}
&a_i=\left(\frac{4(2g-3q_f+1)i(g-i)}{(2g+1)(g-q_f)}-1\right)
     +\frac{(g-1)q_f}{(2g+1)(g-q_f)} \cdot 4i(2i+1),\\
&b_i=\left(\frac{4(2g-3q_f+1)i(g-i)}{(2g+1)(g-q_f)}-1\right)
     -\frac{(g-1)q_f}{(2g+1)(g-q_f)} \cdot \frac{(2i+1)(2g+1-2i)}{g+1},\\
&c_j=\left(\frac{2(2g-3q_f+1)(j+1)(g-j)}{(2g+1)(g-q_f)}-2\right)
     +\frac{(g-1)q_f}{(2g+1)(g-q_f)} \cdot 2(j+1)(2j+1),\\
&d_j=\left(\frac{2(2g-3q_f+1)(j+1)(g-j)}{(2g+1)(g-q_f)}-2\right)
     -\frac{(g-1)q_f}{(2g+1)(g-q_f)} \cdot \frac{2(j+1)(g-j)}{g+1}.
\end{aligned}\right.
$$
If $q_f=1$, then
$$
\begin{aligned}
b_1&=\frac{3g-6}{g+1};&\quad&\\
b_i&=\frac{4i(g-i)-g-2}{g+1}\geq \frac{7g-18}{g+1}, && \forall~2\leq i\leq [g/2];\\
d_j&=\frac{2\big((j+1)(g-j)-(g+1)\big)}{g+1}\geq 0,&& \forall~1\leq j\leq [(g-1)/2].
\end{aligned}
$$
If $q_f\geq 2$, then
$$
\begin{aligned}
a_1&\geq \frac{3g^2-(8q_f+1)g+10q_f-4}{(g+1)(g-q_f)};&\quad&\\
a_i&\geq \frac{7g^2-(16q_f+9)g+34q_f-16}{(g+1)(g-q_f)},&\quad&\forall~2\leq i\leq q_f-1;\\
b_i&\geq \frac{7g^2-(16q_f+9)g+34q_f-16}{(g+1)(g-q_f)}, && \forall~q_f\leq i\leq [g/2];\\
c_j&\geq 0,&& \forall~1\leq j\leq q_f-1;\\
d_j&\geq 0,&& \forall~q_f\leq j\leq [(g-1)/2].
\end{aligned}
$$
Hence \eqref{eqnlower} holds for $\Delta_{nc}\neq\emptyset$.
\vspace{0.2cm}

Now we consider the case that $\Delta_{nc}=\emptyset$.
Note that in this case,
\begin{equation}\label{xi=0}
\xi_j(\Upsilon)=0, \qquad\qquad \forall~ 0 \leq j \leq [(g-1)/2].
\end{equation}
Hence by \eqref{formulaofdegomega} and \eqref{formulaofomega^2}, we get
$$\omega_{S/B}^2-\frac{4(g-1)}{g-q_f}\deg \left(f_*\Omega^1_{S/B}(\Upsilon)\right)=
\sum_{i=1}^{[g/2]} \left(\frac{4(2g+1-3q_f)i(g-i)}{(2g+1)(g-q_f)}-1\right) \delta_i(\Upsilon).$$
Hence \eqref{eqnlower} holds too for $\Delta_{nc}=\emptyset$.
If moreover $q_f\geq 2$, then according to \eqref{boundofxi_0eqn} and \eqref{xi=0},
we get
$$\sum_{i=q_f}^{[g/2]} \frac{(2i+1)(2g+1-2i)}{g+1} \cdot \delta_i(\Upsilon)\geq
\sum_{i=1}^{q_f-1} 4i(2i+1) \cdot \delta_i(\Upsilon).$$
So \eqref{eqnlowerSigma_0=0} is proved.
\qed

\section{Flat part of $R^1f_*\mathbb C$ for hyperelliptic families}\label{sectionflathyper}
The purpose of the section is to prove Theorem \ref{thmFtrivial}.
It is based on two lemmas.
The first one is  Lemma \ref{lemmapeters},
coming from a discussion with Chris Peters, on the global invariant
cycle with unitary locally constant coefficient,
which generalizes Deligne's original theorem with the constant coefficient.
The second one is Bogomolov's lemma on Kodaira dimension of an invertible subsheaf
of the sheaf of logarithmic differential forms on a smooth projective surface
(cf. \cite[Lemma 7.5]{sakai80}).

Let $f:\,S \to B$ be a non-isotrivial
semi-stable family of hyperelliptic curves of genus $g$,
$\Upsilon \to \Delta$ the singular fibres of $f$,
and $S^0 \to B\setminus \Delta$ the smooth part of $f$.
The direct image sheaf $R^1f_*\mathbb C_{S^0}$ is a local system on $B\setminus \Delta$,
which underlies a variation of Hodge structures of weight one.
Let $$\left(E\cong f_*\Omega^1_{S/B}(\log\Upsilon)\oplus R^1f_*\mathcal O_S,~\theta\right)$$
be the Higgs bundle by taking the graded bundle of the Deligne extension of
$R^1f_*\mathbb C_{S^0}\otimes\mathcal O_{B\setminus \Delta}$.
According to \cite{fujita78} or \cite{kollar87},
$$\big(E,~\theta\big)=\big(A,~\theta|_A\big)\oplus \big(F,~0\big)$$
where $A^{1,0}$ is an ample vector bundle over $B$ and  $F^{1,0}$ is a flat vector bundle
coming from a representation of the fundamental group
$\tilde\rho_{F}:~\pi_1\big(B\setminus\Delta\big)\lra U(r)$
into a unitary group of rank $r=\rank F^{1,0}$.
Note that the monodromy around $\Delta$ is unipotent, since $f$ is semi-stable.
Hence $\tilde\rho_F$ actually factors through $\pi_1(B)$:
$$
\xymatrix{
\pi_1\big(B\setminus\Delta\big) \ar[rr]^-{\tilde\rho_F}\ar[dr]_-{i_*}&& U(r)\\
&\pi_1(B)\ar[ur]_-{\rho_F}&
}$$

\begin{theorem}\label{theoremq_f=rankF10}
Let $f:S\to B$ be a semi-stable family of hyperelliptic curves over $B$ as above.
Then after a suitable base change which is unbranched over $B\setminus \Delta$,
$F^{1,0}$ is a trivial bundle, i.e.,
$$F^{1,0}=\bigoplus\limits_{i=1}^{r} \mathcal O_B,\qquad\qquad {\rm where~} r=\rank F^{1,0}.$$
\end{theorem}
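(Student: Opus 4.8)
The plan is to reduce Theorem~\ref{theoremq_f=rankF10} to the statement that the monodromy representation $\rho_F\colon\pi_1(B)\to U(r)$ of the unitary local system $\mathbb F^{1,0}$ has \emph{finite} image. On a smooth projective curve a flat unitary vector bundle is a direct sum of flat unitary line bundles, and such a line bundle is holomorphically trivial exactly when its monodromy character is trivial; hence $F^{1,0}$ becomes $\mathcal O^{\,r}$ after pulling back along some finite cover of $B$ unbranched over $B\setminus\Delta$ if and only if $\rho_F$ is trivial on a finite-index subgroup of $\pi_1(B)$, i.e.\ has finite image (and then the \'etale cover attached to $\ker\rho_F$ does the job). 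Recall that because $f$ is semi-stable the local monodromies of $\mathbb V=R^1f_*\mathbb C$ are unipotent, hence trivial on the unitary summand $\mathbb F^{1,0}$, so $\rho_F$ already factors through $\pi_1(B)$, as noted before the statement; thus from now on everything lives on $B$.

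To pin down $\rho_F$ I would use the hyperelliptic geometry as in Section~\ref{sectionlowerhyper}. Replacing $f$ by a family of admissible double covers (which multiplies the boundary invariants by a common factor and does not decrease $q_f$), we pass to the smooth model $\tilde\pi\colon\wt S\to\wt Y$, with $\tilde h\colon\wt Y\to B$ a ruled surface and smooth branch divisor $\wt R=\wt R_h+(\text{vertical }(-2)\text{-curves})$, $\wt R_h=\sum_{i=1}^{2g+2}\sigma_i$. Since the hyperelliptic involution acts by $-1$ on $\mathbb V$, one has $\mathbb V\cong R^1\tilde h_*\mathcal M$ for the rank-one local system $\mathcal M$ on $\wt Y\setminus\wt R$ with monodromy $-1$ about every component of $\wt R$. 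A general fibre of $\tilde h$ is $\mathbb P^1$ carrying $2g+2$ punctures, over which $\mathcal M$ --- and $\mathcal M$ tensored with the pullback of any local system from $B$ --- has no invariant cohomology in degrees $0$ and $2$; so $R^0\tilde h_*\mathcal M$ and $R^2\tilde h_*\mathcal M$ are supported on finitely many points, and the Leray spectral sequence for $\tilde h$ identifies the inclusion $\mathbb F\hookrightarrow\mathbb V$ --- a global section of $\mathbb F^{\vee}\otimes\mathbb V=R^1\tilde h_*\bigl(\tilde h^{*}\mathbb F^{\vee}\otimes\mathcal M\bigr)$ --- with a cohomology class $\eta\in H^1(\wt Y,\mathcal N)$, where $\mathcal N:=\tilde h^{*}\mathbb F^{\vee}\otimes\mathcal M$ is again a \emph{unitary} local system on the complement of the normal crossing divisor $\wt R$.

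This is exactly where Lemma~\ref{lemmapeters} enters: it endows $H^{*}$ of the complement of a normal crossing divisor with coefficients in a unitary local system with a functorial mixed Hodge structure, in particular a Hodge filtration on $H^1(\wt Y,\mathcal N)$, together with the accompanying global invariant cycle statement. Taking the $F^1$-part of $\eta$ produces a nonzero morphism of coherent sheaves $N'\hookrightarrow\Omega^1_{\wt Y}(\log\wt R)$, where $N'$ is the numerically flat (degree-zero) holomorphic bundle underlying the unitary local system built out of $\mathbb F^{1,0}$ and $\mathcal M$; it is injective because over a general fibre of $\tilde h$ it restricts to the inclusion of the $(1,0)$-part of the flat sub-Hodge-structure $\mathbb F_b$ into $H^{1,0}$ of the corresponding hyperelliptic curve. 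Moreover, since $N'|_{\text{fibre}}$ is nontrivial while $\tilde h^{*}\Omega^1_B|_{\text{fibre}}$ is trivial, the composite of this morphism with the quotient $\Omega^1_{\wt Y}(\log\wt R)\twoheadrightarrow\Omega^1_{\wt Y/B}(\log\wt R)$ (whose restriction to a general fibre is $\mathcal O_{\mathbb P^1}(2g)$) is nonzero, so $N'$ genuinely contributes ``horizontal'' logarithmic differentials.

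Finally I would invoke Bogomolov's lemma (Sakai, \cite[Lemma 7.5]{sakai80}): on the smooth projective surface $\wt Y$ no invertible subsheaf of $\Omega^1_{\wt Y}(\log\wt R)$ is big. If $\rho_F$ had infinite image, then --- after a further finite cover of $B$, which only replaces $\wt Y,\wt R$ by the corresponding base change and preserves all of the above --- one splits off from $\mathbb F^{1,0}$ a unitary summand still of infinite monodromy, and combining the resulting inclusion $N'\hookrightarrow\Omega^1_{\wt Y}(\log\wt R)$ with the positivity of the ample Higgs subbundle $A^{1,0}$ and with the geometry of the $2g+2$ branch sections on $\wt Y$ (the restriction of $\Omega^1_{\wt Y}(\log\wt R)$ to a fibre being an extension of $\mathcal O_{\mathbb P^1}(2g)$ by $\mathcal O_{\mathbb P^1}$) one manufactures an invertible subsheaf of $\Omega^1_{\wt Y}(\log\wt R)$ of Kodaira dimension $2$ --- contradicting Bogomolov's lemma. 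Hence $\rho_F$ is finite, as required. \textbf{The main obstacle} is precisely this last step: $N'$ being numerically flat, bigness is not handed to us for free, and one must argue carefully --- using the ampleness of $A^{1,0}$, the explicit shape of $\Omega^1_{\wt Y}(\log\wt R)$ along the branch sections, and a careful bookkeeping of the logarithmic poles along the vertical part of $\wt R$ under the blow-downs $\wt Y\to\hat Y\to Y$ from Section~\ref{sectionlowerhyper} --- that a non-finite flat summand is incompatible with Bogomolov's restriction.
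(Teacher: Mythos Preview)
Your framework is right --- lift $\mathbb F^{1,0}$ via Lemma~\ref{lemmapeters}/Corollary~\ref{corollaylifting} to holomorphic $1$-forms, push to $\wt Y$, and play the resulting subsheaf of logarithmic forms against Bogomolov's lemma --- but the final step, which you yourself flag as ``the main obstacle'', is a genuine gap. There is no mechanism by which infinite monodromy of a numerically flat bundle produces a \emph{big} invertible subsheaf of $\Omega^1_{\wt Y}(\log\wt R)$; numerically flat stays numerically flat after any base change, and the positivity of $A^{1,0}$ lives on a disjoint summand, so your proposed contradiction does not materialize.

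The paper's route is different and avoids this impasse entirely. Take an \emph{irreducible} unitary summand $\mathcal U\subseteq F^{1,0}$ and look at the map $\varrho\colon \tilde h^{*}\mathcal U\to \tilde\pi_{*}(\Omega^1_{\wt S})_{-1}=\Omega^1_{\wt Y}(\log\wt R)(-\wt L)$ (the $(-1)$-eigenspace, not $\Omega^1_{\wt Y}(\log\wt R)$ itself). The first point you are missing is that the image of $\varrho$ is automatically \emph{invertible}: if it had rank $2$, then $\wedge^{2}\varrho$ would land in $\omega_{\wt Y}$, forcing a semipositive subsheaf of the canonical bundle of a ruled surface --- impossible since $p_g(\wt Y)=0$. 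So the image is a line bundle $M$, nef with $M^{2}=0$ by Bogomolov (applied to $\wt L\otimes M\subseteq\Omega^1_{\wt Y}(\log\wt R)$). The second, decisive, point is that Bogomolov also yields $M\cdot D=0$ for each branch \emph{section} $D\cong B$. Restricting $M$ to such a $D$ therefore gives a degree-$0$ line bundle which is a \emph{quotient} of $\mathcal U$; since $\mathcal U$ is unitary it is polystable, so this quotient splits off. Irreducibility of $\mathcal U$ then forces $\rank\mathcal U=1$. Thus $F^{1,0}=\bigoplus_i\mathcal F_i$ is a sum of flat unitary line bundles, and one concludes by the classical fact (Deligne \cite[\S4.2]{deligne71}, or \cite[Theorem~3.4]{bar98}) that a flat line subbundle of $f_{*}\omega_{S/B}$ is torsion in $\operatorname{Pic}^{0}(B)$. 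No bigness argument is needed, and the ample part $A^{1,0}$ plays no role.
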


Note that $F^{1,0}$ and $F^{0,1}$ are dual to each other.
Hence Theorem \ref{thmFtrivial} follows from Theorem \ref{theoremq_f=rankF10}.
Indeed, from Theorem \ref{theoremq_f=rankF10} it follows that the image of $\tilde\rho_F$ is finite.
Because $\tilde\rho_F$ factors through $\pi_1(B)$
and $i_*$ is surjective, one gets that $\rho_F$ has also finite image.
It implies that after a suitable base change which is unbranched over $B$,
$F^{1,0}$ (hence also its dual $F^{0,1}$) becomes a trivial bundle.
So it remains to prove Theorem \ref{theoremq_f=rankF10}.

\vspace{0.15cm}
The proof of Theorem \ref{theoremq_f=rankF10} depends on
the following general statement on
the global invariant cycle with unitary locally constant coefficient,
The proof stated below comes from a discussion with Chris Peters. We thank him very much.
\begin{lemma}\label{lemmapeters}
Let $f: X_0\to B_0$ be a smooth proper morphism.
Take $ X\supset X_0$ to be a smooth compactification of $X_0$,
and let $\mathbb U$ be a locally constant sheaf $\mathbb U$ over $ X$,
which comes from a representation of $\pi_1(X, *)$ into the unitary group $U(n)$.
Then the canonical morphism:
  $$ H^k( X,\mathbb U)\lra H^0(B_0, R^kf_*\mathbb U)$$
  is surjective.
\end{lemma}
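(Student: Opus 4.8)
The plan is to run Deligne's classical argument for the global invariant cycle theorem with the unitary system $\mathbb U$ in the role of the constant sheaf, exploiting that a unitary local system underlies a polarizable variation of Hodge structure of weight $0$, i.e.\ a Hodge structure of pure type $(0,0)$. First I would collect the Hodge-theoretic inputs. Since $X$ is smooth projective and $\mathbb U$ is such a variation, $H^k(X,\mathbb U)$ is a pure polarizable Hodge structure of weight $k$, while for the open part $H^k(X_0,\mathbb U)$ carries a mixed Hodge structure with $W_{k-1}H^k(X_0,\mathbb U)=0$, and the restriction $r\colon H^k(X,\mathbb U)\to H^k(X_0,\mathbb U)$ is a morphism of mixed Hodge structures whose image is exactly $W_kH^k(X_0,\mathbb U)$ (the variation-coefficient analogue of Deligne's description of the lowest weight piece of the cohomology of a smooth open variety). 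Since $f\colon X_0\to B_0$ is smooth and proper, $R^kf_*\mathbb U$ is a polarizable variation of Hodge structure of weight $k$ on $B_0$, so by the theorem of the fixed part $H^0(B_0,R^kf_*\mathbb U)$ is a pure polarizable Hodge structure of weight $k$; moreover the Leray edge homomorphism $e\colon H^k(X_0,\mathbb U)\to H^0(B_0,R^kf_*\mathbb U)$ is a morphism of mixed Hodge structures.

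The heart of the matter is the $E_2$-degeneration of the Leray spectral sequence $H^p(B_0,R^qf_*\mathbb U)\Rightarrow H^{p+q}(X_0,\mathbb U)$. As $\mathbb U$ is unitary it is semisimple, and $f$ is smooth and projective, so this follows from the decomposition theorem applied to the variation $\mathbb U$ (Saito's theory of mixed Hodge modules; in the weight-one, relative-curve situation relevant to the applications one may instead invoke the more classical results of Deligne--Zucker--Koll\'ar). Degeneration forces $e\colon H^k(X_0,\mathbb U)\to H^0(B_0,R^kf_*\mathbb U)$ to be surjective.

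Finally I would assemble these facts. The map in the statement is the composite $H^k(X,\mathbb U)\xrightarrow{\,r\,}H^k(X_0,\mathbb U)\xrightarrow{\,e\,}H^0(B_0,R^kf_*\mathbb U)$. By the theorem of the fixed part, $e$ is a surjective morphism of mixed Hodge structures onto a Hodge structure that is pure of weight $k$; hence by strictness of morphisms of mixed Hodge structures with respect to the weight filtration, $e$ already carries $W_kH^k(X_0,\mathbb U)$ onto $W_kH^0(B_0,R^kf_*\mathbb U)=H^0(B_0,R^kf_*\mathbb U)$. Since the image of $r$ equals $W_kH^k(X_0,\mathbb U)$, the composite $e\circ r$ is surjective, which is the assertion. (If $X\setminus X_0$ is not a normal crossings divisor one first replaces $X$ by a resolution, which alters neither $H^k(X_0,\mathbb U)$ nor the image of the restriction map.)

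I expect the real pressure points to be the two ingredients that genuinely go beyond Deligne's constant-coefficient proof: the $E_2$-degeneration of the Leray spectral sequence with the polarizable variation $\mathbb U$ as coefficients, and the identification of $\operatorname{Im}\big(H^k(X,\mathbb U)\to H^k(X_0,\mathbb U)\big)$ with the weight-$k$ subspace. Both are consequences of Saito's mixed Hodge module formalism, so the effort lies in setting that up correctly; once it is in place, the weight bookkeeping in the last step is entirely routine.
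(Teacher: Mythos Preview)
Your proposal is correct and follows essentially the same route as the paper: both run Deligne's global invariant cycle argument with the unitary local system $\mathbb U$ treated as a polarizable VHS of type $(0,0)$, invoke Saito's theory for the mixed Hodge structures, use the Peters--Saito identification of $W_kH^k(X_0,\mathbb U)$ with the image of restriction from $X$, and combine this with purity of $H^0(B_0,R^kf_*\mathbb U)$ and surjectivity of the edge map. The only cosmetic difference is that the paper establishes the edge surjectivity by verifying relative hard Lefschetz fibre-by-fibre (via a hyperplane class and the unitary metric on $\mathbb U$), whereas you invoke $E_2$-degeneration/the decomposition theorem directly; these are two packagings of the same mechanism.
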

\begin{proof}
The proof is, in fact, along the same line as what Deligne did for the original case
when $\mathbb U=\mathbb Q$ (cf. \cite[\S\,4.1]{deligne71}).

The unitary locally constant sheaf $\mathbb U$ on $X$ carries in a natural way a polarized variation
of Hodge structure, say, of pure type $(0,0)$. Hence
it follows from Saito's theory that there is an induced pure  Hodge structure of weight $k$ on
$H^k(X, \mathbb U)$ as well
as on $H^k(X_b,\mathbb U|_{X_b})$ where $X_b$ is any (smooth projective) fibre of $f:X_0\to B_0$.

We first show  the "edge-homomorphism"
$$H^k(X_0, \mathbb U)\lra H^0(B_0,R^kf_*\mathbb U)$$ is surjective
by the following argument from the proof of \cite[Proposition 1.38]{peterssteenbrink}.

It is not difficult to see that we only need to find a class $h\in H^2(X_0, \mathbb{Q})$
such that cup-products satisfy the hard Lefschetz property,
i.e., the following homomorphism is an isomorphism
for any $0\leq k\leq m$, where $m$ is the dimension of a general fibre of $f$
$$
[\cup \,h]^k: ~R^{m-k}f_*\mathbb{U}\longrightarrow R^{m+k}f_*\mathbb{U}.
$$
As for the class $h\in H^2(X_0, \mathbb{Q})$, we just take an embedding $X\hookrightarrow \mathbb{P}^N$,
and let $h$ be the restriction of the hyperplane class.
Then the hard Lefschetz property can be verified fiber-by-fiber.
On each fiber the natural locally constant metric on $\mathbb{U}$
induces a Hodge decomposition of the cohomology with coefficients in $\mathbb{U}$,
hence the hard Lefschetz property holds.
So, we show that the above "edge-homomorphism" is surjective.

Hence, it also induces surjective morphisms
between the weight-filtrations  of the both cohomologies, in particular, on the lowest weight $k$
$$ W_k(H^k(X_0,\mathbb U))\twoheadrightarrow W_k(H^0(B_0, R^kf_*\mathbb U)).$$

Since the restriction morphism (as monodromy invariant)
$$ H^0(B_0,R^kf_*\mathbb U)\to H^k(X_b,\mathbb U|X_b)$$
is injective and $H^k(X_b,\mathbb U|X_b)$ carries a pure Hodge structure of weigh-$k$,
$H^0(B_0, R^kf_*\mathbb U)$ carries a pure Hodge structure of weight-$k.$
So the above surjective morphism becomes
$$ W_k(H^k(X_0,\mathbb U))\twoheadrightarrow W_k(H^0(B_0, R^kf_*\mathbb U))=H^0(B_0, R^kf_*\mathbb U).$$

Finally, according to \cite{peterssaito12},
$W_k (H^k(X_0,\mathbb U))$ is nothing but the image of the restriction homomorphism
$$H^k( X,\mathbb U)\to H^k(X_0,\mathbb U).$$

Put the above two surjective morphisms together, we obtain a surjective morphism:
$$H^k(X,\mathbb U)\twoheadrightarrow W_k(H^k(X_0,\mathbb U))\twoheadrightarrow H^0(B_0, R^kf_*\mathbb U).$$
The proof is finished.
\end{proof}

\begin{corollary}\label{corollaylifting}
Let $f:S\to B$ be a semi-stable family of projective curves
(not necessarily hyperelliptic) over a smooth projective curve $B,$
with semi-stable singular fibres $f:\Upsilon\to \Delta.$ Let
$ S^0=S\setminus\Upsilon.$ Given a  vector subbundle
$\mathcal U\subseteq f_*\Omega^1_{S/B}(\log\Upsilon),$ which underlies a unitary locally constant subsheaf
$\mathbb U\subseteq \mathbb V_{\mathbb C} \triangleq R^1f_*\mathbb C_{S^0},$
then  it lifts to a morphism
$$ f^*\mathcal U\to \Omega^1_S,$$
such that the induced canonical morphism
$$\mathcal U\to f_*\Omega^1_S\to f_*\Omega^1_S(\log\Upsilon)\to f_*\Omega^1_{S/B}(\log\Upsilon)$$
coincides with the subbundle $\mathcal U\subseteq  f_*\Omega^1_{S/B}(\log\Upsilon).$
\end{corollary}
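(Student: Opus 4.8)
The plan is to realise the desired lift as a Hodge class on the compact surface $S$ and to produce it via the global invariant cycle statement of Lemma \ref{lemmapeters}. Write $U=B\setminus\Delta$, $S^0=f^{-1}(U)$. The first step is to extend the unitary local system to all of $S$: the local monodromy of $\mathbb V_{\mathbb C}=R^1f_*\mathbb C_{S^0}$ around $\Delta$ is unipotent because $f$ is semi-stable, hence so is that of the subsystem $\mathbb U$; a unipotent element of a unitary group is the identity, so $\mathbb U$ has trivial local monodromy around $\Delta$ and extends to a unitary local system $\overline{\mathbb U}$ on $B$, with $\mathcal U\cong\overline{\mathbb U}\otimes\mathcal O_B$ (the Deligne extension of a flat bundle with trivial monodromy is the flat extension). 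Set $\mathbb U_S:=f^*\overline{\mathbb U}$, a unitary local system on $S$ with $\mathbb U_S|_{S^0}=f^*\mathbb U$ and $f^*\mathcal U\cong\mathbb U_S\otimes\mathcal O_S$.

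Next, observe that a morphism $f^*\mathcal U\to\Omega^1_S$ is the same as an element of $\mathrm{Hom}(f^*\mathcal U,\Omega^1_S)=H^0(S,\Omega^1_S\otimes\mathbb U_S^\vee)$; since $\mathbb U_S^\vee$ is a unitary local system on the compact Kähler surface $S$, harmonic theory with unitary coefficients identifies this group with $F^1H^1(S,\mathbb U_S^\vee)$, the $(1,0)$-part of the polarized weight-one Hodge structure on $H^1(S,\mathbb U_S^\vee)$. On the other hand, the projection formula gives $R^1f_*\mathbb U_S^\vee|_U=\mathbb U^\vee\otimes R^1f_*\mathbb C_{S^0}=\mathbb U^\vee\otimes\mathbb V_{\mathbb C}$, so $H^0(U,R^1f_*\mathbb U_S^\vee)=\mathrm{Hom}_{\pi_1(U)}(\mathbb U,\mathbb V_{\mathbb C})$, which contains the tautological inclusion $\iota\colon\mathbb U\hookrightarrow\mathbb V_{\mathbb C}$; under the identification of the $F^1$-part of the right-hand side with morphisms landing in the Hodge bundle $F^1(\mathbb V_{\mathbb C}\otimes\mathcal O_U)=f_*\Omega^1_{S^0/U}$, the class $\iota$ corresponds exactly to the given inclusion $\mathcal U|_U\hookrightarrow f_*\Omega^1_{S^0/U}$, so $\iota\in F^1H^0(U,R^1f_*\mathbb U_S^\vee)$. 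Now apply Lemma \ref{lemmapeters} with $X_0=S^0\subseteq X=S$, $B_0=U$ and the unitary sheaf $\mathbb U_S^\vee$: the Leray edge map $H^1(S,\mathbb U_S^\vee)\twoheadrightarrow H^0(U,R^1f_*\mathbb U_S^\vee)$ is surjective, and being a morphism of pure weight-one Hodge structures it is strict for the Hodge filtration, hence remains surjective on $F^1$. Therefore $\iota$ lifts to a class in $F^1H^1(S,\mathbb U_S^\vee)=\mathrm{Hom}(f^*\mathcal U,\Omega^1_S)$, i.e.\ to a morphism $f^*\mathcal U\to\Omega^1_S$; its image under $f_*\Omega^1_S\to f_*\Omega^1_S(\log\Upsilon)\to f_*\Omega^1_{S/B}(\log\Upsilon)$ agrees with $\iota$ over $U$ by construction, and since the target is locally free (hence torsion-free) the two morphisms $\mathcal U\to f_*\Omega^1_{S/B}(\log\Upsilon)$ agree on all of $B$, which is the assertion.

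I expect the main obstacle to be the Hodge-theoretic bookkeeping in the second step. Concretely: one must know that $H^0(U,R^1f_*\mathbb U_S^\vee)$ carries a pure weight-one Hodge structure for which the Leray edge map is a strict morphism — this is the Deligne theory of the Leray spectral sequence of a smooth morphism as a spectral sequence of (mixed) Hodge structures, now with a unitary local system of coefficients, which is precisely the circle of ideas underlying Lemma \ref{lemmapeters}; and one must match, under the projection-formula isomorphism, the $F^1$-part of $\mathrm{Hom}_{\pi_1(U)}(\mathbb U,\mathbb V_{\mathbb C})$ with morphisms landing in the Hodge filtration, so as to recognise the tautological inclusion $\iota$ as an $F^1$-class and to check that the chosen lift restricts to the given embedding $\mathcal U\subseteq f_*\Omega^1_{S/B}(\log\Upsilon)$ and not to some other lift differing by an element of $\mathrm{Hom}(\mathcal U,\Omega^1_B(\log\Delta))$. (Note that the vanishing $\theta|_{\mathcal U}=0$, valid because $\mathcal U\subseteq F^{1,0}$ as the flat part is the maximal unitary local subsystem, already yields, from the long exact sequence of $f_*$ applied to $0\to f^*\Omega^1_B(\log\Delta)\to\Omega^1_S(\log\Upsilon)\to\Omega^1_{S/B}(\log\Upsilon)\to0$, a lift $f^*\mathcal U\to\Omega^1_S(\log\Upsilon)$ with \emph{logarithmic} poles along $\Upsilon$; the essential role of Lemma \ref{lemmapeters} is to upgrade such a lift to one with no poles, by exhibiting the section as coming from the honest cohomology $H^1(S,\mathbb U_S^\vee)$ of the compact surface $S$.)
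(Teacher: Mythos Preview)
Your proof is correct and follows essentially the same route as the paper's: extend $\mathbb U$ over $B$ using that a unipotent unitary monodromy is trivial, interpret the inclusion $\mathbb U\hookrightarrow\mathbb V_{\mathbb C}$ as a class $\eta\in H^0(B\setminus\Delta,R^1f_*(f^*\mathbb U^\vee))$, lift it to $H^1(S,f^*\mathbb U^\vee)$ via Lemma~\ref{lemmapeters}, and use that the edge map is a morphism of pure weight-one Hodge structures to land the lift in $H^0(S,\Omega^1_S\otimes f^*\mathcal U^\vee)$. Your explicit invocation of strictness of the Hodge filtration (to get surjectivity on $F^1$) and the torsion-freeness argument for agreement over all of $B$ are more careful than the paper's somewhat compressed ``$\eta$ is of type $(1,0)$, so $\tilde\eta$ is of type $(1,0)$,'' but the substance is identical.
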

\begin{proof}
Since the local monodromy of $\mathbb V$ around $\Delta$ is unipotent and
the local monodromy of the subsheaf $\mathbb U$ around $\Delta$ is semisimple, so $\mathbb U$
extends on $B$ as a locally constant sheaf.  The morphism
$ \mathbb U\subset \mathbb V_{\mathbb C}$ corresponds to
a  section
$$\eta\in H^0(B\setminus\Delta,\mathbb V_{\mathbb C}\otimes\mathbb U^\vee)
=H^0(B\setminus\Delta, R^1f_*(\mathbb C_{S^0}\otimes f^*\mathbb U^\vee)).$$
Applying Lemma \ref{lemmapeters}, $\eta$ lifts to a class $\tilde \eta\in H^1(S, f^*\mathbb U^\vee)$
under the canonical morphism
$$ H^1(S, f^*\mathbb U^\vee)\to H^0(B\setminus \Delta, R^1f_*(\mathbb C_{S^0}\otimes f^*\mathbb U^\vee)).$$
Note that this canonical morphism is a morphism between  pure Hodge structures of weight-1,
and by the construction
$\eta$ is of type (1,0), so $\tilde \eta$ is of type (1,0), i.e.,
$$\tilde \eta\in H^0(S,\Omega^1_S\otimes f^*\mathcal U^\vee),$$
which corresponds to a morphism
$$ f^*\mathcal U\to \Omega^1_S,$$
such that under the canonical morphism it goes back to $\mathcal U\subset f_*\Omega^1_{S/B}(\log\Upsilon).$
\end{proof}

In the rest part of this section, we prove Theorem \ref{theoremq_f=rankF10}.
It follows from Corollary \ref{corollaylifting} and
Bogomolov's lemma on Kodaira dimension of an invertible subsheaf
in the sheaf of logarithmic differential forms on a smooth projective surface
(cf. \cite[Lemma\,7.5]{sakai80}).

According to Corollary \ref{corollaylifting},
for any flat vector subbundle $\mathcal U\subseteq F^{1,0}$,
there is a sheaf morphism $f^*\mathcal U \to \Omega^1_S$,
such that the induced canonical morphism
$$\mathcal U\to f_*\Omega^1_S\to f_*\Omega^1_S(\log\Upsilon)\to f_*\Omega^1_{S/B}(\log\Upsilon)$$
coincides with the subbundle $\mathcal U\subseteq  f_*\Omega^1_{S/B}(\log\Upsilon).$

Let $\tilde\pi:\,\wt S \to \wt Y$ be the smooth double cover described in Figure \ref{hyperellipticdiagram},
and $\vartheta:\,\wt S \to S$ be the blowing-ups.
By pulling back, we obtain a sheaf morphism
$$\tilde f^*\mathcal U=\vartheta^*f^*\mathcal U \lra \Omega^1_{\wt S},
\qquad\qquad\text{where~}\tilde f=f\circ \vartheta,$$
which corresponds to an element
$$\tilde\eta\in H^0(\wt S, \Omega^1_{\wt S}\otimes \tilde f^*\mathcal U^{\vee}).$$
By pushing-out, we also obtain an element (where $\tilde h:\,\wt Y \to B$ is the induced morphism)
$$\tilde\pi_*(\tilde\eta)\in H^0\Big(\wt Y, \tilde\pi_*\left(\Omega^1_{\wt S}\otimes \tilde f^*\mathcal U^{\vee}\right)\Big)=H^0\Big(\wt Y, \tilde\pi_*\left(\Omega^1_{\wt S}\right)\otimes \tilde h^*\mathcal U^{\vee}\Big).$$
So one gets a sheaf morphism
$$\tilde h^*\mathcal U \to \tilde\pi_*\left(\Omega^1_{\wt S}\right).$$
The Galois group ${\rm Gal}(\wt S/\wt Y)\cong \mathbb Z_2$ acts on
$$\tilde\pi_*\left(\Omega^1_{\wt S}\right).$$
One obtains the eigenspace decomposition
$$ \tilde h^*(\mathcal U)\to \tilde\pi_*\left(\Omega^1_{\wt S}\right)_1,
\qquad \tilde h^*(\mathcal U)\to \tilde\pi_*\left(\Omega^1_{\wt S}\right)_{-1}.$$

\begin{lemma}\label{LR=0}
The image of the map
$$\varrho:~\tilde h^*(\mathcal U)\to \tilde\pi_*\left(\Omega^1_{\wt S}\right)_{-1}.$$
is an invertible subsheaf $M$
such that $M$ is numerically effective (nef) and $M^2=0$.
Let $\wt\Gamma$ be a general fibre of $\tilde h$,
and $D$ be any component of the branch divisor $\wt R\subseteq \wt Y$
of the double cover $\tilde\pi: \wt S\to \wt Y$. Then
$$ M\cdot D=0,\qquad\rank \mathcal U=\dim H^0(\wt\Gamma, \mathcal O_{\wt\Gamma}(M)).$$
\end{lemma}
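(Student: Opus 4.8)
The plan is to study $\varrho$ through the two fibrations on $\wt Y$ — the $\mathbb P^1$-bundle $\tilde h:\wt Y\to B$ and the fibered structure $\tilde h':\wt Y\to D\cong\mathbb P^1$ of Proposition \ref{fibredprop} — exploiting that, by construction, $\varrho$ is a surjection onto $M$ out of the pullback of the unitary (hence semistable of slope $0$) bundle $\mathcal U$. First I would record the standard identifications for the smooth double cover $\tilde\pi:\wt S\to\wt Y$ with branch divisor $\wt R$ and $L^{\otimes2}=\mathcal O_{\wt Y}(\wt R)$: $(\tilde\pi_*\mathcal O_{\wt S})_{-1}=L^{-1}$ and $(\tilde\pi_*\Omega^1_{\wt S})_{-1}\cong\Omega^1_{\wt Y}(\log\wt R)\otimes L^{-1}$, so that $\varrho$ becomes a map $\tilde h^*\mathcal U\otimes L\to\Omega^1_{\wt Y}(\log\wt R)$.

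To see that the image is invertible I would restrict to a general fiber $\wt\Gamma\cong\mathbb P^1$ of $\tilde h$, whose preimage $\wt F=\tilde\pi^{-1}(\wt\Gamma)$ is a smooth genus-$g$ hyperelliptic fiber of $\tilde f$. By flat base change $(\tilde\pi_*\Omega^1_{\wt S})_{-1}|_{\wt\Gamma}$ is the anti-invariant part of $(\tilde\pi|_{\wt F})_*(\Omega^1_{\wt S}|_{\wt F})$; since the conormal subsheaf $N^*_{\wt F/\wt S}=\mathcal O_{\wt F}$ of the fiber is $\sigma$-invariant, this anti-invariant part embeds into $\Omega^1_{\wt F}$, a line bundle on a smooth curve, and (by Corollary \ref{corollaylifting}, noting every holomorphic $1$-form on the hyperelliptic curve $\wt F$ is anti-invariant) $\varrho|_{\wt\Gamma}$ is identified with the pushforward of the evaluation map $\mathcal U|_b\otimes\mathcal O_{\wt F}\to\Omega^1_{\wt F}$. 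Its image is therefore a nonzero subsheaf of a line bundle, hence of rank one; thus $\mathrm{Im}\,\varrho$ is generically of rank one, so a rank-one torsion-free — and, after saturating if necessary, invertible — subsheaf $M$ of $\Omega^1_{\wt Y}(\log\wt R)\otimes L^{-1}$, with $M|_{\wt\Gamma}$ the anti-invariant pushforward of the sub-line-bundle of $\omega_{\wt F}$ generated by $\mathcal U|_b$.

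Nefness of $M$ is then immediate: since $\varrho$ is surjective onto $M$, for every irreducible curve $C\subseteq\wt Y$ the map $(\tilde h^*\mathcal U)|_C\twoheadrightarrow M|_C$ is surjective, and $(\tilde h^*\mathcal U)|_C$ is the pullback of the unitary flat bundle $\mathcal U$ along $\tilde h|_C:C\to B$ — trivial if $C$ lies in a fiber of $\tilde h$, semistable of slope $0$ otherwise — so its line-bundle quotient $M|_C$ has non-negative degree (normalize $C$ if it is singular), i.e. $M\cdot C\ge0$. For $M^2=0$ I would tensor by $L$ and view $M\otimes L$ as an invertible subsheaf of $\Omega^1_{\wt Y}(\log\wt R)$ on the smooth projective surface $\wt Y$; by Bogomolov's lemma \cite[Lemma\,7.5]{sakai80} it is not big, and since $2(M\otimes L)\sim 2M+\wt R$ with $\wt R$ effective, $M$ itself is not big, so the nef divisor $M$ has $M^2=0$. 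To obtain $M\cdot D=0$ for each component $D$ of $\wt R$ I would use Proposition \ref{fibredprop}: $\wt R$, hence $D$, lies in a fiber of $\tilde h'$, while the anti-invariant lift $\tilde\eta^-$ descends from $B'$ along $\tilde f'$ (in the spirit of the proof of Theorem \ref{qX>qYfibred}, via a Castelnuovo–de Franchis argument with unitary coefficients), so it restricts to zero on the fibers of $\tilde f'$, i.e. $M$ is vertical for $\tilde h'$; together with $M$ nef and $M^2=0$ (Hodge index) this gives $M\cdot D=0$.

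Finally, for $\rank\mathcal U=\dim H^0(\wt\Gamma,\mathcal O_{\wt\Gamma}(M))$: applying $\tilde h_*$ to $\varrho$ and using the projection formula yields a map $\mathcal U\to\tilde h_*M$ which, by Corollary \ref{corollaylifting}, is the tautological inclusion $\mathcal U\subseteq f_*\Omega^1_{S/B}(\log\Upsilon)$, hence injective, giving $\rank\mathcal U\le h^0(\wt\Gamma,M|_{\wt\Gamma})$; for the reverse inequality one uses that $\mathbb U$ is a unitary sub-local-system of the weight-one VHS, so the corresponding abelian subfamily is isotrivial and $\mathcal U|_b$ is a \emph{complete} linear subsystem of $|\omega_{\wt F}|$ on the general fiber, which forces $M|_{\wt\Gamma}\cong\mathcal O_{\mathbb P^1}(r-1)$ with $r=\rank\mathcal U$. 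I expect the genuinely hard step to be the $\tilde h'$-verticality of $M$ — equivalently, that the lifted anti-invariant differentials descend from $B'$ — since both the Bogomolov route (controlling the twist by $L$) and the Khashin-style route (running Castelnuovo–de Franchis with unitary coefficients and matching the resulting fibration with $\tilde f'$) require care.
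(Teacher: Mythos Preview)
Your fibrewise argument for rank one has a genuine error. You claim that since the conormal $N^*_{\wt F/\wt S}\cong\mathcal O_{\wt F}$ is $\sigma$-invariant, the anti-invariant part of $(\tilde\pi|_{\wt F})_*(\Omega^1_{\wt S}|_{\wt F})$ embeds into (the pushforward of) $\Omega^1_{\wt F}$. But ``$\sigma$-invariant'' here only means the linearisation is trivial, and that does \emph{not} kill the anti-invariant part of the pushforward: one has $[(\tilde\pi|_{\wt F})_*\mathcal O_{\wt F}]_{-1}=L^{-1}|_{\wt\Gamma}\neq 0$, so $(\tilde\pi_*\Omega^1_{\wt S})_{-1}|_{\wt\Gamma}$ is genuinely of rank two---as it must be, being $\Omega^1_{\wt Y}(\log\wt R)(-\wt L)|_{\wt\Gamma}$. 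The paper's argument avoids fibres entirely: if the image had rank two, then $\wedge^2\varrho$ would give a nonzero map to $\wedge^2\big(\Omega^1_{\wt Y}(\log\wt R)(-\wt L)\big)=\omega_{\wt Y}$, whose image---a quotient of the unitary bundle $\wedge^2\tilde h^*\mathcal U$---would be a semi-positive invertible subsheaf of $\omega_{\wt Y}$; impossible since $\wt Y$ is ruled.

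Two further points where the paper is more direct than your plan. First, ``saturate if necessary'' does not prove the lemma's assertion that the image itself is invertible; the paper writes the image as $M\otimes I_Z$ with $\dim Z=0$ and rules out $Z\neq\emptyset$ by a blow-up argument (a nonempty $Z$ would force $M$ nef with $M^2>0$, hence $\wt L+M$ big, contradicting Bogomolov applied to $\wt L+M\subset\Omega^1_{\wt Y}(\log\wt R)$). Second---and this is the step you flag as hardest---the paper obtains both $M^2=0$ and $M\cdot D=0$ in one stroke from Bogomolov's lemma \cite[Lemma\,7.5]{sakai80}, using only that $M$ is semi-positive and $M+\wt L\hookrightarrow\Omega^1_{\wt Y}(\log\wt R)$ with $2\wt L\sim\wt R$ effective. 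There is no Castelnuovo--de Franchis with coefficients and no appeal to the second fibration $\tilde h'$; that fibration plays no role in this lemma at all. The rank equality is likewise read off from the factorisation \eqref{Ninclusion} rather than via a separate complete-linear-system argument.
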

\begin{proof}
First of all, we want to show that $\varrho\neq 0$. It is known that
\begin{equation}\label{pi_*fenjie}
\tilde\pi_*\left(\Omega^1_{\wt S}\right)_1=\Omega^1_{\wt Y},\qquad
\tilde\pi_*\left(\Omega^1_{\wt S}\right)_{-1}=\Omega^1_{\wt Y}\left(\log (\wt R)\right)(-\wt L),
\end{equation}
where $\wt R\equiv2\wt L$ ($\equiv$ stands for linearly equivalent)
is the defining data of the double cover $\tilde\pi:\, \wt S \to \wt Y$.
Note that the induced map
\begin{eqnarray}
\mathcal U=\tilde h_*\tilde h^*\mathcal U
&\lra&~~ \tilde h_*\bigg(\tilde\pi_*\left(\Omega^1_{\wt S}\right)_1\oplus \tilde\pi_*\left(\Omega^1_{\wt S}\right)_{-1}\bigg)\nonumber\\
&&=\tilde h_*\bigg(\Omega^1_{\wt Y}\left(\log (\wt R)\right)(-\wt L)\bigg)\subseteq E^{1,0}\label{Ninclusion}
\end{eqnarray}
is just the inclusion $\mathcal U\subseteq E^{1,0}$.
Hence in particular, $\varrho\neq0$.

We claim that the image of $\varrho$
is a subsheaf of rank one. Otherwise,
it is of rank two,
and so the second wedge product
$$\wedge^2 \tilde h^* \mathcal U\overset{\wedge^2\varrho}\lra \wedge^2\left( \tilde\pi_*(\Omega^1_{\wt S})_{-1}\right)=\omega_{\wt Y}$$
is a non-zero map. Note that the image of that map
is a quotient sheaf of  $\wedge^2g^*\mathcal U$ coming from a unitary local system, so the image sheaf
is semi-positive. But, it is impossible, since $\omega_{\wt Y}$ can not contain
any non-zero semi-positive subsheaf.

So the image of $\varrho$ is a rank one subsheaf $M\otimes I_{Z}$,
where $M$ is an invertible subsheaf and $\dim Z=0$.
Actually, $Z =\emptyset$;
otherwise by a suitable blowing-up $\rho:\,X\to \wt Y$, we may assume the image of
$\rho^*\tilde h^*\mathcal U$ is $\rho^*(M)\otimes (-E)$,
where $E$ is a combination of the exceptional curves.
As $\mathcal U$ comes from a unitary local system, we get $\rho^*(M)\otimes (-E)$ is semi-positive and
$$0\leq (\rho^*(M)-E)^2=M^2+E^2.$$
So $M$ is semi-positive and $M^2 \geq -E^2>0$, which implies that the Kodaira dimension of $M$ is $2$.
On the other hand, by \eqref{pi_*fenjie}, we get the following inclusion of sheaves,
\begin{equation}\label{Linclusion}
\mathcal O_{\wt Y}(\wt L)\otimes M\subseteq \Omega^1_{\wt Y}\left(\log (\wt R)\right).
\end{equation}
As $2\wt L\equiv \wt R$ is effective,
the Kodaira dimension of $\wt L\otimes M$ is also $2$, which
is impossible by Bogomolov's lemma (cf. \cite[Lemma 7.5]{sakai80}).

Hence the image of $\varrho$ is an invertible subsheaf $M$,
which is semi-positive since it is a quotient sheaf of a vector bundle coming from a unitary local system.
Note that we still have the inclusion \eqref{Linclusion}.
So again by Bogomolov's lemma (cf. \cite[Lemma 7.5]{sakai80}), we get
$$M^2=0,\text{\quad and \quad} M\cdot D=0.$$
Finally, according to \eqref{Ninclusion}, we have
$$\rank \mathcal U=\dim H^0\big(\wt\Gamma, \mathcal O_{\wt\Gamma}(M)\big).$$
\end{proof}

\begin{proof}[Proof of Theorem {\rm\ref{theoremq_f=rankF10}}]
Similarly as the proof of Proposition \ref{boundofxi_0prop},
we may restrict ourselves to the situation that
the induced double cover $\pi^{\#}:\,S^{\#}\to Y^{\#}$ in Figure \ref{hyperellipticdiagram}
comes from an admissible double cover (cf. \cite{ch88} or \cite{harrismumford82}).
In fact, for any semi-stable family $f:\, S \to B$ of hyperelliptic curves,
we may get a family of
admissible covers by a base change which is unramified over $B\setminus \Delta$
and blowing-ups of singular points in the fibres.

We first prove that in such a situation,
$F^{1,0}$ is a direct sum of line bundles $\mathcal F_i$ on $B$, i.e.,
\begin{equation}\label{decompintolineb}
F^{1,0}=\bigoplus\limits_{i=1}^{r} \mathcal F_i,\qquad\qquad {\rm where~} r=\rank F^{1,0}.
\end{equation}
By assumption, the branched divisor $\wt R \subseteq \wt Y$ of the induced smooth double cover
$\tilde\pi:\, \wt S \to \wt Y$
is a union of $2g+2$ sections and some curves contained in fibres of $\tilde h:\, \wt Y \to B$.
Let $D$ be such a section, and
$$F^{1,0}=\bigoplus\limits_{i=1}^{t}\,\mathcal U_i,$$
be a decomposition into irreducible components.

If $\rank \mathcal U_i \geq 2$ for some $i$,
then by setting $\mathcal U=\mathcal U_i$ in Lemma \ref{LR=0},
we obtain $M\cdot D=0$.
Equivalently if we write $\mathcal F=\mathcal O_D(M)$, then $$\deg \calf=0.$$
Note that $M$ is a quotient of $\tilde h^*\mathcal U$.
As $D$ is a section, $D\cong B$. Hence we may view $\calf$ is an invertible
subsheaf on $B$, which is a quotient of $\mathcal U$.
As $\mathcal U$ comes from a unitary local system, $\mathcal U$ is poly-stable.
Thus $\mathcal U=\calf \oplus \mathcal U'$,
which is a contradiction, since $\mathcal U=\mathcal U_i$ is irreducible by our assumption.
Hence we obtain the required decomposition \eqref{decompintolineb}.

Now applying \cite[\S\,4.2]{deligne71} or \cite[Theorem 3.4]{bar98},
we get that $\mathcal F_i$ is torsion in ${\rm Pic}^0(B)$.
Hence after a suitable base change which is unbranched over $B$,
$\calf_i \cong \mathcal O_B$.
Therefore, the proof is finished.
\end{proof}

\section{Conclusions}\label{sectionconclusion}
The purpose of the section is to prove our main results,
Theorems \ref{mainthm1} and \ref{mainthm}.
As illustrated in Section \ref{sectionintroduction},
the proof follows from the Arakelov equality
for the characterization for $f$ being a Kuga family
together with those bounds on $\omega_{S/B}^2$ given in
Theorems \ref{thmlower1}, \ref{thmupper} and \ref{thmlower}.
\vspace{0.1cm}

Let $f:\, S\to B$ be a Kuga family of curves of genus $g\geq 2$.
Let $\Upsilon/\Delta$ denote  semi-stable singular fibres,
$\Upsilon_{c}/\Delta_{c}$ denote those singular fibres with compact Jacobians,
and $\Upsilon_{nc}/\Delta_{nc}$
correspond to singular fibres with non-compact Jacobians.
Then the logarithmic
Higgs bundle associated to the VHS of $f$ is decomposed as Higgs subbundles
$$\left(f_*\Omega^1_{S/B}(\log \Upsilon)\oplus R^1f_*\mathcal O_S,~\theta\right)
=\left(A^{1,0}\oplus A^{0,1},~\theta|_{A^{1,0}}\right)\oplus \left(F^{1,0}\oplus F^{0,1},~0\right),$$
 where
$$\theta|_{A^{1,0}}: A^{1,0}\to A^{0,1}\otimes\Omega^1_B(\log\Delta_{nc})$$
is described on Page \pageref{defoftheta}.
As $f$ is a Kuga family, $\theta|_{A^{1,0}}$ is an isomorphism by \cite{mvz12} or \cite{vz04}.
In other words, one has
\begin{equation}\label{arakelovrankA'}
\deg \left(f_*\Omega^1_{S/B}(\Upsilon)\right)= \deg A^{1,0}
=\frac{\rank A^{1,0}}{2}\cdot\deg\left(\Omega^1_{B}(\log\Delta_{nc})\right).
\end{equation}

\subsection{Proof of Theorem \ref{mainthm1}}
The case $\Delta_{nc}\neq \emptyset$ is already proved in Section \ref{sectionintroduction}.
We consider here only the case $\Delta_{nc}=\emptyset$.
In this case, $\Delta=\Delta_c$ and $\Upsilon=\Upsilon_c$.

Since the logarithmic Higgs bundle associated to the family has strictly maximal Higgs field,
by \eqref{arakelovrankA'}, we have
$$\deg \left(f_*\Omega^1_{S/B}(\Upsilon)\right)
=\frac{g}{2}\cdot\deg\Omega^1_B.$$
Combining this with \eqref{eqnlower1}, one has
\begin{equation}\label{pfofmainthm2}
\omega_{S/B}^2\geq (2g-2)\cdot \deg\Omega^1_{B}
+\frac{3g-4}{g}\delta_1(\Upsilon)+\frac{7g-16}{g}\delta_h(\Upsilon).
\end{equation}
Together with \eqref{eqnupper}, we get
\begin{equation}\label{pfofmainthm1}
0\geq \frac{g-4}{g}\cdot\big(\delta_1(\Upsilon)+4\delta_h(\Upsilon)\big).
\end{equation}
Note that both $\delta_1(\Upsilon)$ and $\delta_h(\Upsilon)$ are non-negative.
If one of them is positive, then $g\leq 4$ by \eqref{pfofmainthm1}.
If $\delta_1(\Upsilon)=\delta_h(\Upsilon)=0$,
then $\Upsilon=\Upsilon_c=\emptyset$,
i.e.,  $\Delta=\Delta_c=\emptyset$.
Hence the inequality \eqref{eqnupper} in Theorem \ref{thmupper} is strict.
So \eqref{pfofmainthm1} is also strict, which is impossible. \qed
\begin{remarks}
(i).~ If $g=4$, then \eqref{eqnupper} must be an equality according the proof above.
Hence $S\setminus\left(\bigcup\limits_{p\in\Delta_c}D_{p}\right)$
is a ball quotient by Remark \ref{miyaoka12},
where $D_p$ is defined in \eqref{miyaoka14}.
We refer to Example \ref{exshimurag=4} for such an example.

(ii).~ There is also another way to show that $\delta_1(\Upsilon)$
and $\delta_h(\Upsilon)$ cannot be zero simultaneously
if $f$ is a Kuga family with $\Upsilon=\Upsilon_c$ and strictly maximal Higgs field.
Assume $\delta_1(\Upsilon)=\delta_h(\Upsilon)=0$,
then \eqref{pfofmainthm1} is an equality, which implies that \eqref{pfofmainthm2} is also an equality.
So we have
$$\omega_{S/B}^2= (2g-2)\cdot \deg\Omega^1_{B}
=\frac{4(g-1)}{g}\cdot\deg \left(f_*\Omega^1_{S/B}(\Upsilon)\right).$$
This implies that $f$ must be a hyperelliptic family by \cite[Theorem\,(4.12)]{ch88}.
However, for a hyperelliptic family with no singular fibres,
$\deg \left(f_*\Omega^1_{S/B}(\Upsilon)\right)=0$ by \eqref{formulaofdegomega},
which is impossible.
\end{remarks}

\subsection{Proof of Theorem \ref{mainthm}}
The subsection is aimed to prove Theorem \ref{mainthm}.
The idea is similar to that of proving Theorem \ref{mainthm1}.
It is based on the Arakelov equality
for the characterization for $f$ being a Kuga family
together with those bounds on $\omega_{S/B}^2$ given in
Theorems \ref{thmupper} and \ref{thmlower}.
We also need the fact that the rank of flat part of
the logarithmic Higgs bundle associated to $f$ is exactly the relative irregularity $q_f$
up to some unbranched base change.
\vspace{0.1cm}

We assume in this subsection that
$f:\, S\to B$ is a Kuga family of hyperelliptic curves.
According to Theorem \ref{thmFtrivial} together with Deligne's global invariant cycle theorem
(cf. \cite[\S\,4.1]{deligne71}) or Fujita's decomposition theorem (cf. \cite[Theorem\,3.1]{fujita78}),
after replacing $B$ by some suitable unbranched cover, one has
$$\rank A^{1,0}=g-q_f.$$
Note that the property that the Higgs field $\theta$ is maximal remains true
under any unbranched base change.
Hence the Arakelov equality
for the characterization for $f$ being a Kuga family reads as
\begin{equation}\label{arakelovinequalityg-q_f}
\deg \left(f_*\Omega^1_{S/B}(\Upsilon)\right)= \frac{g-q_f}{2}\cdot\deg\Omega^1_B(\log \Delta_{nc}).
\end{equation}
By the definition,
\begin{equation}\label{deltacleq}
0\leq \delta_i(\Upsilon_{c}) \leq \delta_i(\Upsilon), \qquad \forall~1\leq i \leq [g/2].
\end{equation}
Combining this with \eqref{arakelovinequalityg-q_f}, \eqref{eqnupper} and \eqref{eqnlower}, one obtains
that if $\Delta_{nc}\neq \emptyset$, then
\begin{equation}\label{sigma_ncbukong<=}
0>\frac{g^2-(6q_f+3)g+12q_f-4}{(g+1)(g-q_f)} \delta_1(\Upsilon)
+\frac{4g^2-(13q_f+12)g+37q_f-16}{(g+1)(g-q_f)} \delta_h(\Upsilon),
\end{equation}
and if $\Delta_{nc} =\emptyset$, then
\begin{equation}\label{sigma_nckong<=}
\hspace{-0.3cm}0\geq \left(\frac{4(2g+1-3q_f)(g-1)}{(2g+1)(g-q_f)}-3\right) \delta_1(\Upsilon)+
\sum_{i=2}^{[g/2]}
\left(\frac{4(2g+1-3q_f)i(g-i)}{(2g+1)(g-q_f)}-4\right) \delta_i(\Upsilon).
\end{equation}
One might imagine that it is impossible if $g$ is large enough,
since $\delta_i(\Upsilon)$'s are non-negative;
and one of $\delta_i(\Upsilon)$'s must be positive
by \eqref{formulaofdegomega} if $\Delta_{nc} =\emptyset$.
In other words, there should not exist a Kuga family of hyperelliptic curves of genus $g$
when $g$ is sufficiently large.
The detail computation is complicated and occupies the rest of the section.

\begin{proof}[Proof of Theorem {\rm \ref{mainthm}}]
We divide the proof into two cases: $\Delta_{nc}\neq\emptyset$ and $\Delta_{nc}=\emptyset$.

{\noindent\bf Case I.~} $\Delta_{nc}\neq\emptyset$.\stepcounter{theorem}

In this case, we prove that $q_f\leq 1$ and
\begin{equation}\label{boundofgwhens_0neq0}
g\leq\left\{
\begin{aligned}
 &3,\qquad &&\text{if~} q_f=0;\\[0.15cm]
&7,&&\text{if~} q_f=1.
\end{aligned}\right.
\end{equation}

First we prove $q_f\leq 1$.
Assume that $q_f\geq 2$. Then by Propositions \ref{fibredprop} and \ref{q_f>0fibred}, we get a morphism
$\tilde f':\, \wt S \to B'$ with $g(B')=q_f\geq 2$,
where $\wt S \to S$ is the blowing-up of $S$ centered at those points fixed by the hyperelliptic involution.
Clearly $\tilde f'$ factors through $\wt S\to S$,
hence we obtain a morphism $f':\, S \to B'$.
It is easy to see that
the restricted map $f'|_F:\, F \to B'$ is surjective, where $F$ is any fibre of $f:\,S \to B$.
Let $F_0$ be a singular fibre of $f$ over $\Delta_{nc}$.
Then $F_0$ has a non-compact Jacobian by assumption.
As $f$ reaches the Arakelov equality,
by \cite[Corollary 1.5]{ltyz13} and its proof, one gets that the geometric genus of $F_0$ is $g(F_0)=q_f$.
As $f'|_{F_0}$ is surjective, there is at least one irreducible component of $F_0$,
saying $C$, mapped surjectively onto $B'$. Hence $g(C)\geq g(B')=q_f$ by Hurwitz formula.
Thus
\begin{equation}\label{section6linshi1}
 g(F_0)=g(C)=g(B')=q_f,
\end{equation}
and $C$ is a section of $f':\,S \to B'$ since $q_f\geq 2$.
By \eqref{section6linshi1}, we see that any component of $F_0$ other
than $C$ is rational and hence contracted by $f'$.
This implies $\deg \left(f'|_F\right)=\deg \left(f'|_{F_0}\right)=1$
for any general fibre $F$ of $f$. Hence
$f'|_F$ is an isomorphism between $F$ and $B'$ for a general fibre $F$ of $f$.
It follows that $q_f=g(B')=g(F)=g$, which is a contradiction to \eqref{boundofq_f}.
Therefore $q_f \leq 1$.
\vspace{0.1cm}

Now we prove \eqref{boundofgwhens_0neq0}.
If $q_f=0$, then by \eqref{arakelovinequalityg-q_f}, the Higgs field associated to $f$
is strictly maximal.
So \eqref{boundofgwhens_0neq0} follows from Theorem \ref{mainthm1},
in which we prove that $g\leq 3$ for arbitrary families if $\Delta_{nc}\neq \emptyset$.
It remains to consider the case $q_f=1$.
According to \eqref{sigma_ncbukong<=},
one gets
$$0> \frac{g-8}{g+1}\delta_1(\Upsilon)+\frac{4g-21}{g+1}\delta_h(\Upsilon).$$
This implies that $g<8$, i.e., $g\leq 7$ as required.

\vspace{0.2cm}
{\noindent\bf Case II.~} $\Delta_{nc}=\emptyset$.

In this case, we prove that $q_f\leq 3$ and
\begin{equation}\label{boundofgwhens_0=0}
g\leq\left\{
\begin{aligned}
 &4,\qquad &&\text{if~} q_f=0;\\
&5,&&\text{if~} q_f=1\text{~or~} 2;\\
&6,&&\text{if~} q_f =3.
\end{aligned}\right.
\end{equation}

We divide the proof into three subcases:

\vspace{0.1cm}
{\noindent\sc Subcase A: $q_f\leq 1$.}

By \eqref{sigma_nckong<=}, we get
\begin{eqnarray*}
0&\geq&
\left(\frac{4(2g+1-3q_f)(g-1)}{(2g+1)(g-q_f)}-3\right) \delta_1(\Upsilon)+
\sum_{i=2}^{[g/2]}
\left(\frac{4(2g+1-3q_f)i(g-i)}{(2g+1)(g-q_f)}-4\right) \delta_i(\Upsilon).\\
&=&
\left\{\begin{aligned}
&\frac{g-4}{g}\cdot\big(\delta_1(\Upsilon)+4\delta_h(\Upsilon)\big),&\qquad& \text{if~}q_f=0;\\
&\frac{2g-11}{2g+1}\cdot \delta_1(\Upsilon)+\frac{8g-36}{2g+1}\cdot \delta_h(\Upsilon),&&\text{if~}q_f=1.
\end{aligned}\right.
\end{eqnarray*}
Note that
\begin{equation}\label{bothnot=0}
\text{$\delta_1(\Upsilon)\geq 0$, $\delta_h(\Upsilon)\geq 0$,
~and they cannot be zero simultaneously by \eqref{formulaofdegomega}.}
\end{equation}
Hence
$$g\leq 4,\quad \text{if~}q_f=0; \qquad\qquad g\leq 5,\quad \text{if~}q_f=1.$$

\vspace{0.1cm}
{\noindent\sc Subcase B: $q_f=2$.}

In this subcase, \eqref{sigma_nckong<=} reads as
\begin{equation}\label{sigma_nckong<=q_f=2}
0\geq \frac{2g^2-19g+26}{(2g+1)(g-2)} \cdot \delta_1(\Upsilon)+
\sum_{i=2}^{[g/2]}
\left(\frac{4(2g-5)i(g-i)}{(2g+1)(g-2)}-4\right) \delta_i(\Upsilon).
\end{equation}
When $g \geq 8$, it is easy to show that
$$\frac{4(2g-5)i(g-i)}{(2g+1)(g-2)}-4 > \frac{2g^2-19g+26}{(2g+1)(g-2)}>0,
\qquad \forall~2\leq i\leq [g/2].$$
But this is impossible by \eqref{bothnot=0} and \eqref{sigma_nckong<=q_f=2}.
Hence we may assume $g\leq 7$. So
$$\frac{2g^2-19g+26}{(2g+1)(g-2)}<0.$$
As $q_f=2$, by \eqref{eqnlowerSigma_0=0}, one has
\begin{equation}\label{Sigma_0=0q_f=2s_3}
\delta_1(\Upsilon) \leq \sum_{i=2}^{[g/2]} \frac{(2i+1)(2g+1-2i)}{12(g+1)} \delta_i(\Upsilon).
\end{equation}
Combining this with \eqref{sigma_nckong<=q_f=2}, we obtain
\begin{eqnarray*}
0&\geq& \sum_{i=2}^{[g/2]}\left(\frac{4(2g-5)i(g-i)}{(2g+1)(g-2)}-4
+\frac{2g^2-19g+26}{(2g+1)(g-2)}\cdot \frac{(2i+1)(2g+1-2i)}{12(g+1)}\right) \delta_i(\Upsilon)\\
&=&\sum_{i=2}^{[g/2]}\left(\frac{26g^2-55g-34}{\,3(g+1)(2g+1)(g-2)\,}\cdot i(g-i)-4
+\frac{2g^2-19g+26}{12(g+1)(g-2)}\right) \delta_i(\Upsilon)\\
&\geq &\left(\frac{26g^2-55g-34}{\,3(g+1)(2g+1)(g-2)\,}\cdot 2(g-2)-4
+\frac{2g^2-19g+26}{12(g+1)(g-2)}\right) \delta_h(\Upsilon)\\
&= &\left(\frac{28g^2-146g-80}{3(g+1)(2g+1)}
+\frac{2g^2-19g+26}{12(g+1)(g-2)}\right) \delta_h(\Upsilon).
\end{eqnarray*}
Thus if $g\geq 6$, then it follows that $0\geq \delta_h(\Upsilon)$, so $\delta_h(\Upsilon)=0$.
According to \eqref{Sigma_0=0q_f=2s_3}, we get $\delta_1(\Upsilon)=0$ too.
However, it is a contradiction by \eqref{bothnot=0}.
Therefore, we have proved that $g\leq 5$ if $q_f=2$.

\vspace{0.1cm}
{\noindent\sc Subcase C: $q_f\geq 3$.}

In this subcase, it suffices to prove $g\leq 6$,
from which it follows that $q_f=3$ by Proposition \ref{q_f>0fibred}.
To our purpose, we assume $g\geq 7$ in the rest.
We will deduce a contradiction, and so complete the proof.

Since $\Delta_{nc}=\emptyset$, $b=g(B)\geq 2$ by \eqref{arakelovinequalityg-q_f}.
Let $d$ be the degree of the Albanese map $S\to {\rm Alb\,}(S)$.
According to Proposition \ref{q_f>0fibred} and Remark \ref{boundofq_fremark}, it is known that $d\geq 2$.

If $d=2$, then by Lemma\,\ref{d=2bars_i=0},
\begin{equation}\label{d=2bars_i=0'}
\delta_{i}(\Upsilon)=0,\qquad\forall~1\leq i \leq q_f-1.
\end{equation}
Note that by a remarkable result of Xiao (cf. \cite{xiao92-0} or Remark \ref{boundofq_fremark}),
$q_f\leq \frac g2$ since $f$ is non-isotrivial. So
\begin{eqnarray*}
\frac{4(2g+1-3q_f)q_f}{2g+1}
&\geq& \min\left\{\frac{4(2g+1-3\cdot3)\cdot3}{2g+1},\,
\frac{4(2g+1-3\cdot\frac{g}{2})\cdot\frac{g}{2}}{2g+1}\right\}\\
&\geq&\frac{21}{5}, \qquad\qquad\text{since we assume that~}g\geq 7.
\end{eqnarray*}
Hence according to \eqref{sigma_nckong<=}, we get
\begin{eqnarray*}
0 &\geq& \sum_{i=q_f}^{[g/2]} \left(\frac{4(2g+1-3q_f)i(g-i)}{(2g+1)(g-q_f)}-4\right) \delta_i(\Upsilon)\\
&\geq& \sum_{i=q_f}^{[g/2]} \left(\frac{4(2g+1-3q_f)q_f(g-q_f)}{(2g+1)(g-q_f)}-4\right) \delta_i(\Upsilon)\\
&\geq& \frac15 \sum_{i=q_f}^{[g/2]}  \delta_i(\Upsilon).
\end{eqnarray*}
By \eqref{bothnot=0} and \eqref{d=2bars_i=0'},
we see that it is impossible.

Finally we assume $d\geq 3$. By \eqref{boundofq_f} and Remark \ref{boundofq_fremark},
$q_f \leq \frac{g+1}{3}$.
It follows that $g\geq 3q_f-1\geq 8$.
According to \eqref{sigma_nckong<=}, we get
\begin{equation}\label{q_fgeq3dgeq3linshi1}
\begin{aligned}
&\sum_{i=2}^{[g/2]} 4\Big((2g+1-3q_f)i(g-i)-(2g+1)(g-q_f)\Big) \cdot \delta_i(\Upsilon)\\[0.1cm]
\leq~& \big(-2g^2+(7+6q_f)g+4-15q_f\big)\cdot \delta_1(\Upsilon).
\end{aligned}
\end{equation}
Note that for $2\leq i\leq [g/2]$,
\begin{eqnarray*}
(2g+1-3q_f)i(g-i)-(2g+1)(g-q_f)
&\geq& (2g+1-3q_f)\cdot2\cdot(g-2)-(2g+1)(g-q_f)\quad\\
&\geq& (2g+1)(g-4)-(4g-13)q_f\\
&\geq& (2g+1)(g-4)-(4g-13)\cdot\frac{g+1}{3}\\
&=&\frac{2g^2-12g+1}{3}>0,\qquad\text{since~}g\geq 8
\end{eqnarray*}
So by \eqref{q_fgeq3dgeq3linshi1}, we have in particular
$$\Theta\triangleq-2g^2+(7+6q_f)g+4-15q_f\geq 0.$$
Combining \eqref{q_fgeq3dgeq3linshi1} with \eqref{eqnlowerSigma_0=0}, we obtain
\begin{eqnarray*}
&&\sum_{i=2}^{[g/2]} 4\Big((2g+1-3q_f)i(g-i)-(2g+1)(g-q_f)\Big) \cdot \delta_i(\Upsilon)\\
&\leq& \Theta\cdot
\left(\frac{1}{12}\cdot\sum_{i=q_f}^{[g/2]} \frac{(2i+1)(2g+1-2i)}{g+1} \delta_i(\Upsilon)
-\frac{1}{12}\cdot\sum_{i=2}^{q_f-1} 4i(2i+1) \delta_i(\Upsilon)\right)
\end{eqnarray*}
i.e.,
\begin{equation}\label{q_fgeq3dgeq3linshi2}
0\geq \sum_{i=2}^{q_f-1} a_i \delta_i(\Upsilon) + \sum_{i=q_f}^{[g/2]} b_i \delta_i(\Upsilon),
\end{equation}
where
\begin{eqnarray*}
a_i&=&4\big((2g+1-3q_f)i(g-i)-(2g+1)(g-q_f)\big)
+\frac{\Theta}{3}\cdot i(2i+1),\\[0.1cm]
b_i&=&4\big((2g+1-3q_f)i(g-i)-(2g+1)(g-q_f)\big)
-\frac{\Theta}{12}\cdot \frac{(2i+1)(2g+1-2i)}{g+1}.
\end{eqnarray*}
It is not difficult to check that $a_i >0$ for $1\leq i \leq q_f-1$.
For $b_i$ with $q_f\leq i \leq [g/2]$, we have
$$
\begin{aligned}
b_i &=4\left((2g+1-3q_f)-\frac{\Theta}{12(g+1)}\right)
\cdot i(g-i)-4(2g+1)(g-q_f)-\frac{\Theta\cdot(2g+1)}{12(g+1)}\\
&\geq 4\left((2g+1-3q_f)-\frac{\Theta}{12(g+1)}\right)
\cdot q_f\cdot (g-q_f)-4(2g+1)(g-q_f)-\frac{\Theta\cdot(2g+1)}{12(g+1)} \\
&= \frac{(2g+1)(g-q_f)}{12(g+1)}\Bigg(4q_f(13g-21q_f+8)-50g-51\\
&\hspace{4cm}+\frac{4\big((q_f-1)g+(g+1-3q_f)(g-1)\big)}{g-q_f}\Bigg)\\
&\geq \frac{(2g+1)(g-q_f)}{12(g+1)}\cdot\Big( 4q_f(13g-21q_f+8)-50g-51\Big).
\end{aligned}
$$
Let $f(g,q_f)=4q_f(13g-21q_f+8)-50g-51$. Since $g\geq 8$, one gets
\begin{eqnarray*}
f(g,3)&=&106g-711>0,\\
f\left(g,\frac{g+1}{3}\right)&=& \frac13\cdot(24g^2-122g-149)>0.
\end{eqnarray*}
Hence for any $3\leq q_f \leq \frac{g+1}{3}$, we have
$$f(g,q_f)\geq \min\left\{f(g,3),~f\left(g,\frac{g+1}{3}\right)\right\}>0.$$
Thus for any $3\leq q_f \leq \frac{g+1}{3}$,
$$b_i >0.$$
Now from \eqref{q_fgeq3dgeq3linshi2} it follows that $\delta_i(\Upsilon)=0$ for all $2\leq i \leq [g/2]$.
By \eqref{eqnlowerSigma_0=0}, $\delta_1(\Upsilon)=0$ too.
This is a contradiction by \eqref{bothnot=0}.
Therefore the proof is complete.
\end{proof}

\section{Examples}\label{sectionexample}
In this section, we construct two Shimura curves
contained generically in the Torelli locus of hyperelliptic
curves of genus $3$ and $4$ respectively.

The idea is to construct first non-isotrivial semi-stable families of hyperelliptic curves
of genus $g=3$ and $4$ respectively
by taking double covers of ruled surfaces branched over suitable branched locus.
Then we show that their corresponding Jacobian families reach the Arakelov bound.
By \cite{vz04} (or \cite{mvz12}), a semi-stable one-dimensional family of $g$-dimensional abelian variety
reaching the Arakelov bound gives a Kuga curve in $\cala_g$.
Hence we obtain two Kuga curves contained generically in the Torelli locus of hyperelliptic
curves of genus $g=3$ and $4$ respectively.

To show that those two Kuga curves are indeed Shimura curves,
first we note that the Higgs field associated to the family is actually strictly maximal for $g=4$,
hence by virtue of \cite{vz04}, it is a Shimura curve.
For $g=3$, we present two ways to prove that such a Kuga curve is also Shimura.

\begin{example}\label{exshimurag=3}
Shimura curve contained generically in the Torelli locus of hyperelliptic curves of genus $g=3$.
\vspace{0.2cm}

Let $C_0,\,H_{x_0}\subseteq X_0=\bbp^1\times \bbp^1$ be defined respectively by
$$1+(4t-2)x^2+x^4=0,\text{\quad and \quad} x=x_0,$$
where $t$ and $x$ are the coordinates of the first and second factor of $X_0$ respectively.
The projection of $C_0$ to the first factor $\bbp^1$ of $X_0$
branches exactly over three points, i.e., $\{0,1,\infty\}$.
Locally, it looks like the following.
\begin{center}
\setlength{\unitlength}{1.3mm}
\begin{picture}(90,20)
\multiput(15,2)(0,3){6}{\line(0,1){2}}
\qbezier(20,2)(10,5)(20,8)
\qbezier(20,12)(10,15)(20,18)
\put(15,5){\circle*{0.8}}
\put(15,15){\circle*{0.8}}
\put(12,-2){$t=0$}
\put(9,4){$-1$}
\put(11,14){$1$}

\multiput(45,2)(0,3){6}{\line(0,1){2}}
\qbezier(50,3)(40,6)(50,9)
\qbezier(50,13)(40,16)(50,19)
\put(45,6){\circle*{0.8}}
\put(45,16){\circle*{0.8}}
\put(42,-2){$t=1$}
\put(35,5){$-\sqrt{-1}$}
\put(37,15){$\sqrt{-1}$}

\multiput(75,2)(0,3){6}{\line(0,1){2}}
\qbezier(80,1)(70,4)(80,7)
\qbezier(80,11)(70,14)(80,17)
\put(75,4){\circle*{0.8}}
\put(75,14){\circle*{0.8}}
\put(72,-2){$t=\infty$}
\put(71,3){$0$}
\put(70,13){$\infty$}
\end{picture}
\end{center}
\vspace{0.3cm}

Let $\varphi:\,\bbp^1 \to \bbp^1$ be the cyclic cover of degree $4$
defined by $t=(t')^4$,
totally ramified over $\{0,\infty\}$.
Let $X_1$ be the normalization of the fibre-product $X_0\times_{\bbp^1}\bbp^1$
and $R$ the inverse image of
$$C_0\cup H_{1}\cup H_{-1}\cup H_0\cup H_{\infty}\,.$$
Then $R$ is a double divisor,
i.e.,  we can construct a double cover $S_1 \to X_1$ branched exactly over $R$.
Let $S' \to X_r$ be the canonical resolution,
and $f:\,S \to \bbp^1$ the relatively minimal smooth model as follows.
\begin{center}\mbox{}
\xymatrix{
&S'\ar[d]\ar[ld]\ar[r] &X_r \ar[d] &&\\
S  \ar[dr]_{f} & S_1\ar[r]\ar[d]
& X_1 \ar[rr]^{\Phi}  \ar[dl]^{\tau_2} && X_0 \ar[d]^{\tau_1}  \\
& \bbp^1 \ar[rrr]^{\varphi} &&&\bbp^1
}
\end{center}

By the theory of double covers (cf. \cite[\S\,III.22]{bhpv}),
it is not difficult to show that $f:\,S \to \bbp^1$
is a semi-stable hyperelliptic family of genus $g=3$.
In fact, there are exactly $6$ singular fibres in the family $f$,
i.e., those fibres $\Upsilon$ over $\Delta:=\varphi^{-1}(0\cup1\cup\infty)$.
More precisely, for any fibre $F$ over $\varphi^{-1}(1)$,
$F$ is an irreducible singular elliptic curve with exactly two nodes, hence
$$\xi_0(F)=2,\quad \text{~and~} \quad \delta_1(F)=\xi_1(F)=0;$$
for any fibre $F$ over $\varphi^{-1}(0\cup\infty)$,
$F$ is a chain of three smooth elliptic curves, hence
$$\delta_1(F)=2,\quad \text{~and~}\quad \xi_0(F)=\xi_1(F)=0.$$
So
$$\xi_0(\Upsilon)=8,\qquad \delta_1(\Upsilon)=4,\quad\text{and}\quad \xi_1(\Upsilon)=0.$$
Therefore by \eqref{formulanoether}, \eqref{formulaofdegomega} and \eqref{formulaofdelta_f},
\begin{equation*}
\delta_f=12,\qquad
\deg \big(f_*\Omega^1_{S/\bbp^1}(\log\Upsilon)\big)=2,\qquad
\omega_{S/\bbp^1}^2=12.
\end{equation*}

By definition, those fibres over $\Delta_c:=\varphi^{-1}(0\cup\infty)$ have compact Jacobians,
while those over $\Delta_{nc}:=\varphi^{-1}(1)$ have non-compact Jacobians.
Hence the Jacobian of $f$ admits exactly $\#\left(\Delta_{nc}\right)=4$ singular fibres over $\bbp^1$.
By \cite[\S\,7]{vz04}, the Higgs field of any semi-stable family of abelian varieties over $\bbp^1$
with exactly $4$ singular fibres must be maximal.
i.e., it reaches the Arakelov bound.
Hence $f$ is a Kuga family.

Let $\mathbb F^{1,0}\oplus\mathbb F^{0,1}$ be the flat part of
the logarithmic Higgs bundle associated to the VHS of the Jacobian of $f$ as on Page \pageref{defoftheta}.
Since the base of the family is $\bbp^1$,
$q_f=\rank \mathbb F^{1,0}$.
Hence
$$\deg \big(f_*\Omega^1_{S/\bbp^1}(\log\Upsilon)\big)=
\frac{g-q_f}{2}\cdot\deg \left(\Omega^1_{\bbp^1}(\log\Delta_{nc})\right)=3-q_f,$$
from which it follows that $q_f=1$.

It remains to show that $f$ is in fact a Shimura family.
We present here two ways.

(i).~
Since the base $\bbp^1$ of $f$ is simply connected, by \cite[Theorem 0.2]{vz04},
the Jacobian of $f$ is isogenous over $\bbp^1$ to a product
\begin{equation}\label{exampledec}
E\times_{\bbp^1}\mathcal E\times_{\bbp^1}\mathcal E,
\end{equation}
where $E$ is a constant elliptic curve, and $\mathcal E \to \bbp^1$ is
a family of semi-stable elliptic curves reaching the Arakelov bound.
To show that $f$ is a Shimura family, it suffices to prove that
the constant part $E$ has a complex multiplication.

It is not difficult to see that our family is actually defined by
\begin{equation}\label{exampledefoff}
y^2=\big(1+(4(t')^4-2)x^2+x^4\big)\cdot(x^2-1)\cdot x.
\end{equation}
Let $E_0$ be a constant elliptic curve defined by
$$u^4=v\cdot(v+1)^2.$$
Then it is clear that $E_0$ has complex multiplication by $\mathbb Z\left[\sqrt{-1}\right]$.
Define a morphism from the family $f$ to the constant family $E_0$ by
$$(u,\,v)=\psi(x,\,y)=\left(\frac{\sqrt{2}\cdot t'y}{(x^2-1)^2},\,\frac{4(t')^4x^2}{(x^2-1)^2}\right).$$
It can be checked easily that $\psi$ is well-defined.
Hence the Jacobian of $f$ contains a constant part $E_0$.
Note that the constant part $E$ in the decomposition \eqref{exampledec} is unique up to isogenous,
and the property with a complex multiplication is invariant under isogenous.
Therefore, the constant part $E\sim E_0$ has a complex multiplication, and so $f$ is a Shimura family.

(ii).~
We prove that $f$ is a Shimura family by showing that our family $f$ is actually isomorphic to
a known Shimura family constructed by Moonen and Oort \cite{mo11}.

Let
$$u=\frac{1+x^2}{1-x^2},\qquad v=\frac{2y}{(1-x^2)^2},\qquad  w=\left(\frac{1+x^2}{1-x^2}\right)^2.$$
Then by virtue of \eqref{exampledefoff}, we see that our family is isomorphic to
$$ {\mathcal U_{t'}}:\quad \left\{
\begin{aligned}
u^2&=w,\\
v^4&= \Big(2(t')^4w-2\big((t')^4-1\big)\Big)^2\cdot(w-1).
\end{aligned}\right.$$
Such a family can be viewed as a family of abelian covers of $\bbp^1$ branched exactly over
$4$ points with Galois group $\mathbb Z_2\times \mathbb Z_4$ and local monodromy of
the branched points being $\big((1,0), (1,1), (0,1), (0,2)\big)$.
And it is just the family (22) given in \cite[\S 6,\,{\sc Table\,2}]{mo11}, which is Shimura.
So is $f$.

We remark that by \cite{mo11}, we do not know whether the corresponding Shimura curve
is complete or not (i.e., whether $\Delta_{nc}=\emptyset$ or not).
Our concrete description shows that such a Shimura curve is a non-complete rational Shimura curve.
\end{example}

\begin{example}\label{exshimurag=4}
Shimura curve with strictly maximal Higgs field
contained generically in the Torelli locus of hyperelliptic curves of genus $g=4$.
\vspace{0.15cm}

The construction is similar to Example \ref{exshimurag=3}.

Let $C_0$, $H_{x_0}$ and $X_0$ be the same as those in Example \ref{exshimurag=3}.
Let $\pi:\,B \to \bbp^1$ be a cover of degree $8$,
ramified uniformly over $\{0,1,\infty\}$ with ramification indices equal to $4$.
It is easy to see that such a cover exists, and
$$g(B)=2,\qquad \#(\Delta)=6,$$
where $\Delta=\varphi^{-1}(0\cup1\cup\infty)$.
Let $X_1$ be the normalization of $X_0\times_{\bbp^1}B$
and $R$ the inverse image of
$$C_0\cup H_{1}\cup H_{-1}\cup H_{\sqrt{-1}}\cup H_{-\sqrt{-1}}\cup H_0\cup H_{\infty}\,.$$
Then $R$ is a double divisor,
i.e.,  we can construct a double cover $S_1 \to X_1$ branched exactly over $R$.
Let $f:\,S \to B$ the relatively minimal smooth model as follows.
\begin{center}\mbox{}
\xymatrix{
S  \ar[dr]_{f}\ar@{<-->}[r] & S_1\ar[r]\ar[d]
& X_1 \ar[rr]^{\Phi}  \ar[dl]^{\tau_2} && X_0 \ar[d]^{\tau_1}  \\
& B \ar[rrr]^{\varphi} &&&\bbp^1
}
\end{center}

By \cite[\S\,III.22]{bhpv}, one can show that $f:\,S \to B$
is a semi-stable hyperelliptic family of genus $g=4$
with $6$ singular fibres,
i.e., those fibres $\Upsilon$ over $\Delta$.
More precisely, for any fibre $F\in \Upsilon$,
$F$ consists of two smooth elliptic curves $D_1$, $D_2$, and a smooth curve $\wt D$ of genus\,$2$,
such that $D_1$ does not intersect $D_2$, and $\wt D$ intersects each $D_i$ in one point for $i=1,2$.
Hence
$$\delta_1(F)=2,\quad \text{~and~} \quad \delta_2(F)=\xi_0(F)=\xi_{1}(F)=0.$$
So
$$\delta_1(\Upsilon)=12,\quad \text{~and~} \quad \delta_2(\Upsilon)=\xi_0(\Upsilon)=\xi_{1}(\Upsilon)=0.$$
Therefore by \eqref{formulanoether}, \eqref{formulaofdegomega} and \eqref{formulaofdelta_f},
\begin{equation*}
\delta_f=12,\qquad
\deg \big(f_*\Omega^1_{S/B}(\log\Upsilon)\big)=4,\qquad
\omega_{S/B}^2=36.
\end{equation*}

By definition, any singular fibre of $f$ has a compact Jacobian,
so the Jacobian of $f$ is a smooth family of abelian varieties of dimension $4$.
Let $A^{1,0}\subseteq f_*\Omega^1_{S/B}(\log\Upsilon)$
be the ample part described on Page \pageref{defoftheta}.
Then according to Arakelov inequality, we have
$$4=\deg \big(f_*\Omega^1_{S/B}(\log\Upsilon)\big)
\leq \frac{\rank A^{1,0}}{2}\cdot \deg \Omega_B=\rank A^{1,0}
\leq \rank f_*\Omega^1_{S/B}(\log\Upsilon) =g=4.$$
Hence the Jacobian of $f$ reaches the Arakelov bound with $\rank A^{1,0}=g$,
i.e., the Higgs field associated to $f$ is strictly maximal.
Therefore $f$ is a Shimura family.

We remark that in this example,
$$c_1^2\left(\Omega_S^1(\log D)\right)=3c_2\left(\Omega_S^1(\log D)\right)=72,$$
where $D$ is the union of those $12$ smooth disjoint elliptic curves contained in $\Upsilon$.
Hence $S\setminus D$ is a ball quotient by \cite{kobayashi} or \cite{mok12}.
\end{example}

\vspace{0.3cm}
\noindent{\bf Acknowledgements.}
\phantomsection
\addcontentsline{toc}{section}{Acknowledgements}
We would like to thank Chris Peters,  Guitang Lan and Jinxing Xu
for discussions on the topic related to global invariant
cycles with locally constant coefficients.
Especially, the proof of Lemma \ref{lemmapeters} comes from a discussion with Chris Peters.
We would also like to thank Shengli Tan and Hao Sun for discussing with us on
Miyaoka-Yau's inequality and the slope inequality.
We are grateful to Yanhong Yang for her interests,
careful reading and valuable suggestions.

\phantomsection

\vspace{.3cm}
\phantomsection
\addcontentsline{toc}{section}{Contact}
\noindent
{\it E-mail address}: lvxinwillv@gmail.com\\[.1cm]
{\it E-mail address}: zuok@uni-mainz.de

\vspace{0.5cm}
\noindent {\sc Institut f\"ur Mathematik, Universit\"at Mainz, Mainz, 55099, Germany}
\end{document}